\newtheorem{theorem}{Theorem}[section]
\newtheorem{lemma}[theorem]{Lemma}
\newtheorem{corollary}[theorem]{Corollary}
\newtheorem{proposition}[theorem]{Proposition}
\theoremstyle{definition}
\newtheorem{remark} [theorem] {Remark}
\newcommand{\mc}{\mathcal}
\newcommand{\mb}{\mathbb}
\newcommand{\la}{\lambda}
\newcommand{\norm}[1]{\left\lVert#1\right\rVert}
\newcommand{\pd}[2]{\frac{\partial#1}{\partial#2}}
\newcommand{\R}{\mb{R}}
\newcommand{\e}{\varepsilon}
\newcommand{\Tr}{\mathop{\rm{Tr}}}
\newcommand{\dive}{\mathop{\rm{div}}}
\newcommand{\df}[4]{\ensuremath\sideset{_{#1}}{_{#4}}{\mathop{\left\langle #2, #3 \right\rangle}}}
\newcommand{\ps}[3]{\left( #2, #3 \right)_{#1}}
\begin{document}
\title[On the fractional powers of a Schr\"odinger operator with a Hardy-type potential]
{On the fractional powers of a Schr\"odinger operator with a Hardy-type potential}

\date{02/21/2023}

\author{ Giovanni Siclari}

\address[ G. Siclari]{Dipartimento di Matematica e Applicazioni
\newline\indent
Universit\`a degli Studi di Milano - Bicocca
\newline\indent
Via Cozzi 55, 20125, Milano, Italy}
\email{g.siclari2@campus.unimib.it}

\thanks{
{\it 2020 Mathematics Subject Classification:}
35R11, 
35J75, 
35B40, 
35B60. 
\\
\indent {\it Keywords:} Fractional elliptic equations; unique continuation;
monotonicity formula; Hardy-type potentials.\\
G. Siclari is partially supported by the GNAMPA-INdAM 2022 grant ``Questioni di esistenza e unicit\`a per problemi non locali con potenziali''. 
}

\begin{abstract}
\noindent  
Strong unique	continuation properties and a classification of the asymptotic profiles are established for the fractional powers of a Schr\"odinger operator  with a Hardy-type potential, by  means of an  Almgren monotonicity formula combined with a blow-up analysis.
\end{abstract}

\maketitle

\section{Introduction} \label{sec_introduction}
This paper  deals with  the fractional powers of the operator 
\begin{equation}\label{def_operator_integer}
L_{\alpha,k}u:=-\Delta u -  \frac{\alpha }{|x|_{k}^2}u
\end{equation}
on a connected bounded Lipschitz domain $\Omega \subset \R^N$ with $N\ge 3$ and  $0 \in \Omega$, where

\begin{equation}\label{hp_alpha}
|x|_{k}^2=\sum_{i=1}^k x_i^2 \quad \text{ and }  \quad \alpha \in \left(-\infty,\left(\frac{k-2}{2}\right)^2\right)
\end{equation}
for any $k\in \{3,\dots, N\}$. If $k=N$ we will simply write $|x|$ for $|x|_N$.

The operator  $L_{\alpha,{k}}$ is an elliptic operator with a homogenous potential with a singular set of dimension $N-k$.
In view of Hardy-Maz'ja-type inequalities, see Section \ref{sec_functional_setting_and_main_result}, the operator $L_{\alpha,k}$ has a discrete spectrum on $H^1_0(\Omega)$. Hence the fractional  powers 
$L^s_{\alpha,k}$ of $L_{\alpha,k}$ with $s \in (0,1)$ can be defined in a spectral sense, see for example \cite{ST}.
In the particular case  $\alpha=0$ the operator $L_{\alpha,k}^s$ reduces to the spectral fractional Laplacian $(-\Delta)^s$ which has been intensely studied in the literature, 
see for example \cite{AD,LAPGGM} and the references within. 

We will give a more precise  definition of $L^s_{\alpha,k}$ in Section \ref{sec_functional_setting_and_main_result} 
since, to the best of the author's knowledge,  the operator  $L^s_{\alpha,k}$ has not been considered before in the literature with  $\alpha \neq 0$ in a bounded domain.  
In the whole space $\R^N$ the fractional powers of $L_{\alpha,N}$ have already been defined by the means of spectral theory, see  \cite{FHhardy}. In \cite{FHhardy} 
generalised and reversed Hardy types inequalities  have been obtained for $L_{\alpha,N}^s$ using semigroup theory and estimates on the corresponding heat kernel.

We will focus on the validity of a  unique continuation principle from the singular point $0$ for solutions of linear  equations involving  the operator $L_{\alpha,k}^s$.
We  are interested in the equation 
\begin{equation}\label{eq_fractional_hardy}
	L_{\alpha,k}^su=gu \quad \text{ in }\Omega
\end{equation}
where the potential $g$ satisfies
\begin{equation}\label{hp_g}
	\begin{cases}
		g \in W^{1,\infty}_{loc}(\Omega \setminus\{0\}), \\
		|g(x)|+ |x \cdot \nabla g (x)| \le C_g |x|^{-2s+\e},  \quad \text{ for a.e. } x \in \Omega,
	\end{cases}
\end{equation}
for some positive constant $C_g>0$ and $\e \in (0,1)$.
We will classify  the asymptotic profiles in $0$ of  solutions    of \eqref{eq_fractional_hardy} in a suitable 
weak sense, and obtain a strong unique continuation property from $0$, see Theorem \ref{theorem_blow_up_not_precise}, Theorem  \ref{theorem_blow_up_precise}, and 
Corollary \ref{corollary_unique_continuation} for a precise statements of our results. In particular we will prove that  the asymptotic profile of $u$ in $0$ is an homogenous function. We will also characterize the possible orders of homogeneity, which have a non-trivial dependence on the singular potential $\alpha |x|_k^{-2}$,  see Theorem \ref{theorem_blow_up_not_precise}.

For the  restricted  fractional Laplacian with a Hardy-type potential, under similar assumptions on the potential $g$  and with a non-linear term, a  
complete classification of the possible asymptotic profiles and a unique continuation property from $0$ have been obtained in \cite{FF}. 
The asymptotic behaviour of the spectral fractional Laplacian with a Hardy-type potential is identical  since the equivalent problem obtained with a Caffarelli-Silvestre  extension procedure is the same locally.
The  restricted  fractional Laplacian with a Hardy-type 
potential has been intensively  studied in the literature, see for example \cite{F,BKFTJTPD,AIP,FW,DFO} and the references within.

If $k=N$, it is interesting   to compare our results with \cite{FF}, in particular the minimum  order of homogeneity of the asymptotics profiles, see \eqref{eq_first_eigenvlaue_k=N}, Theorem 
\ref{theorem_blow_up_not_precise} and \cite[Proposition 2.3]{FF}. In  our cases is possible to compute it explicitly, while for the restricted fractional Laplacian only a more implicit expression is 
available.   

Similar results in the classical case, that is $s=1$, in the much more general situation of multiple potentials, including cylindrical and multi-body ones, and with the presence of a non linear term, 
have been obtained in \cite{FFT}. Furthermore in \cite{FFT} the authors also studied  regularity properties of the solutions by   means of a  Brezis-Kato argument and obtained pointwise estimates.

To study   unique continuation properties from $0$ for solutions of \eqref{eq_fractional_hardy} we  start by defining  a precise functional setting for \eqref{eq_fractional_hardy} by  means of Interpolation Theory. Furthermore our approach is based on an Almgren type monotonicity formula combined with a blow-up argument. 
Since this approach is local in nature, we need a suitable extension result to localise the problem, see Theorem \ref{theorem_extension} and also   \cite{CDDS,CS,ST}.
We will also need a Pohozaev type identity to develop  a monotonicity formula.
The singularity of the Hardy type potential $\alpha |x|_k^{-2}$, the assumptions   \eqref{hp_g} on $g$ and the singularity or degeneracy of the Muckenhoupt weight $y^{1-2s}$ in the 
hyperplane $\R^n \times\{0\}$ cause an eventual lack of regularity for solutions to the extended problem.
We  overcame this issue by means  of an approximation procedure based on the Implicit Function Theorem and  the ideas contained in  \cite{FSlincei}.

The paper is organized as follows. In Section \ref{sec_functional_setting_and_main_result} we provide the precise functional setting for  \eqref{eq_fractional_hardy} and state our main results. In 
Section \ref{sec_inequalities_and_extension} we prove the extension Theorem  \ref{theorem_extension}, study an eigenvalue problem on a hemisphere, which will turn out to be correlated to the asymptotic 
profiles of weak solutions of \eqref{eq_fractional_hardy}, and discuss some useful inequalities. In Section \ref{sec_Pohozaev_identity}  we  prove a Pohozaev type identity. In Section 
\ref{sec_monotonicity} we  develop a monotonicity formula for the extend problem while in Section \ref{sec_the_blow_up_analysis} we carry out the blow-up argument and prove our main results. 
Finally in Section  \ref{sec_computation_eigenvalue}  we  compute the first eigenvalue of the problem studied in \ref{sec_inequalities_and_extension} while in Appendix \ref{sec_appendix_A} we provide some further details on the functional setting for   equation \eqref{eq_fractional_hardy}  which will be introduced in Section \ref{sec_functional_setting_and_main_result}.

\section{Functional Setting and Main Results} \label{sec_functional_setting_and_main_result}
Since we deal with singular potentials of the form $\alpha|x|_{k}^{-2}$, Hardy-type inequalities with optimal constants  are fundamental to study the positivity of $L_{\alpha,k}$ on $H^1_0(\Omega)$. In the case $k=N$ it is well known that 
\begin{equation}\label{ineq_hardy_N}
\int_{\R^N}\frac{\phi^2}{|x|^2}\, dx \le \left(\frac{2}{N-2}\right)^2\int_{\R^N} |\nabla \phi|^2 \, dx \quad \text { for any } \phi \in C^{\infty}_c(\R^N),
\end{equation}
and that $\left(\dfrac{2}{N-2}\right)^2$ is the optimal constant.
A similar result also holds for cylindrical potential, more precisely for   any $k \in \{3,\dots,N\}$
\begin{equation}\label{ineq_hardy_k}
\int_{\R^N}\frac{\phi^2}{|x|_k^2}\, dx \le \left(\frac{2}{k-2}\right)^2\int_{\R^N} |\nabla \phi^2| \, dx \quad \text { for any } \phi \in C^{\infty}_c(\R^N),
\end{equation}
see \cite[ Subsection 2.1.6, Corollary 3]{M} or \cite{BT}. Furthermore   $\left(\frac{2}{k-2}\right)^2$ is the optimal constant as conjectured in \cite{BT} and proved in  \cite{SSW}.

Let us consider the eigenvalue problem 
\begin{equation}\label{prob_eigenvalue_omega}
\begin{cases}
L_{\alpha,k}u=\mu u, &\text{ in } \Omega,\\
u=0, &\text{ on } \partial \Omega.
\end{cases} 
\end{equation}
We say that $\mu$ is an eigenvalue of \eqref{prob_eigenvalue_omega} if there exists $Y \in H_0^1(\Omega) \setminus\{0\}$ such that 
\begin{equation}\label{eq_eigenvalue_omega}
\int_{\Omega}\nabla Y \cdot \nabla v \, dx -\int_{\Omega}\frac{\alpha }{|x|_{k}^2}Yv \, dx= \mu \int_{\Omega} Y v \, dx, \quad \text{ for any } v \in H^1_0(\Omega). 
\end{equation}
Thanks to \eqref{hp_alpha} and \eqref{ineq_hardy_k}, for any $k \in \{3,\cdots, N\}$ the energy functional
\begin{equation}\label{def_J}
J_{\alpha,k}(u):=\int_{\Omega}|\nabla u|^2 \, dx -\int_{\Omega}\frac{\alpha }{|x|_{k}^2}u^2 \, dx
\end{equation}
is coercive on $H^1_0(\Omega)$ and so by the Spectral Theorem  the set of the  eigenvalues of \eqref{prob_eigenvalue_omega} is a non-decreasing, positive, diverging  sequence  
$\{\mu_{\alpha,k,n}\}_{n\in \mb{N}\setminus \{0\}}$(we repeat  each eigenvalue according to its multiplicity). Furthermore there exists an orthonormal basis $ \{Y_{\alpha,k,n}\}_{n\in \mb{N}\setminus 
\{0\}}$ of $L^2(\Omega)$ made of corresponding eigenfunctions. Since the first eigenfunction does not change sign, it is not restrictive to suppose that $Y_{\alpha,k,1}$ is positive.

For any Hilbert space $X$ let  $(v_1,v_2)_{X}$ be the scalar product on $X$. Furthermore let
\begin{equation}\label{def_vn}
v_n:=(v,Y_{\alpha,k,n})_{L^2(\Omega)} \quad  \text{ for any } v \in L^2(\Omega).
\end{equation}

\begin{remark}\label{remark_equivalence_norm}
In view of \eqref{ineq_hardy_k}, $\norm{v}_{\alpha,k}:=\left(J_{\alpha,k}(v)\right)^\frac{1}{2}$ is a norm on $H^1_0(\Omega)$ equivalent to the usual norm 
$\norm{v}_{H^1_0(\Omega)}:=\left(\int_{\Omega}|\nabla v|^2 \, dx\right)^\frac{1}{2}$. 
The scalar product associated to $\norm{\cdot}_{\alpha,k}$ is given by 
\begin{equation}\label{def_scalar_product_alphak}
(v,w)_{\alpha,k}:=\int_{\Omega} \nabla v  \cdot\nabla w-\frac{\alpha}{|x|^2_k}v w\, dx.
\end{equation}
By \eqref{eq_eigenvalue_omega}, $\{Y_{\alpha,k,n}/\sqrt{\mu_{\alpha,k,n}}\}_{n\in \mb{N}\setminus \{0\}}$ is an orthonormal basis of  $H^1_0(\Omega)$ with respect to the norm $\norm{\cdot}_{\alpha,k}$ and for any $ v,w\in H^{1}_0(\Omega)$
\begin{equation}\label{scalar_product_alphak_as_series}
(v,w)_{\alpha,k}=\sum_{n=1}^\infty\mu_{\alpha,k,n} v_n w_n,
\end{equation}
where  $v_n$ and $w_n$ are as in \eqref{def_vn}.	
\end{remark}

Let  us consider the functional space
\begin{equation*}
\mb{H}_{\alpha,k}^s(\Omega):=\left\{v \in L^2(\Omega):	\sum_{n=1}^\infty\mu^s_{\alpha,k,n}v_n^2	<+\infty\right\}
\end{equation*}
which is  a Hilbert space with respect to the scalar product
\begin{equation}\label{fract-lapla-domain-scalar-product}
(v,w)_{\mb{H}_{\alpha,k}^s(\Omega)}:=\sum_{n=1}^\infty\mu^s_{\alpha,k,n}	v_nw_n,	\quad \text{ for any } v,w \in \mb{H}_{\alpha,k}^s(\Omega).
\end{equation}
For any $j \in \mb{N}\setminus\{0\}$, and $v \in L^2(\Omega)$ it is clear that  $\sum_{n=1}^j\mu^s_{\alpha,k,n} v_n Y_{\alpha,k,n} \in L^2(\Omega)$ and that it can be identified with the  element of 
the dual space $(\mb{H}_{\alpha,k}^s(\Omega))^*$ acting on $u \in\mb{H}_{\alpha,k}^s(\Omega)$ as 
\begin{equation*}
\df{(\mb{H}_{\alpha,k}^s(\Omega))^*}{\sum_{n=1}^j\mu^s_{\alpha,k,n} v_n Y_{\alpha,k,n}}{u}{\mb{H}_{\alpha,k}^s(\Omega)}:=\left(\sum_{n=1}^j\mu^s_{\alpha,k,n} v_n Y_{\alpha,k,n},u\right)_{L^2(\Omega)}
=\sum_{n=1}^j\mu^s_{\alpha,k,n} v_n u_n.
\end{equation*}
It is easy to see  that, if $v\in \mb{H}_{\alpha,k}^s(\Omega)$, then the series	$\sum_{n=1}^\infty\mu^s_{\alpha,k,n} v_n Y_{\alpha,k,n}$
converges in the dual space $(\mb{H}_{\alpha,k}^s(\Omega))^*$ to some $F\in (\mb{H}_{\alpha,k}^s(\Omega))^*$ such that
\begin{equation*}
\df{(\mb{H}_{\alpha,k}^s(\Omega))^*}{F}{Y_{\alpha,k,n}}{\mb{H}_{\alpha,k}^s(\Omega)}=\mu^s_{\alpha,k,n}
v_n \quad \text{ for any } n \in \mb{N}\setminus\{0\}.		
\end{equation*} 
It follows that, for every 	$v\in \mb{H}_{\alpha,k}^s(\Omega)$, we can define the fractional $s$-power of the operator $L_{\alpha,k}$ as  
\begin{equation}\label{def_fractional_L}
L_{\alpha,k}^s v := \sum_{n=1}^\infty\mu^s_{\alpha,k,n}v_nY_{\alpha,k,n} \in (\mb{H}_{\alpha,k}^s(\Omega))^*.
\end{equation}
More precisely, the operator $L_{\alpha,k}^s $ is the Rietz isomorphism between $\mb{H}_{\alpha,k}^s(\Omega)$ endowed with the scalar product \eqref{fract-lapla-domain-scalar-product} and its dual 
space $(\mb{H}_{\alpha,k}^s(\Omega))^*$, that is 
\begin{equation}\label{property_fractional_L_Rietz}
\df{(\mb{H}_{\alpha,k}^s(\Omega))^*}{L_{\alpha,k}^s v_1}{v_2}{\mb{H}_{\alpha,k}^s(\Omega)}=
(v_1,v_2)_{\mb{H}_{\alpha,k}^s(\Omega)}	\quad \text{ for all } v_1,v_2 \in\mb{H}_{\alpha,k}^s(\Omega).
\end{equation}
A similar definition for the spectral fractional Laplacian, that is the operator $L_{0,N}$, was given in \cite{CDDS} and in  \cite{DFSboundary}.

We would like to characterize the space $\mb{H}_{\alpha,k}^s(\Omega)$ more explicitly. To this end, let $H^s(\Omega)$ be the usual fractional Sobolev space $W^{s,2}(\Omega)$, $H_0^s(\Omega)$ the closure of $C^\infty_c(\Omega)$ in $H^s(\Omega)$, and let 
\begin{equation*}
H_{00}^{1/2}(\Omega):=\left\{u \in H_0^{\frac12}(\Omega):\int_{\Omega}\frac{u^2(x)}{d(x,\partial \Omega)}\, dx <+\infty\right\},	
\end{equation*}
endowed with the norm 
\begin{equation}\label{def_norm_H00}
\norm{v}_{H_{00}^{1/2}(\Omega)}:= \norm{v}_{H^{1/2}(\Omega)}+\left(\int_{\Omega}\frac{v^2(x)}{d(x,\partial \Omega)}\, dx\right)^\frac{1}{2},
\end{equation} 
where $d(x,\partial \Omega):=\inf\{|x-y|:y \in \partial \Omega\}$.
For any $s \in (0,1)$ let 
\begin{equation}\label{def_Hs}
\mb{H}^s(\Omega):=
\begin{cases}
H^s_0(\Omega), &\text{if } s \in(0,1)\setminus\{\frac{1}{2}\}, \\
H_{00}^{1/2}(\Omega), &\text{if } s =\frac{1}{2}.
\end{cases}	 
\end{equation}
We also note  that $H^s(\Omega)=H^s_0(\Omega)$ if and only if $s \in (0,\frac{1}{2}]$, see  \cite[Theorem 11.1]{LM1}. 
In Appendix \ref{sec_appendix_A} we will prove the following Proposition by means of Interpolation Theory.
\begin{proposition}\label{prop_Halphak_Hs}
For any $k \in \{3,\dots,N\}$, $s \in (0,1)$ and $\alpha$ as in \eqref{hp_alpha} 
\begin{equation}\label{eq_Halphak_Hs}
\mb{H}_{\alpha,k}^s(\Omega)=(L^2(\Omega),H^1_0(\Omega))_{s,2}=\mb{H}^s(\Omega),
\end{equation}
with equivalent norms.
\end{proposition}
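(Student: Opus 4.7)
The plan is to establish the two equalities in \eqref{eq_Halphak_Hs} separately: the first one identifies the spectrally defined space $\mb{H}_{\alpha,k}^s(\Omega)$ with a real interpolation space via the self-adjoint operator $L_{\alpha,k}$, while the second is a classical description, due to Lions and Magenes, of real interpolation spaces between $L^2(\Omega)$ and $H^1_0(\Omega)$ on a bounded Lipschitz domain.

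For the first equality, I would start from the observation, essentially contained in Remark \ref{remark_equivalence_norm}, that $L_{\alpha,k}$ extends to a positive self-adjoint operator on $L^2(\Omega)$ whose quadratic form domain is $H^1_0(\Omega)$, with form norm $\norm{\cdot}_{\alpha,k}$ equivalent to the standard $\norm{\cdot}_{H^1_0(\Omega)}$. In particular $D(L_{\alpha,k}^{1/2})=H^1_0(\Omega)$ with equivalent norms, and $\{Y_{\alpha,k,n}/\sqrt{\mu_{\alpha,k,n}}\}$ is the associated orthonormal eigenbasis. The plan is then to apply the general spectral identification
\begin{equation*}
\big(L^2(\Omega),D(L_{\alpha,k}^{1/2})\big)_{s,2}=D(L_{\alpha,k}^{s/2}),
\end{equation*}
with equivalent norms. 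This is a standard consequence of the K-method: the spectral decomposition yields, up to universal constants,
\begin{equation*}
K(t,v)^2\approx \sum_{n=1}^\infty \frac{t^2\mu_{\alpha,k,n}}{1+t^2\mu_{\alpha,k,n}}v_n^2,
\end{equation*}
and integration against $t^{-2s-1}\,dt$ produces $c_s\sum_n \mu_{\alpha,k,n}^s v_n^2$, which is exactly the squared norm of $\mb{H}_{\alpha,k}^s(\Omega)$. The equivalence of the two norms on $H^1_0(\Omega)$ ensures that the interpolation space is unchanged when $H^1_0(\Omega)$ is equipped with its standard norm, giving the first equality.

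For the second equality, I would invoke the classical Lions-Magenes characterization (see e.g.\ \cite[Chapter 1, Theorems 11.6 and 11.7]{LM1}), which on a bounded Lipschitz domain gives
\begin{equation*}
(L^2(\Omega),H^1_0(\Omega))_{s,2}=
\begin{cases}
H^s_0(\Omega), & s\in(0,1)\setminus\{1/2\},\\
H^{1/2}_{00}(\Omega), & s=1/2,
\end{cases}
\end{equation*}
which is exactly $\mb{H}^s(\Omega)$ as introduced in \eqref{def_Hs}. The delicate case is $s=1/2$: the additional Hardy-type integrability against $d(x,\partial\Omega)^{-1}$ that distinguishes $H^{1/2}_{00}(\Omega)$ from $H^{1/2}_0(\Omega)$ arises from the behaviour of the K-functional precisely at this critical index and cannot be dropped.

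The principal technical step is the spectral K-functional computation of step one; verifying both directions of the equivalence requires choosing explicit decompositions $v=v_0+v_1$ by spectral truncation and applying Parseval's identity simultaneously in the $L^2$ and form topologies. Once this identity is in place, the proposition follows by combining it with the Lions-Magenes characterization.
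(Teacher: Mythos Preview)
Your proposal is correct and follows essentially the same approach as the paper: the spectral $K$-functional computation you outline (optimising component-wise to get $K(t,v)^2\approx\sum_n \frac{t^2\mu_{\alpha,k,n}}{1+t^2\mu_{\alpha,k,n}}v_n^2$ and integrating against $t^{-2s-1}\,dt$) is exactly what the paper carries out in its preliminary Lemma~\ref{lemma_interpolation_l2_weighted}, only stated there more abstractly for two arbitrary weighted $\ell^2$ spaces, and the final identification with $\mb{H}^s(\Omega)$ is the same Lions--Magenes result. The one step the paper makes more explicit is the verification that $\mb{H}^1_{\alpha,k}(\Omega)=H^1_0(\Omega)$ with equivalent norms, which you correctly subsume into the form-domain identification via Remark~\ref{remark_equivalence_norm}.
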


Let for any measurable function $v:\Omega \to \R$,
\begin{equation}\label{prop_ineq_hardy_Hs:1}
\tilde{v}(x):=
\begin{cases}
v(x), &\text{ if } x \in \Omega, \\
0, &\text{ if } x \in \R^N \setminus \Omega.
\end{cases}
\end{equation}
Then from  \cite[Proposition B.1]{BLP} in the case $s\neq \frac{1}{2}$ and from the proof of \cite[Proposition B.1]{BLP} and \eqref{def_norm_H00} if $s=\frac12$ we deduce the following result.
\begin{proposition} \label{prop_extension_to_0}
There exists a constant $C_{N,s,\Omega}$ such that   
\begin{equation}\label{ineq_extension_to_0}
\norm{\tilde{v}}_{H^s(\R^n)}\le C_{N,s,\Omega}\norm{v}_{\mb{H}^s(\Omega)}
\end{equation}
for any $v \in \mb{H}^s(\Omega)$.
\end{proposition}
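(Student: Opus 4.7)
The statement is quoted from \cite[Proposition B.1]{BLP} when $s\neq\tfrac12$, so for those exponents there is essentially nothing to do beyond citing the reference; the actual work is the $s=\tfrac12$ case, where one has to explain how the extra integral appearing in the $H^{1/2}_{00}$ norm \eqref{def_norm_H00} compensates the obstruction to extending by zero. In both cases I would first reduce to test functions: for any $s\in(0,1)$, $C^\infty_c(\Omega)$ is dense in $\mathbb{H}^s(\Omega)$ (density in $H^{1/2}_{00}(\Omega)$ being classical), so it suffices to prove \eqref{ineq_extension_to_0} for $v\in C^\infty_c(\Omega)$ and pass to the limit.

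For $s\in(0,1)\setminus\{\tfrac12\}$, \cite[Proposition B.1]{BLP} applies verbatim. Its proof proceeds by a direct computation of the Gagliardo seminorm of $\tilde v$: one splits $\R^N\times\R^N$ as $(\Omega\times\Omega)\cup(\Omega\times\Omega^c)\cup(\Omega^c\times\Omega)\cup(\Omega^c\times\Omega^c)$, observes that the first piece gives $[v]_{H^s(\Omega)}^2$ and the last piece is zero, and for $s<\tfrac12$ controls the mixed terms by the $L^2$ norm alone, while for $s>\tfrac12$ one uses the vanishing trace of $v\in H^s_0(\Omega)$ together with a Hardy-type inequality near $\partial\Omega$.

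For $s=\tfrac12$ I would redo exactly this splitting, which yields
\begin{equation*}
[\tilde v]_{H^{1/2}(\R^N)}^2 = [v]_{H^{1/2}(\Omega)}^2 + 2\int_{\Omega} v(x)^2 \left(\int_{\R^N\setminus\Omega}\frac{dy}{|x-y|^{N+1}}\right) dx.
\end{equation*}
The mixed term is the critical one. Using that $\Omega$ is Lipschitz, hence enjoys a uniform exterior cone condition, a standard geometric argument gives the pointwise bound
\begin{equation*}
\int_{\R^N\setminus\Omega}\frac{dy}{|x-y|^{N+1}} \le \frac{C_\Omega}{d(x,\partial\Omega)}\quad\text{for all } x\in\Omega,
\end{equation*}
with $C_\Omega$ depending only on the Lipschitz constants of $\partial\Omega$. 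Substituting, the mixed term is bounded by $2C_\Omega\int_\Omega v^2(x)/d(x,\partial\Omega)\,dx$, which is precisely (the square of) the extra summand in the $H^{1/2}_{00}(\Omega)$ norm defined in \eqref{def_norm_H00}. Combining with the trivial identity $\|\tilde v\|_{L^2(\R^N)}=\|v\|_{L^2(\Omega)}\le \|v\|_{H^{1/2}(\Omega)}$ and density, one obtains \eqref{ineq_extension_to_0} with $s=\tfrac12$.

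The main technical obstacle is the pointwise cone-type estimate on $\int_{\R^N\setminus\Omega}|x-y|^{-(N+1)}\,dy$: it is the only step where the Lipschitz regularity of $\partial\Omega$ is really used, and it is also the step that explains why the weight $d(x,\partial\Omega)^{-1}$ appearing in the definition of $H^{1/2}_{00}(\Omega)$ is exactly the right one to make the zero-extension continuous in the critical exponent.
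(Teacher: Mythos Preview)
Your proposal is correct and follows exactly the route indicated by the paper. The paper does not actually give a proof but simply states that the result follows from \cite[Proposition~B.1]{BLP} when $s\neq\tfrac12$, and from the \emph{proof} of \cite[Proposition~B.1]{BLP} together with the definition \eqref{def_norm_H00} when $s=\tfrac12$; your write-up spells out precisely what that means---the Gagliardo splitting, the cone-type pointwise bound $\int_{\R^N\setminus\Omega}|x-y|^{-N-1}\,dy\le C_\Omega\,d(x,\partial\Omega)^{-1}$, and the observation that the resulting weighted integral is exactly the extra term in the $H^{1/2}_{00}$ norm.
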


\begin{proposition} \label{prop_ineq_hardy_Hs}
There exists a constant $K_{N,s,\Omega}$ such that for any $v \in \mb{H}^s(\Omega)$ 
\begin{equation}\label{ineq_hardy_Hs}
\int_{\Omega} \frac{v^2(x)}{|x|^{2s}} \, dx \le K_{N,s,\Omega}\norm{v}_{\mb{H}^s(\Omega)}^2.
\end{equation}
\end{proposition}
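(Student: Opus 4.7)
The plan is to reduce the claim to the fractional Hardy inequality on the whole space $\R^N$ via the zero extension provided by Proposition \ref{prop_extension_to_0}.

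First, given $v \in \mb{H}^s(\Omega)$, I would consider its zero extension $\tilde v$ defined in \eqref{prop_ineq_hardy_Hs:1}. By Proposition \ref{prop_extension_to_0}, $\tilde v \in H^s(\R^N)$ and
\begin{equation*}
\norm{\tilde v}_{H^s(\R^N)} \le C_{N,s,\Omega}\norm{v}_{\mb{H}^s(\Omega)}.
\end{equation*}
Since $\tilde v \equiv v$ on $\Omega$ and $\tilde v \equiv 0$ outside, the integral on the left-hand side of \eqref{ineq_hardy_Hs} coincides with the integral of $\tilde v^2/|x|^{2s}$ over all of $\R^N$.

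Second, I would invoke the classical fractional Hardy inequality on $\R^N$ (Herbst's inequality, see e.g.\ Frank--Seiringer): for $s \in (0,1)$ with $2s < N$, there exists a constant $C_{N,s}>0$ such that
\begin{equation*}
\int_{\R^N} \frac{u^2(x)}{|x|^{2s}}\, dx \le C_{N,s}\, [u]_{H^s(\R^N)}^2 \le C_{N,s}\, \norm{u}_{H^s(\R^N)}^2
\end{equation*}
for every $u \in H^s(\R^N)$, where $[\,\cdot\,]_{H^s(\R^N)}$ denotes the Gagliardo seminorm. The condition $2s<N$ is automatically satisfied since $s<1$ and $N\ge 3$. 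Applying this to $u=\tilde v$ and combining with the extension estimate gives the desired bound with $K_{N,s,\Omega}:=C_{N,s}\,C_{N,s,\Omega}^2$.

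The whole argument is therefore a two-line reduction once the extension result is available; the genuine work has already been done in Proposition \ref{prop_extension_to_0}, whose proof is the delicate part (in particular the borderline case $s=\frac12$, where the usual zero extension fails in $H^{1/2}(\R^N)$ unless one controls the $L^2$-integral of $v^2/d(x,\partial\Omega)$, which is exactly what the $H^{1/2}_{00}$ norm in \eqref{def_norm_H00} encodes). I do not anticipate any further obstacles here, since the fractional Hardy inequality on $\R^N$ is well established in the range $2s<N$ we are considering.
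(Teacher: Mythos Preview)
Your proposal is correct and follows exactly the paper's argument: extend by zero to $\R^N$ via Proposition~\ref{prop_extension_to_0} and then apply Herbst's fractional Hardy inequality on $\R^N$. The only cosmetic difference is that the paper states Herbst's inequality in Fourier form \eqref{ineq_hardy-frac-Herbst} rather than via the Gagliardo seminorm, which is immaterial.
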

\begin{proof}
The  following Hardy-type inequality due to Herbst \cite{H} 
\begin{equation}\label{ineq_hardy-frac-Herbst}
2^{2s}\frac{\Gamma^2\left(\frac{N+2s}{4}\right)}{\Gamma^2\left(\frac{N-2s}{4}\right)}\int_{\R^{N}} \frac{v^2(x)}{|x|^{2s}} \, dx \le \int_{\R^N} |\xi|^{2s} |\hat{u}(\xi)|^2 d \xi,
\end{equation}
where $\hat{u}$ is the Fourier transform of $u$, holds for any $v \in H^s(\R^N)$. Then  \eqref{ineq_hardy_Hs} follows from \eqref{ineq_extension_to_0}.
\end{proof}

By Proposition \ref{prop_Halphak_Hs}, we can define a weak solution  to \eqref{eq_fractional_hardy} as a function $u \in \mb{H}^s(\Omega)$ such that  
\begin{equation}\label{eq_weak_formulation_hardy_operator}
\df{(\mb{H}_{\alpha,k}^s(\Omega))^*}{L_{\alpha,k}^s u}{\phi}{\mb{H}_{\alpha,k}^s(\Omega)}=\int_{\Omega} gu  \phi \, dx, \quad \text{ for any } \phi \in C^{\infty}_c(\Omega).
\end{equation}
Thanks to \eqref{hp_g}, \eqref{ineq_hardy_Hs} and the H\"older inequality, the right hand side of \eqref{eq_weak_formulation_hardy_operator} is well defined, that is it belongs to $(\mb{H}^s(\Omega))^*$ as a 
linear functional of $\phi$.

Given the local nature of the Almgren monotonicity formula we need to localize the problem by means of an extension procedure 
in the spirt of \cite{CDDS} or \cite{CS}, see also \cite[Section 3.1]{ST}.
Let us set some notation first. Let  $z=(x,y) \in \R^N \times[0,+\infty)$ be the total variable in $\R^{N+1}_+:= \R^N \times[0,+\infty)$
and let 
\begin{equation}\label{def_C}
C:= \Omega \times (0,+\infty), \quad \partial C_L:= \partial \Omega \times (0,+\infty).
\end{equation}
For any open set $E \subseteq \R^{N+1}_+$  and any  $\phi\in C^\infty(\overline {E})$ we define
\begin{equation}\label{def_norm_H1t}
\norm{\phi}_{H^1(E,y^{1-2s})}:=\left(\int_{E} y^{1-2s}(\phi^2+|\nabla \phi|^2)\, dz\right)^{\frac{1}{2}} 
\end{equation}	
and $H^1(E,y^{1-2s})$ as the completion  of $C^\infty(\overline{E})$ with respect to the norm defined in \eqref{def_norm_H1t}. 
Thanks to  \cite[Theorem 11.11, Theorem 11.2, 11.12 Remarks (iii)]{K}, for any Lipschitz subset $E$ of  $\R^{N+1}_+$, the space $H^1(E,y^{1-2s})$ can be explicitly characterized as  
\begin{equation}\label{def_space_H1t}
H^1(E,y^{1-2s})=\left\{V \in W^{1,1}_{\rm loc}(E):\int_{E} y^{1-2s} (V^2+|\nabla V|^2)\, dz< +\infty\right\}.
\end{equation}

\begin{proposition}\label{prop_ineq_hardy_extened}
For any $\phi \in C^\infty_c(\R^{N} \times [0,+\infty) )$ and any $k \in \{3,\dots,N\}$ 
\begin{equation}\label{ineq_hardy_extened}
\int_{\R^{N+1}_+ } y^{1-2s} \frac{\phi^2}{|x|^2_k} dz \le \left(\frac{2}{k-2}\right)^2 \int_{\R^{N+1}_+ } y^{1-2s}|\nabla_x \phi|^2 \, dz,
\end{equation}
where $\nabla_x$ is the gradient respect to the first $N$ variables.
\end{proposition}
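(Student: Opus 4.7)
The inequality should follow by slicing: for each fixed height $y$, the inequality \eqref{ineq_hardy_k} already gives a one-dimensional version, and then one integrates in $y$ against the weight.

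\smallskip

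My plan is as follows. First, fix any $\phi \in C^\infty_c(\R^N \times [0,+\infty))$. Since $\phi$ has compact support in $\R^N\times[0,+\infty)$, there exists $R>0$ such that the support of $\phi$ is contained in $\overline{B_R^N}\times[0,R]$, where $B_R^N$ is the ball of radius $R$ in $\R^N$. In particular, for every fixed $y\in[0,+\infty)$, the partial function $\phi(\cdot,y)\colon \R^N\to\R$ is smooth and compactly supported, so $\phi(\cdot,y)\in C^\infty_c(\R^N)$.

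Next, I would apply the sharp cylindrical Hardy inequality \eqref{ineq_hardy_k} slice by slice: for each $y\in(0,+\infty)$,
\begin{equation*}
\int_{\R^N}\frac{\phi^2(x,y)}{|x|_k^2}\,dx\le\left(\frac{2}{k-2}\right)^2\int_{\R^N}|\nabla_x\phi(x,y)|^2\,dx.
\end{equation*}
Multiplying both sides by $y^{1-2s}$, which is nonnegative on $(0,+\infty)$, and integrating in $y$ over $(0,+\infty)$, the desired inequality \eqref{ineq_hardy_extened} is obtained after swapping the order of integration. Since all integrands are nonnegative and measurable, Tonelli's theorem legitimately justifies the interchange, and the fact that the support of $\phi$ meets $\{y=0\}$ causes no issue because $\{y=0\}$ has zero measure in $\R^{N+1}_+$.

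There is essentially no obstacle here: the only minor point to check is that \eqref{ineq_hardy_k} applies to the slice $\phi(\cdot,y)$, which is immediate from the product structure of the support. A remark worth including is that the sharp constant $(2/(k-2))^2$ in \eqref{ineq_hardy_k} transfers unchanged to the weighted setting, since it is obtained by testing the slice inequality individually; one should not expect this constant to be sharp for \eqref{ineq_hardy_extened} in general, but sharpness is not asserted in the statement and hence need not be addressed.
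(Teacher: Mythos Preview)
Your proposal is correct and follows essentially the same approach as the paper: apply the slice inequality \eqref{ineq_hardy_k} to $\phi(\cdot,y)\in C^\infty_c(\R^N)$ for each fixed $y$, multiply by $y^{1-2s}$, and integrate in $y$. Your write-up is in fact more detailed than the paper's, explicitly invoking Tonelli's theorem and noting that the set $\{y=0\}$ is null.
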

\begin{proof}
Let  $\phi \in C^\infty_c(\Omega \times [0,+\infty) )$ and $k \in \{3,\dots,N\}$.
Then  $\phi(\cdot,y)\in C_c^{\infty}(\Omega)$ for any $ y \in [0,\infty)$ and so multiplying by $y^{1-2s}$ and integrating over $(0,\infty)$ we deduce   \eqref{ineq_hardy_extened} from \eqref{ineq_hardy_k}.
\end{proof}

Let
\begin{equation}\label{def_H10L}
H^1_{0,L}(C,y^{1-2s}):=\left\{V \in H^1(C,y^{1-2s}): V=0 \text{ on } \partial C_L\right\}.
\end{equation}
The condition $V=0$ on $\partial C_L$ is meant in a classical trace sense. Indeed    the weight $y^{1-2s}$ is smooth, bounded and strictly positive on $\Omega \times [y_1,y_2]$ for any $0<y_1<y_2<+\infty$, and so we can use classical trace theory for the space $H^1(\Omega \times (y_1,y_2))$ for any $0<y_1<y_2<+\infty$.

From  \cite[Proposition 2.1]{CDDS} and \cite[Proposition 2.1, Lemma 2.6]{CT} we deduce the following result.
\begin{proposition} \label{prop_trace} 
There exists  a linear and continuous trace operator 
\begin{equation}\label{def_Tr_omega}
\Tr:H^1_{0,L}(C,y^{1-2s}) \to \mb{H}^s(\Omega)
\end{equation}
which is also surjective.
\end{proposition}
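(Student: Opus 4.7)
The plan is to combine the standard trace theory for Sobolev spaces with the Muckenhoupt $A_2$ weight $y^{1-2s}$ on the half-space $\R^{N+1}_+$ with the zero lateral condition on $\partial C_L$ to reduce everything to statements about functions supported in $\overline\Omega$. The central input is the existence of a continuous surjective trace operator from $H^1(\R^{N+1}_+, y^{1-2s})$ onto $H^s(\R^N)$ (the Caffarelli--Silvestre / Stinga--Torrea framework), together with Proposition \ref{prop_extension_to_0}, which identifies $\mb{H}^s(\Omega)$ with the subspace of $H^s(\R^N)$ of functions vanishing outside $\Omega$.

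To establish continuity, for $V \in H^1_{0,L}(C, y^{1-2s})$ I would extend $V$ by zero to all of $\R^{N+1}_+$ to obtain $\widetilde V$. Because $V$ vanishes on $\partial C_L$ in the trace sense and $\partial\Omega$ is Lipschitz, a standard patching argument (as in \cite[Proposition 2.1]{CDDS}) gives $\widetilde V \in H^1(\R^{N+1}_+, y^{1-2s})$ with comparable norm. Applying the global trace theorem then produces $\Tr(\widetilde V)\in H^s(\R^N)$, supported in $\overline\Omega$. For $s\in(0,1)\setminus\{1/2\}$ this already puts the trace in $H^s_0(\Omega)=\mb{H}^s(\Omega)$, using the identification of $H^s_0(\Omega)$ with $H^s(\R^N)$-functions supported in $\overline\Omega$; for $s=1/2$ one must additionally verify the Hardy-type integrability $\int_\Omega v^2/d(x,\partial\Omega)\,dx<\infty$, which follows from a weighted Hardy inequality near $\partial\Omega\times\{0\}$ driven by the vanishing of $V$ on $\partial C_L$.

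For surjectivity, given $v\in\mb{H}^s(\Omega)$ with spectral expansion $v=\sum_n v_n Y_{\alpha,k,n}$, I would construct an explicit extension via
\begin{equation*}
V(x,y) := \sum_{n=1}^\infty v_n\, Y_{\alpha,k,n}(x)\, \varphi_s\!\left(\sqrt{\mu_{\alpha,k,n}}\,y\right),
\end{equation*}
where $\varphi_s(\tau)=c_s\tau^s K_s(\tau)$ is the normalized Bessel-type profile (with $\varphi_s(0)=1$) solving the one-dimensional ODE $(y^{1-2s}\varphi')'=y^{1-2s}\varphi$ with exponential decay at infinity. A Parseval-type computation, using the orthogonality of $\{Y_{\alpha,k,n}\}$ and the identity (from the ODE)
\begin{equation*}
\int_0^\infty y^{1-2s}\bigl(\mu_n \varphi_s(\sqrt{\mu_n}y)^2+\varphi_s'(\sqrt{\mu_n}y)^2\mu_n\bigr)\,dy = c_s'\,\mu_n^s,
\end{equation*}
identifies $\|V\|_{H^1(C,y^{1-2s})}^2$ with $\sum_n\mu_{\alpha,k,n}^s v_n^2=\|v\|_{\mb{H}_{\alpha,k}^s(\Omega)}^2$ up to constants, so $V\in H^1(C,y^{1-2s})$. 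Moreover $V$ vanishes on $\partial C_L$ because each $Y_{\alpha,k,n}$ does, hence $V\in H^1_{0,L}(C,y^{1-2s})$, and $\Tr(V)=v$ by construction.

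The main obstacle is the borderline case $s=1/2$: here $\mb{H}^{1/2}(\Omega)=H^{1/2}_{00}(\Omega)$ is strictly smaller than $H^{1/2}_0(\Omega)=H^{1/2}(\Omega)$, so the pure support condition on the trace is insufficient to guarantee membership in $\mb{H}^{1/2}(\Omega)$. One must prove the sharp two-sided Hardy-type bound $\int_\Omega v^2/d(x,\partial\Omega)\,dx\asymp$ something controlled by $\|V\|^2_{H^1(C,y^{1-2s})}$, which is exactly the content of \cite[Lemma 2.6]{CT}; invoking that lemma together with the patching argument of \cite[Proposition 2.1]{CDDS} then completes the proof.
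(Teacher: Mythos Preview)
Your proposal is correct and aligns with the paper's approach: the paper does not give an explicit proof but simply cites \cite[Proposition 2.1]{CDDS} and \cite[Proposition 2.1, Lemma 2.6]{CT}, which are precisely the inputs you invoke (the spectral Bessel-type extension for surjectivity, the trace theory for the cylinder plus the $H^{1/2}_{00}$ Hardy condition for the borderline case). One small caveat: in your Parseval step the orthogonality of the $Y_{\alpha,k,n}$ in $H^1_0(\Omega)$ holds only with respect to the Hardy-weighted form $(\cdot,\cdot)_{\alpha,k}$, not the plain Dirichlet form, so the computation naturally controls $\int_C y^{1-2s}(|\nabla V|^2-\alpha|x|_k^{-2}V^2)\,dz$ rather than the pure $H^1(C,y^{1-2s})$ norm; you then need \eqref{ineq_hardy_extened} to pass between the two, exactly as the paper does in the proof of Theorem \ref{theorem_extension}.
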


 See Section \ref{sec_inequalities_and_extension} for a proof of the  following next extension theorem,.
\begin{theorem}\label{theorem_extension}
Let $ v \in \mb{H}^s(\Omega)$, $k \in \{3,\dots,N\}$ and $\alpha$ as in \eqref{hp_alpha}. Then there exists a unique function $V \in H_{0,L}^1(C,y^{1-2s})$ such that $V$ weakly solves the problem
\begin{equation}\label{prob_extension}
\begin{cases}
-\dive(y^{1-2s}\nabla V)= y^{1-2s} \frac{\alpha}{|x|_k^2}V, \quad &\text{ in } C,\\
\Tr(V)=v, \quad &\text{ on } \Omega,\\
-\lim_{y \to 0^+}y^{1-2s}\pd{V}{y}=c_{N,s}L^s_{k,\alpha }v, \quad &\text{ on } \Omega,
\end{cases}
\end{equation}
where $c_{N,s}>0$ is a constant depending only on $N$ and $s$, in the sense that 
\begin{equation}\label{eq_extension}
\int_{C} y^{1-2s} \nabla V\cdot\nabla \phi \, dz-\int_{C} y^{1-2s}\frac{\alpha}{|x|_k^2}V\phi  \, dz
=c_{N,s}\df{(\mb{H}_{\alpha,k}^s(\Omega))^*}{L_{\alpha,k}^s v}{\phi(\cdot,0)}{\mb{H}_{\alpha,k}^s(\Omega)}
\end{equation}
for any $\phi \in C_c^{\infty}(\Omega \times [0,+\infty))$. Furthermore 
\begin{equation}\label{eq_norm_V_norm_v}
\int_{C}y^{1-2s} |\nabla V(x,y)|^2 \, dz-\int_{C}y^{1-2s} \frac{\alpha}{|x|^2_k}V^2 \, dz = c_{N,s} \norm{v}^2_{\mb{H}_{\alpha,k}^s(\Omega)}
\end{equation} 
and $V$ is the only solution to  the minimization problem 
\begin{equation}\label{def_min_prob}
\inf\left\{\int_{C}y^{1-2s} \left(|\nabla W|^2-\frac{\alpha}{|x|^2_k}w^2\right) \,dz: W \in H_{0,L}^1(C,y^{1-2s}) \text{ and } \Tr(W)=v\right\}.
\end{equation}
\end{theorem}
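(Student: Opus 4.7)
The plan is to use the spectral basis $\{Y_{\alpha,k,n}\}_n$ to separate variables and reduce the extension problem to a one-parameter family of one-dimensional ODEs in $y$, then construct $V$ as a series and verify all four claims.

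First I would analyse, for each $\mu>0$, the ODE
\[
(y^{1-2s}\phi'(y))' = \mu\, y^{1-2s}\phi(y),\qquad y>0,
\]
with $\phi(0)=1$ and $\phi(y)\to 0$ as $y\to\infty$. After the rescaling $t=\sqrt{\mu}\,y$ this is a modified Bessel equation with a unique decaying normalized solution $\phi_\mu$. A direct computation (scaling in $y$ plus one integration by parts against $\phi_\mu$) produces the two key identities
\[
-\lim_{y\to 0^+} y^{1-2s}\phi_\mu'(y) = c_{N,s}\,\mu^s, \qquad \int_0^\infty y^{1-2s}\bigl((\phi_\mu')^2 + \mu\,\phi_\mu^2\bigr)\,dy = c_{N,s}\,\mu^s,
\]
with the \emph{same} constant $c_{N,s}>0$ independent of $\mu$.

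Writing $v=\sum_n v_n Y_{\alpha,k,n}$, I would then define
\[
V(x,y):=\sum_{n=1}^\infty v_n\, \phi_{\mu_{\alpha,k,n}}(y)\,Y_{\alpha,k,n}(x).
\]
Because $\{Y_{\alpha,k,n}/\sqrt{\mu_{\alpha,k,n}}\}_n$ is $(\cdot,\cdot)_{\alpha,k}$-orthonormal by Remark \ref{remark_equivalence_norm}, the partial sums are mutually orthogonal in the mixed sense required, and the two identities above yield
\[
\int_{C} y^{1-2s}\bigl(|\nabla V|^2 - \tfrac{\alpha}{|x|_k^2} V^2\bigr)\,dz = c_{N,s}\sum_{n=1}^\infty \mu_{\alpha,k,n}^s\, v_n^2 = c_{N,s}\,\norm{v}^2_{\mb{H}_{\alpha,k}^s(\Omega)},
\]
which simultaneously proves Cauchyness in $H^1(C,y^{1-2s})$ and establishes \eqref{eq_norm_V_norm_v}. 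The lateral trace on $\partial C_L$ vanishes since each $Y_{\alpha,k,n}\in H^1_0(\Omega)$, and $\Tr(V)=v$ follows from $\phi_\mu(0)=1$ together with Proposition \ref{prop_trace}. To verify \eqref{eq_extension}, I would first test against separated functions $\phi=\psi(y)Y_{\alpha,k,m}(x)$ with $\psi\in C^\infty_c([0,\infty))$: spectral orthogonality collapses the bulk integral, and the ODE for $\phi_{\mu_{\alpha,k,m}}$ plus integration by parts in $y$ leaves only the boundary contribution $-v_m\psi(0)\lim_{y\to 0^+}y^{1-2s}\phi_{\mu_{\alpha,k,m}}'(y) = c_{N,s}\mu_{\alpha,k,m}^s v_m\psi(0)$, matching the right-hand side $c_{N,s}\psi(0)\mu_{\alpha,k,m}^s v_m$. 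A density argument, expanding a general $\phi\in C^\infty_c(\Omega\times[0,\infty))$ along $\{Y_{\alpha,k,n}\}$ in the $x$-variable, extends the identity. For uniqueness, if $V_1,V_2$ both solve \eqref{eq_extension} with trace $v$, then $W:=V_1-V_2$ has zero trace and, tested against suitable approximations of itself, yields $\int_C y^{1-2s}(|\nabla W|^2 - \frac{\alpha}{|x|_k^2}W^2)\,dz=0$; Proposition \ref{prop_ineq_hardy_extened} combined with \eqref{hp_alpha} then forces $W\equiv 0$. For the minimization \eqref{def_min_prob}, the same Hardy inequality makes the quadratic functional strictly convex and coercive on $H^1_{0,L}(C,y^{1-2s})$, the constraint set is nonempty (Proposition \ref{prop_trace}) and affine, and \eqref{eq_extension} is its Euler--Lagrange equation, so $V$ is the unique minimizer.

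The main obstacle I anticipate is the step that couples the ODE analysis with the weak formulation: extracting the Neumann-type datum with \emph{the same} universal constant $c_{N,s}$ that appears in the energy identity, so that the boundary term is correctly identified as $c_{N,s}L^s_{\alpha,k}v$, and justifying the density passage from separated to general test functions, since only $L^2$-convergence of the $x$-Fourier expansion is automatic and one needs convergence in the duality $\langle (\mb{H}_{\alpha,k}^s(\Omega))^*, \mb{H}_{\alpha,k}^s(\Omega)\rangle$.
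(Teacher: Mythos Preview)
Your proposal is correct and follows essentially the same route as the paper: separation of variables along the spectral basis $\{Y_{\alpha,k,n}\}$, the one-dimensional Bessel-type ODE for the $y$-profiles with the two identities $-\lim_{y\to 0^+}y^{1-2s}h_n'(y)=c_{N,s}\mu_{\alpha,k,n}^s$ and $\int_0^\infty y^{1-2s}((h_n')^2+\mu_{\alpha,k,n}h_n^2)\,dy=c_{N,s}\mu_{\alpha,k,n}^s$, and then uniqueness and minimality via the extended Hardy inequality. The paper addresses precisely the obstacle you flagged by expanding the general test function $\phi\in C_c^\infty(\Omega\times[0,\infty))$ in the $x$-eigenbasis, $\phi(x,y)=\sum_n \phi_n(y)Y_{\alpha,k,n}(x)$, and justifying the interchange of summation and $y$-integration via a Cauchy--Schwarz bound on the tail together with the Monotone Convergence Theorem; this replaces the density step you sketched.
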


From Theorem  \ref{theorem_extension} we deduce the following corollary.
\begin{corollary}\label{corollary_extension}
Let $u \in \mb{H}^s(\Omega)$ be a solution of \eqref{eq_weak_formulation_hardy_operator}.  Then there exists  a unique   $U \in H_{0,L}^1(C,y^{1-2s})$  such that 
\begin{equation}\label{prob_extension_with_equation}
\begin{cases}
-\dive(y^{1-2s}\nabla U)= y^{1-2s} \frac{\alpha}{|x|_k^2}U, \quad &\text{ in } C,\\
\Tr(U)=u, \quad &\text{ on } \Omega,\\
-\lim_{y \to 0^+}y^{1-2s}\pd{U}{y}=c_{N,s}gu, \quad &\text{ on } \Omega,
\end{cases}
\end{equation}
where $c_{N,s}>0$ is the constant depending only on $N$ and $s$ defined in Theorem \ref{theorem_extension}, in the sense that 
\begin{equation}\label{eq_extension_with_equation}
\int_{C} y^{1-2s} \nabla U\cdot\nabla \phi \, dz-\int_{C} y^{1-2s}\frac{\alpha}{|x|_k^2}U\phi  \, dz
=c_{N,s}\int_{\Omega} gu  \phi(\cdot,0) \, dx
\end{equation}
for any $\phi \in C_c^{\infty}(\Omega  \times [0,+\infty))$.
Furthermore 
\begin{equation}\label{eq_norm_U_norm_u}
\int_{C}y^{1-2s} |\nabla U(x,y)|^2 \, dz-\int_{C}y^{1-2s} \frac{\alpha}{|x|^2_k}U^2 \, dz = c_{N,s} \norm{u}^2_{\mb{H}_{\alpha,k}^s(\Omega)}=c_{N,s}\int_{\Omega} gu^2 \, dx.
\end{equation} 
\end{corollary}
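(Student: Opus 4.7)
The statement is essentially a direct application of Theorem \ref{theorem_extension} to the particular choice $v=u$, coupled with the weak formulation \eqref{eq_weak_formulation_hardy_operator} satisfied by $u$. My plan is therefore to split the argument into three short steps.

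First, since $u\in\mb{H}^s(\Omega)$, Theorem \ref{theorem_extension} applied with $v=u$ produces a unique $U\in H^1_{0,L}(C,y^{1-2s})$ solving \eqref{prob_extension} with datum $u$; in particular identity \eqref{eq_extension} holds for every $\phi\in C_c^\infty(\Omega\times[0,+\infty))$. To pass from \eqref{eq_extension} to \eqref{eq_extension_with_equation} I would observe that for such a test function the restriction $\phi(\cdot,0)$ belongs to $C_c^\infty(\Omega)$, so that the weak formulation \eqref{eq_weak_formulation_hardy_operator} of the equation $L_{\alpha,k}^s u=gu$ gives
\begin{equation*}
\df{(\mb{H}_{\alpha,k}^s(\Omega))^*}{L_{\alpha,k}^s u}{\phi(\cdot,0)}{\mb{H}_{\alpha,k}^s(\Omega)}
=\int_\Omega g\,u\,\phi(\cdot,0)\,dx.
\end{equation*}
Substituting into the right-hand side of \eqref{eq_extension} yields \eqref{eq_extension_with_equation}. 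Uniqueness of $U$ is inherited verbatim from Theorem \ref{theorem_extension}.

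For the energy identity \eqref{eq_norm_U_norm_u}, the first equality is simply \eqref{eq_norm_V_norm_v} written with $V=U$, $v=u$. The second equality requires testing the equation with $u$ itself. To justify this, I would first extend the weak formulation \eqref{eq_weak_formulation_hardy_operator} from $C_c^\infty(\Omega)$ to all test functions in $\mb{H}^s(\Omega)=\mb{H}_{\alpha,k}^s(\Omega)$ (cf.\ Proposition \ref{prop_Halphak_Hs}) by density: the left-hand side is continuous in $\phi$ because $L^s_{\alpha,k}u\in(\mb{H}_{\alpha,k}^s(\Omega))^*$, and the right-hand side is continuous because, in view of assumption \eqref{hp_g} and the Hardy-type inequality \eqref{ineq_hardy_Hs}, one has
\begin{equation*}
\left|\int_\Omega g\,u\,\phi\,dx\right|
\le C_g\int_\Omega |x|^{-2s+\e}\,|u|\,|\phi|\,dx
\le C\,\|u\|_{\mb{H}^s(\Omega)}\|\phi\|_{\mb{H}^s(\Omega)}.
\end{equation*}
Choosing $\phi=u$ and using the Riesz-isomorphism property \eqref{property_fractional_L_Rietz} gives
\begin{equation*}
\|u\|_{\mb{H}_{\alpha,k}^s(\Omega)}^2
=\df{(\mb{H}_{\alpha,k}^s(\Omega))^*}{L^s_{\alpha,k}u}{u}{\mb{H}_{\alpha,k}^s(\Omega)}
=\int_\Omega g\,u^2\,dx,
\end{equation*}
which concludes the chain of equalities in \eqref{eq_norm_U_norm_u}.

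There is no substantial obstacle here: the result is purely a corollary. The only point that deserves some care is the density step, where one must check that the functional $\phi\mapsto\int_\Omega g u\phi\,dx$ belongs to $(\mb{H}^s(\Omega))^*$; this is ensured by the decay of $g$ in \eqref{hp_g} together with Proposition \ref{prop_ineq_hardy_Hs}, as already remarked after \eqref{eq_weak_formulation_hardy_operator}. Everything else is a transcription of Theorem \ref{theorem_extension}.
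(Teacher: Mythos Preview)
Your proposal is correct and matches the paper's approach: the paper gives no explicit proof and simply presents the result as an immediate corollary of Theorem \ref{theorem_extension}, which is exactly what you do by specializing \eqref{eq_extension} and \eqref{eq_norm_V_norm_v} to $v=u$ and invoking \eqref{eq_weak_formulation_hardy_operator}. The density argument you supply for \eqref{eq_norm_U_norm_u} is the natural justification and is already foreshadowed in the remark following \eqref{eq_weak_formulation_hardy_operator}.
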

Let for, any $r>0$,
\begin{align*}
&B^+_r:= \{z \in \R^{N+1}_+: |z|<r\}, \quad S_r^+:=\{z \in \R^{N+1}_+: |z|=r\},\\
&B'_r:= \{z=(x,y) \in \R^{N+1}: |x|<r, y=0\}.
\end{align*}
Let    $\theta=\frac{z}{|z|}$ for any $z\in \R^{N+1}$ and  $\theta'=(\theta_1, \dots, \theta_N)$.

The asymptotic profile of  a solution $U$ of \eqref{eq_extension_with_equation} in $0$  will turn out to be related to the following eigenvalue problem
\begin{equation}\label{prob_eigenvalue_sphere}
\begin{cases}
-\dive_{\mb{S}}(\theta_{N+1}^{1-2s}\nabla_{\mb{S}}Z)-\theta_{N+1}^{1-2s}\frac{\alpha}{|\theta|^2_k}Z=\gamma\theta_{N+1}^{1-2s} Z, & \text{ in  } \mb{S}^+,\\
-\lim\limits_{\theta_{N+1}\to 0^+}\theta_{N+1}^{1-2s} \nabla_{\mb{S}}Z \cdot \nu =0, & \text{ on } \mb{S}',
\end{cases}
\end{equation} 
where $\nu$ is the outer normal vector to $\mb{S}^+$ on $\mb{S}'$, that is $\nu=-(0,\dots, 0,1)$ and 
\begin{align*}
&\mb{S}:=\{\theta\in \R^{N+1}:|\theta|^2=1\},\\
&\mb{S}^+:=\{\theta=(\theta', \theta_{N+1}) \in \mb{S}: \theta_{N+1}>0\},\\
&\mb{S}':=\{\theta=(\theta', \theta_{N+1}) \in \mb{S}: \theta_{N+1}=0\}.
\end{align*}
We refer to  Subsection \ref{subsec_an_eigenvalue_problem_on_S+} for a variational formulation of \eqref{prob_eigenvalue_sphere}.
By classical spectral theory, see Subsection  \ref{subsec_an_eigenvalue_problem_on_S+} for further details,
the eigenvalues of \eqref{prob_eigenvalue_sphere} 
are a non-decreasing and diverging sequence $\{\gamma_{\alpha,k,n}\}_{n \in \mb{N}\setminus \{0\}}$ (we repeat each eigenvalue according to its multiplicity). 
We have the following estimate on $\gamma_{\alpha,k,1}$:
\begin{equation}\label{ineq_gammaalphak1}
\gamma_{\alpha,k,1}>-\left(\frac{N-2s}{2}\right)^2
\end{equation}
for any $k \in \{3,\dots,N\}$ and $\alpha$ as in \eqref{hp_alpha}, see Proposition \ref{prop_ineq_hardy_sphere}. 
We can actually compute $\gamma_{\alpha,k,1}$ in terms of the first eigenvalue $\eta_{\alpha,k,1}$ of the problem 
\begin{equation}\label{prob_eigenvalue_S'}
-\Delta_{\mb{S}'}\Psi-\frac{\alpha}{|\theta'|_k^2}\Psi= \eta \Psi \quad \text{ in } \mb{S}'
\end{equation} 
as 
\begin{equation}\label{eq_first_eigenvlaue}
\gamma_{\alpha,k,1}= 2(1-s)\left[\sqrt{\left(\frac{N-2}{2}\right)^2+\eta_{\alpha,k,1}}-\frac{N-2}{2}\right] +\eta_{\alpha,k,1},
\end{equation}
see Section \ref{sec_computation_eigenvalue}. In particular, if $k=N$ then $\eta_{\alpha,k,1}=-\alpha$ and so 
\begin{equation}\label{eq_first_eigenvlaue_k=N}
\gamma_{\alpha,N,1}= 2(1-s)\left[\sqrt{\left(\frac{N-2}{2}\right)^2-\alpha}-\frac{N-2}{2}\right] -\alpha.
\end{equation}
If $k=N$, we are able to obtain an explicit expression of $\gamma_{\alpha,N,1}$ for any $\alpha \in \left(-\infty,\frac{N-2}{2}\right)$. For  the restricted 
fractional Laplacian with a Hardy-type potential  it is also  possible to obtain a formula for the first eigenvalue of the corresponding problem on a hemisphere  although with a  more implicit expression, see \cite[Proposition 2.3]{FF}.

\begin{theorem}\label{theorem_blow_up_extension_not_precise}
Let $U$ be a non-trivial solution of \eqref{eq_extension_with_equation} and suppose that $g$ satisfies \eqref{hp_g}. Then there exist an eigenvalue  $\gamma_{\alpha,k,n}$ of  \eqref{prob_eigenvalue_sphere} 
and a correspondent eigenfunction $Z$ such that 
\begin{equation}\label{eq_limit_extension_blow_up_not_precise}
\la^{\frac{N-2s}{2}-\sqrt{\left(\frac{N-2s}{2}\right)^2+\gamma_{\alpha,k,n}}}U(\la z)\to |z|^{-\frac{N-2s}{2}+\sqrt{\left(\frac{N-2s}{2}\right)^2+\gamma_{\alpha,k,n}}} Z(z/|z|) 
\quad  \text { as } \la \to 0^+
\end{equation}
 strongly in $H^1(B_1^+,y^{1-2s})$.
\end{theorem}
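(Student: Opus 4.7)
The plan is to prove this via an Almgren-type monotonicity formula combined with a blow-up analysis, following a scheme that is now rather standard in this circle of ideas (e.g. \cite{FF,FFT,FSlincei}). First I would introduce, for $r>0$ small enough so that $B_r^+\subset C$, the height function
\begin{equation*}
H(r):=\frac{1}{r^{N-2s}}\int_{S_r^+} y^{1-2s}U^2\,dS
\end{equation*}
and the energy
\begin{equation*}
D(r):=\frac{1}{r^{N-1-2s}}\left(\int_{B_r^+}y^{1-2s}\Bigl(|\nabla U|^2-\frac{\alpha}{|x|_k^2}U^2\Bigr)\,dz-c_{N,s}\int_{B'_r}gu^2\,dx\right),
\end{equation*}
together with the Almgren frequency $\mc{N}(r):=D(r)/H(r)$. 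Using the extension equation \eqref{eq_extension_with_equation}, a direct computation gives the key identity $H'(r)=\frac{2}{r}D(r)$ (up to a boundary term on $\p C$, which vanishes because $U=0$ there), so $\mc{N}$ measures the logarithmic growth of $H$.

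The core of the argument is to show that $\mc{N}(r)$ admits a finite limit $\gamma\in\R$ as $r\to 0^+$. This requires a Pohozaev-type identity for $U$ (the content of Section \ref{sec_Pohozaev_identity}), which, once differentiated in $r$, yields $D'(r)$ in terms of a positive quadratic boundary form plus error terms controlled by the remainder in \eqref{hp_g}. Combining the expressions for $H'$ and $D'$, one obtains that $r\mapsto\mc{N}(r)+\Phi(r)$ is monotone nondecreasing for some explicit $\Phi(r)=O(r^{\e})$, whence $\lim_{r\to 0^+}\mc{N}(r)=\gamma$ exists and, via the Hardy-type inequality \eqref{ineq_hardy_extened} and \eqref{ineq_gammaalphak1}, satisfies $\gamma>-\tfrac{N-2s}{2}$. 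The main technical obstacle here is precisely the rigorous derivation of the Pohozaev identity: the Muckenhoupt weight $y^{1-2s}$ is singular or degenerate on $\R^N\times\{0\}$, the Hardy potential $\alpha|x|_k^{-2}$ is unbounded at $0$, and the right-hand side $gu$ only satisfies \eqref{hp_g}. I would handle this by the approximation procedure mentioned in the introduction: regularise $U$ via solutions of a family of nondegenerate problems parametrised by a small cut-off, deduce the identity on the approximating sequence by classical arguments, and pass to the limit using the Implicit Function Theorem device of \cite{FSlincei}.

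Once monotonicity is established, I would define the rescaled family
\begin{equation*}
U^\la(z):=\frac{U(\la z)}{\sqrt{H(\la)}},\quad \la\in(0,1),
\end{equation*}
which satisfies $\int_{S_1^+}y^{1-2s}(U^\la)^2\,dS=1$ and, by the bound on $\mc{N}$, is bounded in $H^1(B_1^+,y^{1-2s})$ uniformly in $\la$. A compactness argument (using the compact trace and a weighted Rellich embedding together with the Hardy-type inequality \eqref{ineq_hardy_extened}) then extracts, along any sequence $\la_n\to 0^+$, a weak limit $\tilde U\not\equiv 0$. Exploiting the fact that $\mc{N}$ is constant along $\tilde U$ (its frequency equals $\gamma$ at every scale, by the limit of $\mc{N}(\la r)$ as $\la\to 0^+$), one shows that $\tilde U$ is homogeneous of degree $\gamma$. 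Separating variables $\tilde U(r\theta)=r^{\gamma}Z(\theta)$ in \eqref{prob_extension} reveals that $Z$ must be an eigenfunction of \eqref{prob_eigenvalue_sphere} with eigenvalue $\gamma_{\alpha,k,n}=\gamma(\gamma+N-2s)$, which inverts into $\gamma=-\tfrac{N-2s}{2}+\sqrt{(\tfrac{N-2s}{2})^2+\gamma_{\alpha,k,n}}$, matching \eqref{eq_limit_extension_blow_up_not_precise}.

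Finally, to upgrade the weak convergence to strong convergence in $H^1(B_1^+,y^{1-2s})$, I would show the convergence of the Dirichlet-type norms by combining the variational characterisation of $U^\la$ as an almost-minimiser of the associated energy and the already-established convergence $H(\la)\to $ an explicit rate, mimicking the classical Brezis--Lieb argument in the weighted setting. The independence of the limit from the subsequence $\la_n$, needed to conclude full convergence as $\la\to 0^+$ rather than along subsequences, would follow from a uniqueness argument based on the precise blow-up theorem (Theorem \ref{theorem_blow_up_precise}) proved in parallel; alternatively, since the statement here only asserts convergence along some family, this point can be postponed to the more refined theorem. The key obstacle throughout remains the Pohozaev identity, as every monotonicity and blow-up step depends on it being available in spite of the combined singularities of the weight, the potential, and the data.
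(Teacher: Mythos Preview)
Your outline follows the paper's approach closely: monotonicity of $\mc N$ via a Pohozaev identity (obtained through regularised approximants and the Implicit Function Theorem device of \cite{FSlincei}), blow-up of $V^\la=U(\la\cdot)/\sqrt{H(\la)}$, and identification of any limit as $|z|^\gamma Z(z/|z|)$ with $\gamma(\gamma+N-2s)=\gamma_{\alpha,k,n}$. Two technical points are underdeveloped, however.

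First, the strong $H^1$ convergence is not obtained here via an almost-minimiser or Brezis--Lieb argument; $V^\la$ solves an equation, not a minimisation problem, and there is no obvious energy gap to exploit. The paper instead selects, for each small $\la$, a radius $R_\la\in[1,2]$ on which $\int_{S_{R_\la}^+}y^{1-2s}|\nabla V^\la|^2\,dS$ is controlled by the volume energy on $B_2^+$ (a pigeonhole/mean-value argument). This guarantees that $\partial_\nu V^{R_\la\la}$ is bounded in $L^2(\mb{S}^+,\theta_{N+1}^{1-2s})$, hence weakly subconvergent; testing the equation for $V^{R_\la\la}$ with itself and using compactness of the trace on $\mb S^+$ then gives convergence of the equivalent norm $\|\cdot\|_{g,\alpha,k}$, hence strong convergence. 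A doubling estimate transfers the conclusion from $V^{R_\la\la}$ back to $V^{\la}$.

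Second, and more substantially, passing from $V^\la\to|z|^\gamma Z$ to the statement with normalisation $\la^{-\gamma}U(\la z)$ requires $\lim_{r\to 0^+}r^{-2\gamma}H(r)>0$. Existence and finiteness of this limit follow from integrating $H'/H=2\mc N/r$, but strict positivity does not; if the limit were zero the theorem would produce $Z\equiv 0$, which is not an eigenfunction. The paper proves positivity through a Fourier analysis on $\mb S^+$: the coefficients $\varphi_{n_0,i}(\la)=\int_{\mb S^+}\theta_{N+1}^{1-2s}U(\la\theta)Z_{\alpha,k,n_0,i}\,dS$ satisfy an explicit second-order ODE, from which one shows $\la^{-\gamma}\varphi_{n_0,i}(\la)\to\beta_i$ with $\beta_i$ given by an integral formula. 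Assuming $r^{-2\gamma}H(r)\to 0$ forces all $\beta_i=0$, and then a sharper remainder estimate yields $\varphi_{n_0,i}(\la)=O(\la^{\gamma+\e})$, contradicting the subsequential blow-up to a unit-norm eigenfunction. This same Fourier computation is what makes the $\beta_i$ subsequence-independent and gives the full-limit statement. Your first option (deferring to the precise Theorem~\ref{theorem_blow_up_extension_precise}) is exactly what the paper does; your alternative reading, that ``the statement here only asserts convergence along some family'', is not correct---``as $\la\to 0^+$'' in \eqref{eq_limit_extension_blow_up_not_precise} means the full limit.
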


\begin{remark}\label{remark_trace}
Let $r>0$. Thanks to \cite{LM1}   there exists a linear and continuous trace operator
\begin{equation}\label{def_trace_Hs}
\mathop{{\rm{Tr}}_{B'_r}}:H^1(B_r^+,y^{1-2s}) \to H^s(B_r').
\end{equation}
If $\overline{B'_r} \subset \Omega$, then  any function $V \in H^1(B_r^+,y^{1-2s})$  can be extended to an element $\tilde{V}$ of $H^1_{0,L}(C,y^{1-2s})$ (see \eqref{def_H10L} and \cite{C})  and  
$\mathop{{\rm{Tr}}}_{B'_r}(V)= \Tr(\tilde{V})_{|B_r'}$. Therefore with a slight abuse we will simply   use $\Tr$ instead of $\mathop{{\rm{Tr}}_{B'_r}}$ to indicate the operator $\mathop{{\rm{Tr}}_{B'_r}}$.
\end{remark}

From Remark  \ref{remark_trace} and the previous theorem we obtain the following.

\begin{theorem}\label{theorem_blow_up_not_precise}
Let $u$ be  a non-trivial solution solution of \eqref{eq_weak_formulation_hardy_operator} and suppose that $g$ satisfies \eqref{hp_g}. Then there exist an eigenvalue  $\gamma_{\alpha,k,n}$ of  \eqref{prob_eigenvalue_sphere} 
and a correspondent eigenfunction $Z$ such that 
\begin{equation}\label{eq_limit_blow_up_not_precise}
\la^{\frac{N-2s}{2}-\sqrt{\left(\frac{N-2s}{2}\right)^2+\gamma_{\alpha,k,n}}}u(\la x)\to |x|^{-\frac{N-2s}{2}+\sqrt{\left(\frac{N-2s}{2}\right)^2+\gamma_{\alpha,k,n}}} \Tr(Z(\cdot/|\cdot|))(x)
\quad  \text { as } \la \to 0^+
\end{equation}
strongly in $H^s(B_1')$.
\end{theorem}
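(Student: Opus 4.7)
The plan is to obtain Theorem \ref{theorem_blow_up_not_precise} as a direct consequence of the extension-version Theorem \ref{theorem_blow_up_extension_not_precise}, applied to the Caffarelli--Silvestre-type extension of $u$ provided by Corollary \ref{corollary_extension}, and then to push the strong $H^1(B_1^+,y^{1-2s})$-convergence through the trace operator of Remark \ref{remark_trace}.

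First I would apply Corollary \ref{corollary_extension} to the non-trivial weak solution $u$: this produces the unique extension $U \in H^1_{0,L}(C,y^{1-2s})$ solving \eqref{prob_extension_with_equation} with $\Tr(U)=u$. Since $u \not\equiv 0$ one has $U \not\equiv 0$, so Theorem \ref{theorem_blow_up_extension_not_precise} supplies an eigenvalue $\gamma_{\alpha,k,n}$ of \eqref{prob_eigenvalue_sphere} and a corresponding eigenfunction $Z$ such that, setting
\begin{equation*}
\beta:=\tfrac{N-2s}{2}-\sqrt{\bigl(\tfrac{N-2s}{2}\bigr)^2+\gamma_{\alpha,k,n}},
\end{equation*}
the rescalings $U_\la(z):=\la^{\beta}U(\la z)$ converge to $W(z):=|z|^{-\beta}Z(z/|z|)$ strongly in $H^1(B_1^+,y^{1-2s})$ as $\la\to 0^+$. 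For all sufficiently small $\la>0$, one has $\la\overline{B_1'}\subset\Omega$, so $U_\la$ belongs to $H^1(B_1^+,y^{1-2s})$ and, since traces commute with horizontal dilations and $\Tr(U)=u$, the trace on $B_1'$ (well defined by Remark \ref{remark_trace}) satisfies $\Tr(U_\la)(x)=\la^{\beta}u(\la x)$. At the level of the limit, $W\in H^1(B_1^+,y^{1-2s})$ as the $H^1$-strong limit of $U_\la$, and its trace on $B_1'$ is $|x|^{-\beta}\Tr(Z(\cdot/|\cdot|))(x)$, since at $y=0$ we have $|z|=|x|$ and $z/|z|=(x/|x|,0) \in \mb{S}'$. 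Applying the linear continuous trace operator $\Tr:H^1(B_1^+,y^{1-2s})\to H^s(B_1')$ to the strong convergence $U_\la \to W$ then yields \eqref{eq_limit_blow_up_not_precise} in $H^s(B_1')$, completing the proof.

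The main subtlety, rather than the transfer of convergence (which is routine once the continuity of $\Tr$ in Remark \ref{remark_trace} is in hand), lies in the correct identification of the trace of the homogeneous profile $W$: one has to make sure that $Z(\cdot/|\cdot|)$ admits a trace on $B_1'$ as an element of $H^s(B_1')$, and that this trace coincides with the angular boundary value of $Z$ on $\mb{S}'$ in the weighted-trace sense used throughout the paper. The inclusion $W\in H^1(B_1^+,y^{1-2s})$ is free from the convergence, so Remark \ref{remark_trace} can be applied; what remains is to verify, using the variational formulation of \eqref{prob_eigenvalue_sphere} (see Subsection \ref{subsec_an_eigenvalue_problem_on_S+}) and a change to spherical coordinates, that $\Tr(Z(\cdot/|\cdot|))$ indeed equals the boundary value of $Z$ on $\mb{S}'$ transported to $B_1'$. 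Once this identification is recorded, the result follows at once from the continuity of the trace and \eqref{eq_limit_extension_blow_up_not_precise}.
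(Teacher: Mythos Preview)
Your proposal is correct and follows exactly the same route as the paper, which derives Theorem \ref{theorem_blow_up_not_precise} directly from Theorem \ref{theorem_blow_up_extension_not_precise} via the continuous trace operator of Remark \ref{remark_trace} applied to the extension $U$ furnished by Corollary \ref{corollary_extension}. The additional care you take in identifying $\Tr(W)$ with $|x|^{-\beta}\Tr(Z(\cdot/|\cdot|))$ is a welcome clarification of a step the paper leaves implicit.
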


We will also prove a more precise and complete version of  Theorem \ref{theorem_blow_up_extension_not_precise} and Theorem \ref{theorem_blow_up_not_precise} in 
Section \ref{sec_the_blow_up_analysis}, computing the coordinates of the eigenfunction $Z$  respect to a basis of the eigenspace corresponding to  $\gamma_{\alpha,k,n}$.
Furthermore we can deduce the following strong  unique continuation properties as corollaries of Theorem \ref{theorem_blow_up_extension_not_precise} and  Theorem \ref{theorem_blow_up_not_precise} respectively.
\begin{corollary}\label{corollary_unique_continuation_extention}
Let $U$ be a solution of \eqref{eq_extension_with_equation} and suppose that $g$ satisfies \eqref{hp_g}. If
\begin{equation}\label{eq_corollary_unique_continuation_extention}
U(z)=o(|z|^n)=o(|(x,y)|^n) \text{ as } x \to 0, \;y \to 0^+  \quad  \text{ for any  } n \in \mathbb{N}
\end{equation}
then $U \equiv 0$ on $\Omega \times (0,\infty)$.
\end{corollary}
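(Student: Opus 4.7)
The plan is to argue by contradiction, deducing $U\equiv 0$ from the blow-up classification in Theorem~\ref{theorem_blow_up_extension_not_precise}. Assuming $U\not\equiv 0$, that theorem produces an eigenvalue $\gamma_{\alpha,k,n_0}$ of \eqref{prob_eigenvalue_sphere} and a nonzero corresponding eigenfunction $Z$ for which, setting $\kappa:=-\tfrac{N-2s}{2}+\sqrt{(\tfrac{N-2s}{2})^2+\gamma_{\alpha,k,n_0}}$ (a finite real number by \eqref{ineq_gammaalphak1}), one has
\[
\la^{-\kappa}U(\la z)\longrightarrow |z|^{\kappa}Z(z/|z|)\quad\text{strongly in } H^1(B_1^+,y^{1-2s})\text{ as }\la\to 0^+.
\]
In particular the $L^2(B_1^+,y^{1-2s})$ norm of the limit profile is strictly positive.

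The key step is to show that the infinite-order vanishing assumption \eqref{eq_corollary_unique_continuation_extention} forces the same rescaled family to converge to $0$ in $L^2(B_1^+,y^{1-2s})$. I would pick an integer $n>\kappa$, use \eqref{eq_corollary_unique_continuation_extention} to bound, for every $\varepsilon>0$ and all sufficiently small $\la$, $|U(\la z)|\le\varepsilon\,\la^n|z|^n$ on $B_1^+$, and then estimate
\[
\int_{B_1^+}y^{1-2s}\big|\la^{-\kappa}U(\la z)\big|^2\,dz\le C\,\varepsilon^2\,\la^{2(n-\kappa)},
\]
which tends to $0$ since $n-\kappa>0$ and the weighted integral of $|z|^{2n}$ over $B_1^+$ is finite. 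By uniqueness of the $L^2(B_1^+,y^{1-2s})$ limit this forces $|z|^{\kappa}Z(z/|z|)\equiv 0$, contradicting $Z\not\equiv 0$. Hence $U\equiv 0$.

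The main obstacle is the interpretation of the pointwise decay hypothesis \eqref{eq_corollary_unique_continuation_extention} for a function $U$ that a priori only belongs to $H^1(C,y^{1-2s})$. This can be handled either by invoking interior regularity for the degenerate equation in \eqref{prob_extension_with_equation} (which is standard away from the singular set $\{|x|_k=0\}$ since $y^{1-2s}$ is a Muckenhoupt $A_2$ weight), or by reformulating the vanishing assumption in the equivalent integral form $r^{-2n-N-2+2s}\int_{B_r^+}y^{1-2s}U^2\,dz\to 0$ for every $n\in\mb{N}$; both versions feed directly into the weighted $L^2$ bound above and close the contradiction.
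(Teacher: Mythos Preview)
Your proposal is correct and follows essentially the same approach as the paper: both argue by contradiction, invoking Theorem~\ref{theorem_blow_up_extension_not_precise} to produce a nontrivial blow-up profile and then using the infinite-order vanishing hypothesis to force that profile to be zero. The only cosmetic difference is that the paper phrases the contradiction through pointwise a.e.\ convergence along a subsequence, whereas you go through the weighted $L^2(B_1^+,y^{1-2s})$ norm; your added remark on the interpretation of the pointwise hypothesis is a useful clarification that the paper leaves implicit.
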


\begin{corollary}\label{corollary_unique_continuation}
Let $u$ be a solution of \eqref{eq_weak_formulation_hardy_operator} and suppose that $g$ satisfies \eqref{hp_g}. If
\begin{equation}\label{eq_corollary_unique_continuation}
u(x)=o(|x|^n) \text{ as } x \to 0,  \quad  \text{ for any  } n \in \mathbb{N}
\end{equation}
then $u \equiv 0$ on $\Omega$.
\end{corollary}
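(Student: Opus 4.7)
The plan is to argue by contradiction using the blow-up analysis of Theorem \ref{theorem_blow_up_extension_not_precise} applied to the Caffarelli--Silvestre extension of $u$. Assume $u\not\equiv 0$ and let $U\in H_{0,L}^1(C,y^{1-2s})$ be the extension of $u$ produced by Corollary \ref{corollary_extension}; since $\Tr(U)=u$, one has $U\not\equiv 0$. By Theorem \ref{theorem_blow_up_extension_not_precise} there exist an eigenvalue $\gamma_{\alpha,k,n_0}$ of \eqref{prob_eigenvalue_sphere} and a non-trivial corresponding eigenfunction $Z$ such that, setting $\sigma:=-\frac{N-2s}{2}+\sqrt{\left(\frac{N-2s}{2}\right)^2+\gamma_{\alpha,k,n_0}}$,
\begin{equation*}
V_\la(z):=\la^{-\sigma}U(\la z)\to V(z):=|z|^{\sigma} Z(z/|z|)\qquad\text{strongly in } H^1(B_1^+,y^{1-2s})
\end{equation*}
as $\la\to 0^+$, with $V$ non-trivial.

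By continuity of the trace operator $H^1(B_1^+,y^{1-2s})\to H^s(B_1')\hookrightarrow L^2(B_1')$ (Proposition \ref{prop_trace} and Remark \ref{remark_trace}), the rescaled traces converge:
\begin{equation*}
\la^{-\sigma}u(\la\,\cdot)=\Tr(V_\la)\to \Tr(V)\qquad\text{in }L^2(B_1').
\end{equation*}
On the other hand, the hypothesis $u(x)=o(|x|^n)$ for every $n\in\N$ together with the change of variables $y=\la x$ yields
\begin{equation*}
\norm{\la^{-\sigma}u(\la\,\cdot)}_{L^2(B_1')}^2=\la^{-2\sigma-N}\int_{B_\la'}u^2\,dx=o(\la^{2(n-\sigma)}),
\end{equation*}
which tends to $0$ as $\la\to 0^+$ for any $n>\sigma$. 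Hence $\Tr(V)\equiv 0$ on $B_1'$, and the $\sigma$-homogeneity of $V$ upgrades this to $\Tr(V)\equiv 0$ on all of $\R^N\setminus\{0\}$, i.e. $Z\equiv 0$ on $\mb{S}'$.

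To close the argument one must show that $Z$ cannot be a non-trivial eigenfunction of \eqref{prob_eigenvalue_sphere} vanishing on $\mb{S}'$. Since $Z$ also satisfies the Neumann-type condition $-\lim_{\theta_{N+1}\to 0^+}\theta_{N+1}^{1-2s}\nabla_{\mb{S}} Z\cdot\nu=0$ there, its $\sigma$-homogeneous lift $V$ fulfils both $V=0$ and $\lim_{y\to 0^+}y^{1-2s}\p_y V=0$ on $\R^N\setminus\{0\}$; an odd reflection across $\{y=0\}$ then yields a non-trivial weak solution of $-\dive(|y|^{1-2s}\nabla \overline{V})=|y|^{1-2s}\frac{\alpha}{|x|_k^2}\overline{V}$ in the whole $\R^{N+1}$ that vanishes identically on the hyperplane $\{y=0\}$, contradicting a unique continuation principle for this weighted elliptic equation with a Hardy-type potential. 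I expect this final step to be the main obstacle, since unique continuation from a hyperplane in presence of the Muckenhoupt weight $|y|^{1-2s}$ and the singular potential $\alpha|x|_k^{-2}$ is delicate. A cleaner alternative would be to derive Corollary \ref{corollary_unique_continuation} directly from Corollary \ref{corollary_unique_continuation_extention} by propagating the infinite-order vanishing of $u$ to $U$ via the Almgren monotonicity formula of Section \ref{sec_monotonicity}, thereby removing any appeal to hyperplane unique continuation.
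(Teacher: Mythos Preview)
Your argument is essentially the paper's own proof: the paper derives Corollary~\ref{corollary_unique_continuation} directly from Theorem~\ref{theorem_blow_up_not_precise} (equivalently, from Theorem~\ref{theorem_blow_up_extension_not_precise} after taking traces, as you do), reducing everything to the claim that a non-trivial eigenfunction $Z$ of \eqref{prob_eigenvalue_sphere} cannot vanish identically on $\mb{S}'$. So the structure is right.

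The step you flag as the ``main obstacle'' is exactly the content of Remark~\ref{reamrk_eigenfunction0}, and the paper's resolution is simpler than the one you sketch. You do not need a global unique continuation principle in the presence of the singular potential $\alpha|x|_k^{-2}$. The point is that the singular set $\Sigma_k=\{z\in\R^{N+1}:|x|_k=0\}$ has Lebesgue measure zero, and \emph{away from} $\Sigma_k$ the potential $\alpha|x|_k^{-2}$ is locally bounded. Hence on $\R^{N+1}_+\setminus\Sigma_k$ the homogeneous lift $V(z)=|z|^{\sigma}Z(z/|z|)$ solves a degenerate elliptic equation with Muckenhoupt weight $y^{1-2s}$ and \emph{bounded} zero-order coefficient, satisfying both zero Dirichlet and zero Neumann data on $\R^N\times\{0\}$; the unique continuation results of \cite{TZ} for such equations then force $V\equiv 0$ on the connected set $\R^{N+1}_+\setminus\Sigma_k$, hence everywhere. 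No odd reflection or Hardy-potential-adapted UC is needed.

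Your proposed alternative---propagating the infinite-order vanishing from $u$ to its extension $U$ and then invoking Corollary~\ref{corollary_unique_continuation_extention}---is not how the paper proceeds and would require extra work (it is not immediate that $u(x)=o(|x|^n)$ forces $U(z)=o(|z|^n)$). The direct route through Remark~\ref{reamrk_eigenfunction0} is cleaner.
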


\begin{remark}
We have considered equation \eqref{eq_fractional_hardy} with assumption \eqref{hp_g} on the potential $g$ for the sake of simplicity. With simple modifications to our arguments it is also possible 
to obtain the same results for a potential  $g \in W^{\frac{N}{2s}+\e}(\Omega)$ for some $\e\in (0,1)$, see \cite[Proposition 2.3]{FSlincei} for the corresponding Pohozaev identity. Furthermore we can obtain analogous results for the more general equation 
\begin{equation}\label{eq_frac_with_2hardy}
L^s_{k,\alpha}u=\frac{\la}{|x|^{2s}}u+gu,
\end{equation}
with $\la \in \left(-\infty, 2^{2s}\frac{\Gamma^2\left(\frac{N+2s}{4}\right)}{\Gamma^2\left(\frac{N-2s}{4}\right)}\right)$ with the same approach, where $\Gamma$ is the usual $\Gamma$-function.
\end{remark}

\section{Preliminaries} \label{sec_inequalities_and_extension}
We start this section by proving Theorem \ref{theorem_extension}.

\begin{proof}[\textbf{Proof of Theorem \ref{theorem_extension}.}] 
We follow the proof of \cite[Proposition 2.1]{CDDS}. Let $v \in \mb{H}^s(\Omega)$ and consider 
\begin{equation}\label{dim_extension:1}
V(x,y):=\sum_{n=1}^\infty v_nY_{\alpha,k,n}(x)h_n(y), \quad \text{ where } v_n=\int_{\Omega} v Y_{\alpha,k,n} \, dx
\end{equation} 
and  $h_n:(0,+\infty) \to \R$ is a solution to the problem 
\begin{equation}\label{dim_extension:2}
\begin{cases}
h_n''+\frac{1-2s}{y}h_n'-\mu_{\alpha,k,n}h_n=1, \text{ on } (0,+\infty),\\
h_n(0)=1, \\
\lim_{y \to \infty } h_n(y)=0.
\end{cases}	
\end{equation}
From the proof of \cite[Proposition 2.1]{CDDS},  \eqref{dim_extension:2} admits a  unique solution $h_n$ for any $n \in \mb{N}\setminus\{0\}$ and 
\begin{equation}\label{dim_extension:3}
-\lim_{y \to 0^+}y^{1-2s}h'_n(y)= c_{N,s}\mu^s_{\alpha,k,n},
\end{equation}
for some positive constant $c_{N,s}>0$ depending only on $N$ and $s$.
Furthermore for any $y \in [0+,\infty)$ by \eqref{dim_extension:1} and  Remark \ref{remark_equivalence_norm}  
\begin{multline}\label{dim_extension:4}
\int_{\Omega} \left|\pd{V}{y}(x,y)\right|^2 \, dx+	\int_{\Omega} |\nabla_x V(x,y)|^2 \, dx
-\int_{\Omega} \frac{\alpha}{|x|^2_k}V^2(x,y) \, dx\\
=\sum_{n=1}^{\infty} v_n^2(h'_n(y))^2+\mu_{\alpha,k,n}v_n^2h_n(y)^2.
\end{multline}
Proceeding exactly as in   \cite[Proposition 2.1]{CDDS} we can show that \eqref{eq_norm_V_norm_v} holds. Hence, in view of  \eqref{ineq_hardy_extened}, $V \in H^1(C,y^{1-2s})$ and   
$\sum_{n=1}^j v_nY_{\alpha,k,n}(x)h_n(y) \to V$ in $H^1(C,y^{1-2s})$ as $j \to \infty$. In conclusion $V \in H^1_{0,L}(C,y^{1-2s})$ since $\sum_{n=1}^j v_nY_{\alpha,k,n}(x)h_n(y) \in H_{0,L}^1(C,y^{1-2s})$ for any $j \in \mathbb{N}, j \ge 1$.

In contrast to  \cite[Proposition 2.1]{CDDS}, $V$ might not be  smooth  for $y>0$  since the functions $Y_{\alpha,k,n}$ might not be smooth on $\Omega$. Then we prove that $V$ satisfies 
\eqref{prob_extension} in the weak sense given by \eqref{eq_extension}.
Let $\phi \in C_c^{\infty}(\Omega \times [0,+\infty))$. Then 
\begin{equation}\label{dim_extension:5}
\phi(x,y)=\sum_{n=1}^\infty \phi_n(y)Y_{\alpha,k,n}(x), \quad \text{ where } \phi_n(y):=\int_{\Omega} \phi(x,y) Y_{\alpha,k,n}(x) \, dx,
\end{equation}
and similarly to \eqref{dim_extension:4}
\begin{equation}\label{dim_extension:5.1}
\int_{\Omega} |\nabla \phi(x,y)|^2 \, dx	-\int_{\Omega} \frac{\alpha}{|x|^2_k}\phi^2(x,y) \, dx=\sum_{n=1}^{\infty}(\phi'_n(y))^2+\mu_{\alpha,k,n}\phi_n(y)^2.
\end{equation}
Then by \eqref{dim_extension:1} and  Remark \ref{remark_equivalence_norm}
\begin{multline}\label{dim_extension:6}
\int_{\Omega} \nabla V(x,y) \cdot \nabla \phi(x,y)\, dx-\int_{\Omega} \frac{\alpha}{|x|^2_k}V(x,y)\phi(x,y) \, dx\\
=\sum_{n=1}^{\infty} v_nh'_n(y)\phi'_n(y)+\mu_{\alpha,k,n}v_nh_n(y)\phi_n(y).
\end{multline}
Furthermore,  for any $j \in \mb{N}$, by H\"older's  inequality
\begin{multline}\label{dim_extension:7}
\left|\int_{0}^{+\infty} y^{1-2s}\left(\sum_{n=j}^{\infty} v_nh'_n(y)\phi'_n(y)+\mu_{\alpha,k,n}v_nh_n(y)\phi_n(y) \right)\, dy\right| \\
\le \frac{1}{2} \int_{0}^{+\infty} y^{1-2s}\left(\sum_{n=j}^{\infty}  v_n^2(h'_n(y))^2+\mu_{\alpha,k,n}v_n^2h_n(y)^2  \right)\, dy\\
+\frac{1}{2}  \int_{0}^{+\infty} y^{1-2s}\left(\sum_{n=j}^{\infty}  (\phi'_n(y))^2+\mu_{\alpha,k,n}\phi_n(y)^2  \right)\, dy.
\end{multline}
By \eqref{dim_extension:4}, \eqref{dim_extension:5.1}  and the Monotone Convergence Theorem we conclude that 
\begin{equation}\label{dim_extension:8}
\lim_{j \to \infty}\int_{0}^{\infty} y^{1-2s}\left(\sum_{n=j}^{\infty} v_nh'_n(y)\phi'_n(y)+\mu_{\alpha,k,n}v_nh_n(y)\phi_n(y) \right)\, dy=0. 
\end{equation}
Hence we may change the order of summation and integration  in \eqref{dim_extension:6} obtaining 
\begin{equation}\label{dim_extension:9}
\int_{C} y^{1-2s}\left(\nabla V\cdot\nabla \phi - \frac{\alpha}{|x|^2_k}V\phi\right) \, dz=\sum_{n=1}^{\infty}v_n\int_{0}^{+\infty}y^{1-2s}(h'_n(y)\phi'_n(y)+\mu_{\alpha,k,n}h_n(y)\phi_n(y)) \, dy.
\end{equation}
An integration by parts, in view of \eqref{dim_extension:2} and \eqref{dim_extension:3}, yields
\begin{equation}\label{dim_extension:10}
\int_{0}^{+\infty}y^{1-2s}(h'_n(y)\phi'_n(y)+\mu_{\alpha,k,n}h_n(y)\phi_n(y)) \, dy=c_{N,s}\mu_{\alpha,k,n}^s\phi_n(0).
\end{equation}
It follows that 
\begin{equation}\label{dim_extension:11}
\int_{C} y^{1-2s}\nabla V\cdot\nabla \phi \, dz	-\int_{C}y^{1-2s} \frac{\alpha}{|x|^2_k}V\phi \, dz=c_{N,s}\sum_{n=1}^{\infty}\mu_{\alpha,k,n}^s v_n\phi_n(0)
\end{equation}
and so we have proved \eqref{eq_extension}.
If $V_1$ and $V_2$ solve \eqref{prob_extension} then by \eqref{hp_alpha}, \eqref{eq_extension} and \eqref{ineq_hardy_extened} we deduce that 
\begin{equation}\label{dim_extension:12}
\int_{C} y^{1-2s}|\nabla (V_1-V_2)|^2  \, dz=0, \quad \text{ and } \quad \Tr(V_1-V_2)=0
\end{equation}
thus $V_1=V_2$.
Finally $V$ solves the minimizing problem \eqref{def_min_prob} in view of  \eqref{eq_extension} and a density argument. 
\end{proof}

By \cite{FF} and \cite[Theorem 19.7]{OK} we have the following result.
\begin{proposition}\label{prop_trace_Sr+}
For any $r >0$ there exists a linear and continuous trace operator 
\begin{equation}\label{eq_trace_Sr+}
\mathop{{\rm{Tr}}_{S^{+}_r}}:H^1(B_r^+,y^{1-2s}) \to L^2(B_r^+,y^{1-2s})
\end{equation}
which is also compact. 
\end{proposition}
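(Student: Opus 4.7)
The plan is to first establish linearity and continuity of the trace on smooth functions via a polar-coordinate computation, extend by density to all of $H^1(B_r^+,y^{1-2s})$, and then obtain compactness by decomposing $S_r^+$ into a part bounded away from the equator $\{y=0\}\cap S_r^+$ and a neighbourhood of it. (I will interpret the target space in the statement as $L^2(S_r^+,y^{1-2s})$, since the subscript $S_r^+$ on $\Tr$ indicates that the trace is taken on the hemisphere.)

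For the continuity, I would work in polar coordinates $z=\rho\theta$ with $\rho\in(0,r]$ and $\theta\in\mb{S}^+$, and set
\begin{equation*}
\phi(\rho):=\int_{\mb{S}^+}\theta_{N+1}^{1-2s}V(\rho\theta)^2\,d\sigma_\theta,\qquad
\psi(\rho):=\int_{\mb{S}^+}\theta_{N+1}^{1-2s}|\nabla V(\rho\theta)|^2\,d\sigma_\theta.
\end{equation*}
Then $\int_{S_\rho^+}y^{1-2s}V^2\,d\sigma=\rho^{N+1-2s}\phi(\rho)$, while $\|V\|_{H^1(B_r^+,y^{1-2s})}^2$ controls $\int_0^r\rho^{N+1-2s}(\phi+\psi)\,d\rho$. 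A Lipschitz cutoff $\eta$ with $\eta(r)=1$ and $\eta(r/2)=0$, combined with the fundamental theorem of calculus and Cauchy--Schwarz applied to $\phi'(\rho)=2\int_{\mb{S}^+}\theta_{N+1}^{1-2s}V(\rho\theta)\,(\nabla V)(\rho\theta)\cdot\theta\,d\sigma_\theta$, would yield $r^{N+1-2s}\phi(r)\le C(r,N,s)\|V\|_{H^1(B_r^+,y^{1-2s})}^2$ for $V\in C^\infty(\overline{B_r^+})$. Density of smooth functions in $H^1(B_r^+,y^{1-2s})$ then delivers the continuous linear extension.

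For compactness, given a sequence $(V_n)$ bounded in $H^1(B_r^+,y^{1-2s})$, I would fix $\delta>0$ and decompose $S_r^+=A_\delta\cup B_\delta$ with $A_\delta=S_r^+\cap\{y>\delta\}$ and $B_\delta=S_r^+\cap\{y\le\delta\}$. On $A_\delta$ the weight $y^{1-2s}$ is smooth and bounded from above and below, so the classical trace operator from $H^1(B_r^+\cap\{y>\delta/2\})$ into $L^2(A_\delta)$ is compact by Rellich--Kondrachov, yielding a subsequence converging in $L^2(A_\delta,y^{1-2s})$. On $B_\delta$ I would invoke \cite[Theorem~19.7]{OK}, which, together with the Muckenhoupt $A_2$ character of $y^{1-2s}$ on $\R^{N+1}$ and a weighted Hardy-type trace inequality, provides an estimate of the form
\begin{equation*}
\int_{B_\delta}y^{1-2s}V^2\,d\sigma\le\omega(\delta)\,\|V\|_{H^1(B_r^+,y^{1-2s})}^2
\end{equation*}
with $\omega(\delta)\to 0$ as $\delta\to 0^+$. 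A diagonal extraction along $\delta=1/j$ would then produce a subsequence converging in $L^2(S_r^+,y^{1-2s})$.

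The hard part will be the estimate on $B_\delta$: the trace has to be controlled in a neighbourhood of the equator, where $y^{1-2s}$ either degenerates (when $s<1/2$) or blows up (when $s>1/2$), so a naive truncation argument on $V$ is not sufficient. The compactness ultimately rests on the weighted Sobolev and trace machinery of \cite{OK}, invoked through the $A_2$ structure of the weight, which is precisely why \cite[Theorem~19.7]{OK} (together with the trace construction of \cite{FF}) is the key input.
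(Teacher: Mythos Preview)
The paper does not give a proof at all: it simply cites \cite{FF} and \cite[Theorem~19.7]{OK} for the result. So the comparison is really between your sketch and what those references actually supply.

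Your continuity argument is fine and standard; the polar-coordinate/cutoff/FTC computation does yield the bound $\int_{S_r^+}y^{1-2s}V^2\,dS\le C\|V\|_{H^1(B_r^+,y^{1-2s})}^2$.

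For compactness, the $A_\delta/B_\delta$ decomposition is a legitimate strategy, but your justification of the key estimate on $B_\delta$ has a gap. \cite[Theorem~19.7]{OK} is a compact-embedding theorem for weighted Sobolev spaces into weighted $L^2$ on the \emph{same} domain; it does not hand you a localized trace bound of the form $\int_{B_\delta}y^{1-2s}V^2\,d\sigma\le\omega(\delta)\|V\|_{H^1}^2$ with $\omega(\delta)\to 0$. That uniform tightness near the equator is in fact \emph{equivalent} to the compactness you are trying to prove, so invoking it begs the question unless you supply an independent argument, and the ``weighted Hardy-type trace inequality'' you allude to is left unspecified.

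A shorter route, which is presumably what the paper has in mind, avoids the sphere decomposition entirely. Re-run your own continuity computation with the radial cutoff $\eta$ supported in $[r-\epsilon,r]$: this yields the Ehrling-type inequality
\[
\int_{S_r^+}y^{1-2s}V^2\,dS\;\le\; c\,\epsilon\int_{B_r^+}y^{1-2s}|\nabla V|^2\,dz\;+\;C_\epsilon\int_{B_r^+}y^{1-2s}V^2\,dz
\]
for every $\epsilon\in(0,r/2)$. Now combine this with the compact embedding $H^1(B_r^+,y^{1-2s})\hookrightarrow L^2(B_r^+,y^{1-2s})$, which \emph{is} exactly what \cite[Theorem~19.7]{OK} provides for $A_2$ weights. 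If $V_n\rightharpoonup V$ in $H^1$, then $V_n\to V$ in $L^2$, and the Ehrling inequality forces $\mathrm{Tr}_{S_r^+}V_n\to\mathrm{Tr}_{S_r^+}V$ in $L^2(S_r^+,y^{1-2s})$. This uses the cited references in the right roles and sidesteps the unproved $B_\delta$ estimate.
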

For the sake of simplicity, we will  write $V$ instead of $\mathop{{\rm{Tr}}_{S^{+}_r}}(V)$ on $S_r^+$.

\begin{remark}\label{remark_int_nabla_with_coarea}
For any  $r>0$ and any $V,W \in H^1(B_r^+,y^{1-2s})$, thanks to the Coarea Formula,
\begin{equation}\label{eq_int_nabla_with_coarea}
\int_{B_{r}^+} \left|y^{1-2s} \nabla U\cdot\frac{z}{|z|} W \right|dz =\int_0^{r}\left(\int_{S_\rho^+} \left|y^{1-2s} \nabla U\cdot \frac{z}{\rho} W\right| dS \right) \, d\rho
\end{equation}
hence the function $f(\rho):=\int_{S_\rho^+} \left|y^{1-2s} \nabla U\cdot \frac{z}{\rho} W\right| dS$ is a well-defined element of $L^1(0,{r})$. In particular a.e. $\rho \in (0,{r})$ is a Lebesgue point of $f$.
\end{remark}

Reasoning as in  \cite[Lemma 3.1]{FF} or\cite[Proposition 3.7]{FSlincei} we can prove the following.
\begin{proposition} \label{prop_extension_with_boundary}
Let $U$ be a solution of \eqref{eq_extension_with_equation}. For a.e. $r>0$ such that $\overline{B_r'} \subset \Omega$ and any $W \in H^1(B_r^+,y^{1-2s})$ 
\begin{multline}\label{eq_extension_with_boundary}
\int_{B_r^+} y^{1-2s} \left(\nabla U\cdot\nabla W -\frac{\alpha}{|x|_k^2}UW \right)\, dz\\
=\frac{1}{r}\int_{S_r^+} y^{1-2s} \nabla U\cdot z  \,W \, dS+c_{N,s}\int_{B_r'} g \Tr(U)\Tr(W) \, dx.
\end{multline}
\end{proposition}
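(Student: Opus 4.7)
The plan is to test the weak formulation \eqref{eq_extension_with_equation} with a radial cutoff of a smooth approximation of $W$ localised inside $B_r^+$, and pass to the limit, recovering the boundary term on $S_r^+$ via the Coarea formula of Remark \ref{remark_int_nabla_with_coarea}.

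First I would fix a radial smooth cutoff $\eta_\delta(z)=\chi_\delta(|z|)$ with $\chi_\delta\in C^\infty([0,\infty))$, $\chi_\delta\equiv 1$ on $[0,r-\delta]$, $\chi_\delta\equiv 0$ on $[r,\infty)$, and $\chi_\delta'$ supported in $[r-\delta,r]$ with $|\chi_\delta'|\le 2/\delta$. For $W\in C^\infty_c(\Omega\times[0,\infty))$ the product $\eta_\delta W$ is admissible in \eqref{eq_extension_with_equation}. Expanding $\nabla(\eta_\delta W)=\eta_\delta\nabla W + \chi_\delta'(|z|)\frac{z}{|z|} W$ yields
\begin{multline*}
\int_C y^{1-2s}\eta_\delta\!\left(\nabla U\cdot\nabla W-\frac{\alpha}{|x|_k^2}UW\right)dz + \int_C y^{1-2s}\chi_\delta'(|z|)\,W\,\nabla U\cdot\tfrac{z}{|z|}\,dz \\
= c_{N,s}\int_\Omega g\,u\,\eta_\delta W\,dx.
\end{multline*}

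Next I would let $\delta\to 0^+$. By dominated convergence (using \eqref{ineq_hardy_extened} for the Hardy part and \eqref{ineq_hardy_Hs} together with \eqref{hp_g} for the $gu$ part), the terms containing $\eta_\delta$ without its derivative converge to the corresponding integrals over $B_r^+$ or $B_r'$. The $\chi_\delta'$ term, by the Coarea formula of Remark \ref{remark_int_nabla_with_coarea}, equals
\begin{equation*}
\int_0^\infty\chi_\delta'(\rho)\left(\int_{S_\rho^+} y^{1-2s}\,\nabla U\cdot\tfrac{z}{\rho}\,W\,dS\right)d\rho,
\end{equation*}
which, by Lebesgue differentiation, converges to $-\frac{1}{r}\int_{S_r^+} y^{1-2s}\nabla U\cdot z\,W\,dS$ at every Lebesgue point $r$ of the $L^1_{\mathrm{loc}}$ function $\rho\mapsto\int_{S_\rho^+} y^{1-2s}\nabla U\cdot\tfrac{z}{\rho} W\,dS$. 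Rearranging yields the identity for smooth compactly supported $W$.

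To reach a general $W\in H^1(B_r^+,y^{1-2s})$ I would approximate $W$ by smooth functions with compact support in $\Omega\times[0,\infty)$, using an $H^1(C,y^{1-2s})$-extension in the spirit of Remark \ref{remark_trace} followed by mollification. Continuity of every bilinear form in $W$ with respect to the $H^1(B_r^+,y^{1-2s})$-norm is provided by Proposition \ref{prop_trace_Sr+} (for the integral on $S_r^+$), by Remark \ref{remark_trace} and \eqref{ineq_hardy_Hs} (for the $gu$ term on $B_r'$, combined with \eqref{hp_g}), and by H\"older and \eqref{ineq_hardy_extened} for the remaining bulk terms.

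The main obstacle is that the \emph{a.e.\ $r$} set has to be chosen independently of $W$. I would handle this by a Cauchy--Schwarz splitting on $S_\rho^+$, so that the integrand is majorised by a product of $\rho\mapsto\int_{S_\rho^+} y^{1-2s}|\nabla U|^2\,dS$ (which is $L^1_{\mathrm{loc}}$ by Coarea applied to $U$ alone) and an analogous quantity for $W$ controlled by $\|W\|_{H^1(B_r^+,y^{1-2s})}$; the Lebesgue points of the former give a $W$-independent set of admissible radii, and a parallel argument handles the boundary integral on $B_r'$ using \eqref{hp_g} and \eqref{ineq_hardy_Hs}.
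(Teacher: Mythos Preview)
Your approach---radial cut-off, coarea, Lebesgue differentiation, then density in $W$---is the standard one and is exactly what the paper is invoking when it refers to \cite[Lemma 3.1]{FF} and \cite[Proposition 3.7]{FSlincei}; there is no substantive difference.

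One point in your last paragraph deserves sharpening. The Cauchy--Schwarz splitting
\[
\Big|\int_{S_\rho^+} y^{1-2s}\,\nabla U\cdot\tfrac{z}{\rho}\,W\,dS\Big|
\le\Big(\int_{S_\rho^+} y^{1-2s}|\nabla U|^2\,dS\Big)^{1/2}\Big(\int_{S_\rho^+} y^{1-2s}W^2\,dS\Big)^{1/2}
\]
gives you \emph{continuity} of $W\mapsto \frac{1}{r}\int_{S_r^+}y^{1-2s}\nabla U\cdot z\,W\,dS$ at radii where $|\nabla U|\in L^2(S_r^+,y^{1-2s})$, but it does not show that Lebesgue points of $\rho\mapsto\int_{S_\rho^+}y^{1-2s}|\nabla U|^2\,dS$ are automatically Lebesgue points of $f_W$ for \emph{every} $W$; majorisation controls $|f_W(\rho)|$, not $|f_W(\rho)-f_W(r)|$. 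The clean fix---and what the cited references actually do---is to run the cut-off argument for a fixed \emph{countable} family of smooth $W$'s whose restrictions are dense in $H^1(B_r^+,y^{1-2s})$ for every $r$, intersect the resulting full-measure sets of good radii (together with $\{r:|\nabla U|\in L^2(S_r^+,y^{1-2s})\}$), and then extend to general $W$ by the continuity you already established via Proposition~\ref{prop_trace_Sr+}, Remark~\ref{remark_trace}, and the Cauchy--Schwarz bound above.
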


\subsection{An Eigenvalue Problem on $\mb{S}^+$}\label{subsec_an_eigenvalue_problem_on_S+}
In this section we provide a variational formulation of problem \eqref{prob_eigenvalue_sphere}. To this end 
we consider  the space 
\begin{equation}\label{L2S^+}
L^2(\mb{S}^+,\theta_{N+1}^{1-2s}):=\{\Psi:\mb{S}^+ \to \R \text{ measurable: } \int_{\mb{S}^+}\theta_{N+1}^{1-2s} \Psi^2 \, dS <+\infty\},
\end{equation}
and  the space $H^1(\mb{S}^+,\theta_{N+1}^{1-2s})$ defined as the completion of $C^{\infty}(\overline{\mb{S}^+})$ with respect to the norm
\begin{equation}\label{H1S+norm}
\norm{\phi}_{H^1(\mb{S}^+,\theta_{N+1}^{1-2s})}:=\left(\int_{\mb{S}^+}\theta_{N+1}^{1-2s}( \phi^2+|\nabla_{\mb{S}} \phi|^2 )\, dS\right)^{1/2},
\end{equation}
where $\nabla_{\mb{S}}$ is the Riemannian gradient respect to the standard metric on $\mb{S}$.
\begin{proposition}\label{prop_ineq_hardy_sphere}
For any $k \in \{3,\dots,N\}$ 
\begin{equation}\label{ineq_hardy_sphere}
\left(\frac{k-2}{2}\right)^2\int_{\mb{S}^+}\theta_{N+1}^{1-2s}\frac{\Psi^2}{|\theta|_k^{2}} \, dS
\le \left(\frac{N-2s}{2}\right)^2\int_{\mb{S}^+}\theta_{N+1}^{1-2s}|\Psi|^2 \, dS+\int_{\mb{S}^+}\theta_{N+1}^{1-2s}|\nabla_{\mb{S}} \Psi|^2 \, dS
\end{equation}
for any $\Psi \in H^1(\mb{S}^+,\theta_{N+1}^{1-2s})$ .
\end{proposition}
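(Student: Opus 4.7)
The plan is to reduce the sought inequality to the ambient weighted Hardy inequality \eqref{ineq_hardy_extened} by testing against a function that separates into a radial and a spherical factor, and then to optimize over the radial factor via a classical one-dimensional Hardy inequality.

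By density of $C^\infty(\overline{\mb{S}^+})$ in $H^1(\mb{S}^+,\theta_{N+1}^{1-2s})$ it suffices to establish \eqref{ineq_hardy_sphere} for smooth $\Psi$. First I would extend such a $\Psi$ to $\R^{N+1}_+\setminus\{0\}$ as a zero-homogeneous function $\widetilde{\Psi}(z):=\Psi(z/|z|)$ and, for any radial profile $f\in C^\infty_c((0,\infty))$, consider the test function
$$\phi(z):=f(|z|)\,\widetilde{\Psi}(z).$$
Since $f$ is supported in a compact subset of $(0,\infty)$, the function $\phi$ is supported in an annular region of $\R^{N+1}_+$ bounded away from the origin and thus lies in $C^\infty_c(\R^N\times[0,\infty))$. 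Hence \eqref{ineq_hardy_extened} applies, and combining it with the trivial bound $|\nabla_x\phi|^2\le|\nabla\phi|^2$ yields the analogous inequality with the full gradient on the right-hand side.

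Passing to polar coordinates $z=r\theta$ with $r=|z|$, $\theta\in\mb{S}$, and using $y=r\theta_{N+1}$, $|x|_k=r|\theta|_k$ together with the decomposition
$$|\nabla\phi|^2=|f'(r)|^2\Psi(\theta)^2+\frac{f(r)^2}{r^2}|\nabla_{\mb{S}}\Psi(\theta)|^2,$$
each of the three integrals factors into a product of a radial integral and one of the spherical integrals appearing in \eqref{ineq_hardy_sphere}. Dividing through by the common radial factor $\int_0^\infty r^{N-1-2s}f^2\,dr$ gives
$$\left(\tfrac{k-2}{2}\right)^2 A\le R(f)\,B+C,$$
where $A$, $B$, $C$ denote the three spherical integrals in \eqref{ineq_hardy_sphere} and
$$R(f):=\frac{\int_0^\infty r^{N+1-2s}|f'|^2\,dr}{\int_0^\infty r^{N-1-2s}f^2\,dr}.$$

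It then remains to show that $\inf_f R(f)=\left(\tfrac{N-2s}{2}\right)^2$ as $f$ ranges over $C^\infty_c((0,\infty))$. The extremal profile $r^{-(N-2s)/2}$ is not admissible, so I would truncate it as $f_n(r):=r^{-(N-2s)/2}\chi(n^{-1}\log r)$ for a fixed $\chi\in C^\infty_c(\R)$; the substitution $t=\log r$ then reveals $R(f_n)=\left(\tfrac{N-2s}{2}\right)^2+O(n^{-2})$. Letting $n\to\infty$ produces \eqref{ineq_hardy_sphere} for smooth $\Psi$, and density closes the argument. The main (but standard) point is constructing this approximating sequence and verifying that its Rayleigh quotient converges to the optimal half-line constant; everything else is the separation-of-variables bookkeeping above.
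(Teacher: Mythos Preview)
Your proof is correct and follows essentially the same route as the paper: separate variables with $\phi(z)=f(|z|)\Psi(z/|z|)$, apply the ambient Hardy inequality \eqref{ineq_hardy_extened}, pass to polar coordinates, and optimize over the radial profile $f$. The only cosmetic difference is that the paper dispatches the optimization step by citing the optimal constant in the one-dimensional weighted Hardy inequality (Hardy--Littlewood--P\'olya), whereas you construct an explicit minimizing sequence $f_n(r)=r^{-(N-2s)/2}\chi(n^{-1}\log r)$; both arguments yield $\inf_f R(f)=\left(\tfrac{N-2s}{2}\right)^2$.
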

\begin{proof}
Let $\phi \in C^\infty(\overline{\mb{S}^+})$, $f \in C_c^\infty((0,+\infty))$ with $f \neq 0$, and   $V(z):=V(r \theta)=\phi(\theta)f(r)$.
From \eqref{ineq_hardy_extened}  we obtain, passing in polar coordinates,
\begin{multline}\label{dim_ineq_hardy_sphere:1}
\left(\frac{k-2}{2}\right)^2\left(\int_{0}^{\infty} r^{N-1-2s} f^2(r) \, dr \right)\left(\int_{\mb{S}^+}\theta_{N+1}^{1-2s}\frac{\phi^2}{|\theta|_k^{2}} \, dS\right) \\
\le \left(\int_{0}^{\infty} r^{N+1-2s} |f'(r)|^2 \, dr \right)\left(\int_{\mb{S}^+}\theta_{N+1}^{1-2s}\phi^2 \, dS\right)\\
+\left(\int_{0}^{\infty} r^{N-1-2s} f^2(r) \, dr \right)\left(\int_{\mb{S}^+}\theta_{N+1}^{1-2s}|\nabla_{\mb{S}} \phi|^2 \, dS\right)
\end{multline}
and so, thanks to the optimality of the classical Hardy constant, see \cite[Theorem 330]{HLP}, 
\begin{multline}\label{dim_ineq_hardy_sphere:2}
\left(\frac{k-2}{2}\right)^2\left(\int_{\mb{S}^+}\theta_{N+1}^{1-2s}\frac{\phi^2}{|\theta|_k^{2}} \, dS\right) \\
\le \inf_{f \in C_c^\infty((0,+\infty)), f \neq 0 } \frac{\int_{0}^{\infty} r^{N+1-2s} |f'(r)|^2 \, dr }{\int_{0}^{\infty} r^{N-1-2s} f(r)^2 \, dr}
\left(\int_{\mb{S}^+}\theta_{N+1}^{1-2s}\phi^2 \, dS\right)	+\int_{\mb{S}^+}\theta_{N+1}^{1-2s}|\nabla_{\mb{S}} \phi|^2 \, dS\\
=\left(\frac{N-2s}{2}\right)^2\int_{\mb{S}^+}\theta_{N+1}^{1-2s}|\phi|^2 \, dS+\int_{\mb{S}^+}\theta_{N+1}^{1-2s}|\nabla_{\mb{S}} \phi|^2 \, dS.
\end{multline}
In conclusion \eqref{ineq_hardy_sphere} follows by density.
\end{proof}

For any $k \in \{3,\dots,N\}$ and $\alpha$ as in \eqref{hp_alpha}, we say that $\gamma$ is an eigenvalue of \eqref{prob_eigenvalue_sphere} if there exists a function $Z \in  H^1(\mb{S}^+,\theta_{N+1}^{1-2s})\setminus\{0\}$ such that 
\begin{equation}\label{eq_egienvalue_sphere}
\int_{\mb{S}^+}\theta_{N+1}^{1-2s}\nabla_{\mb{S}} Z\cdot \nabla_{\mb{S}} \Psi \, dS-\int_{\mb{S}^+}\theta_{N+1}^{1-2s}\frac{\alpha }{|\theta|_k^{2}}Z\Psi  \, dS
= \gamma\int_{\mb{S}^+}\theta_{N+1}^{1-2s}Z\Psi  \, dS,
\end{equation}
for any $\Psi \in H^1(\mb{S}^+,\theta_{N+1}^{1-2s})$. By \eqref{hp_alpha}, \eqref{ineq_hardy_sphere}, the Spectral Theorem, and the compactness of the  embedding 
$H^1(\mb{S^+},\theta_{N+1}^{1-2s}) \hookrightarrow L^2(\mb{S^+},\theta_{N+1}^{1-2s})$ (see \cite[Theorem 19.7]{OK}) the eigenvalues of \eqref{prob_eigenvalue_sphere} 
are a non-decreasing and diverging sequence $\{\gamma_{\alpha,k,n}\}_{n \in \mb{N}\setminus \{0\}}$ (we repeat each eigenvalue according to its multiplicity).
Let,  for future reference,  
\begin{align}
	&V_{\alpha,k,n} \text{ be the eigenspace of problem \eqref{prob_eigenvalue_sphere} associated to the eigenvalue } \gamma_{\alpha,k,n},\label{def_eigenspace}\\
	&M_{\alpha,k,n} \text{ be the dimension of } V_{\alpha,k,n}, \label{def_dimension-eigenspace}	\\
	&\{Z_{\alpha,k,n,i}: i \in \{1, \dots,M_{\alpha,k,n}\}\} \text{ be a $L^2(\mb{S^+},\theta_{N+1}^{1-2s})$ orthonormal basis of } V_{\alpha,k,n} \label{def_basis_eigenfunction}\\
	&\text{ of eigenfunctions of problem  \eqref{prob_eigenvalue_sphere}}. 
\end{align}
Finally  $\{Z_{\alpha,k,n}\}_{n \in \mathbb{N}\setminus\{0\}}:=\bigcup_{n=1}^\infty\{Z_{\alpha,k,n,i}: i \in \{1, \dots,M_{\alpha,k,n}\}\}$ is an orthonormal basis of $L^2(\mb{S^+},\theta_{N+1}^{1-2s})$.

\begin{remark}\label{reamrk_eigenfunction0}
It is worth noticing that  $Z_{\alpha,k,n}$ cannot vanish identically  on $\mb{S}'$. We argue by contradiction. In view of  \cite[Lemma 2.1]{FF}, we can show  with a direct computation that 
$V(z):=|z|^{-\frac{N-2s}{2}+\sqrt{\left(\frac{N-2s}{2}\right)^2+\gamma_{\alpha,k,n}}}Z_{\alpha,k,n}(z/|z|)$ solves $\dive(y^{1-2s}\nabla V)-y^{1-2s}\frac{\alpha}{|x|_k^2}V=0$ on $\R^{N+1}_+$ and satisfies both zero
Dirichlet and zero Neumann condition  on $\R^N \times \{0\}$. Let 
\begin{equation}\label{def_Sigma}
	\Sigma_k:=\{z \in \R^{N+1}:|x|_k=0\}.
\end{equation}
Note that  $\Sigma_k$ has Lebesgue measure $0$ and that $V$ is a solution to an elliptic equitation with  a Muckenhoupt weight  and  bounded coefficients  away from $\Sigma_k$. Then by the unique 
continuation principles proved in  \cite{TZ},   we conclude that $V \equiv 0$. Hence $Z_{\alpha,k,n}\equiv0$ which is a contradiction.
\end{remark}

\subsection{Inequalities in $H^1(B_r^+,y^{1-2s})$}
In this subsection we prove some useful inequalities.
\begin{proposition}\label{prop_hardy_with_boundary}
For any $r >0$, any $k \in \{0,\dots,N\}$, and any $V \in H^1(B_r^+,y^{1-2s})$
\begin{equation}\label{ineq_hardy_with_boundary}
\left(\frac{k-2}{2}\right)^2\int_{B_r^+}y^{1-2s}\frac{V^2}{|x|_k^2}\, dz \le \int_{B_r^+}y^{1-2s}|\nabla V|^2\, dz +\frac{N-2s}{2r}\int_{S_r^+}y^{1-2s}V^2\, dz.
\end{equation}
\end{proposition}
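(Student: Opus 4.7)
The strategy is to combine an integration-by-parts identity with the completed-square inequality $y^{1-2s}|\nabla V+\lambda V G|^2\ge 0$, where $G$ is the vector field
\begin{equation*}
G(z):=\frac{(x_k,0)}{|x|_k^2}\in\R^{N+1},\qquad x_k:=(x_1,\dots,x_k,0,\dots,0)\in\R^N.
\end{equation*}
The essential feature of $G$ is that its $(N+1)$-th component vanishes, so that the boundary flux on the flat face $\overline{B'_r}\subset\{y=0\}$ of $B^+_r$ is annihilated regardless of the degeneracy or singularity of the weight $y^{1-2s}$ at $y=0$.

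A direct computation shows that
\begin{equation*}
\dive_{N+1} G=\frac{k-2}{|x|_k^2},\qquad |G|^2=\frac{1}{|x|_k^2}
\end{equation*}
on $B^+_r\setminus\Sigma_k$, where $\Sigma_k=\{|x|_k=0\}$. Since the weight depends only on $y$, the product rule yields
\begin{equation*}
\dive(y^{1-2s}V^2 G)=y^{1-2s}\frac{k-2}{|x|_k^2}V^2+y^{1-2s}\nabla(V^2)\cdot G.
\end{equation*}
Integrating over $B^+_r$ and applying the divergence theorem, using that $G_{N+1}\equiv 0$ kills the flux on $B'_r$ and that $G\cdot z/r=|x|_k^2/(r|x|_k^2)=1/r$ on $S^+_r$, gives the key identity
\begin{equation*}
\int_{B^+_r}y^{1-2s}\nabla(V^2)\cdot G\,dz=\frac{1}{r}\int_{S^+_r}y^{1-2s}V^2\,dS-(k-2)\int_{B^+_r}y^{1-2s}\frac{V^2}{|x|_k^2}\,dz.
\end{equation*}

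Next, for any $\lambda\in\R$, expanding $\int_{B^+_r}y^{1-2s}|\nabla V+\lambda V G|^2\,dz\ge 0$, using $2V\nabla V\cdot G=\nabla(V^2)\cdot G$, and substituting the identity above gives
\begin{equation*}
\bigl[\lambda(k-2)-\lambda^2\bigr]\int_{B^+_r}y^{1-2s}\frac{V^2}{|x|_k^2}\,dz\le\int_{B^+_r}y^{1-2s}|\nabla V|^2\,dz+\frac{\lambda}{r}\int_{S^+_r}y^{1-2s}V^2\,dS.
\end{equation*}
Optimizing in $\lambda$ by the choice $\lambda=(k-2)/2$ maximizes the left-hand coefficient to $\left(\frac{k-2}{2}\right)^2$ and produces boundary constant $(k-2)/(2r)$. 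Since $k\le N$ and $s<1$ imply $k-2<N-2s$, this boundary constant is bounded above by $(N-2s)/(2r)$, yielding the claimed inequality.

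The main technical obstacle is justifying the above integration by parts for a general $V\in H^1(B^+_r,y^{1-2s})$, since $G$ is singular on $\Sigma_k$ while $V$ need not vanish there. For $k\ge 3$, the codimension of $\Sigma_k$ in $\R^{N+1}$ is large enough that smooth functions compactly supported away from $\Sigma_k$ are dense in $H^1(B^+_r,y^{1-2s})$; after proving the inequality for such functions by a standard cut-off and mollification near $\partial B^+_r$, one passes to the limit using continuity of the trace on $S^+_r$ (Proposition \ref{prop_trace_Sr+}) and Fatou's lemma applied to the left-hand side. For the degenerate cases $k\in\{0,1,2\}$ the inequality is either trivial (when the left-hand constant vanishes) or follows from the same identity with minor modifications.
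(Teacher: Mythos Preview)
Your proof is correct for $k\ge 3$ and takes a genuinely different route from the paper. The paper argues by passing to polar coordinates, applying the spherical Hardy inequality \eqref{ineq_hardy_sphere} on each sphere $S_\rho^+$, and then invoking a radial Hardy inequality from \cite[Lemma~2.4]{FF} to control the term $\left(\frac{N-2s}{2}\right)^2\int y^{1-2s}|z|^{-2}V^2\,dz$ by the radial gradient plus the boundary term. Your argument bypasses both auxiliary results via the vector-field identity and the completed square, which is more self-contained and in fact yields the sharper boundary constant $(k-2)/(2r)$ before you relax it to $(N-2s)/(2r)$. The paper's approach, on the other hand, fits naturally into its broader scheme: Proposition~\ref{prop_ineq_hardy_sphere} is needed anyway for the eigenvalue analysis on $\mathbb S^+$, so reusing it here costs nothing.

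One caveat: your closing remark that for $k\in\{0,1,2\}$ the inequality ``follows from the same identity with minor modifications'' is not right for $k=1$. Taking $V\equiv 1$ shows the left-hand side diverges while the right-hand side is finite, so \eqref{ineq_hardy_with_boundary} is simply false in that case (and meaningless for $k=0$). The range $k\in\{0,\dots,N\}$ in the statement is almost certainly a typo for $k\in\{3,\dots,N\}$, consistent with the standing hypothesis \eqref{hp_alpha}; your density-and-Fatou argument is fine precisely in that range, where $\Sigma_k$ has codimension at least $3$ and hence zero weighted $H^1$-capacity.
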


\begin{proof}
By density it is enough to prove \eqref{ineq_hardy_with_boundary} for any $\phi \in C^{\infty}(\overline{B_r^+})$.  
Passing in polar coordinates, by \eqref{ineq_hardy_sphere} and \cite[Lemma 2.4]{FF}, we have that 
\begin{multline}\label{dim_hardy_with_boundary:1}
\left(\frac{k-2}{2}\right)^2\int_{B_r^+}y^{1-2s}\frac{V^2}{|x|_k^2}\, dz
= \left(\frac{k-2}{2}\right)^2\int_{0}^r \rho^{N-1-2s} \left(\int_{\mb{S}^+}\frac{V^2(\rho\theta )}{|\theta|^2_k}\, dS\right)\, d\rho\\
\le\int_{0}^r \rho^{N-1-2s} \left(\left(\frac{N-2s}{2}\right)^2\int_{\mb{S}^+}\theta_{N+1}^{1-2s}|V^2(\rho\theta )|^2 \, dS
+\int_{\mb{S}^+}\theta_{N+1}^{1-2s}|\nabla_{\mb{S}} V(\rho\theta )|^2 \, dS\right)\, d\rho \\
=\left(\frac{N-2s}{2}\right)^2\int_{B_r^+}y^{1-2s}\frac{V^2}{|z|^2}\, dz +\int_{0}^r \rho^{N-1-2s} \left(\int_{\mb{S}^+}\theta_{N+1}^{1-2s}|\nabla_{\mb{S}} V(\rho\theta )|^2 \, dS\right)\, d\rho\\
\le \frac{N-2s}{2r}\int_{S_r^+}y^{1-2s}V^2\, dS \\
+\int_{0}^r \rho^{N+1-2s} \left(\int_{\mb{S}^+}\theta_{N+1}^{1-2s}\left(\frac{1}{\rho^2}|\nabla_{\mb{S}} V(\rho\theta )|^2+\left|\pd{V}{\rho}(\rho \theta)\right|^2\right) \, dS\right)\, d\rho \\
=\frac{N-2s}{2r}\int_{S_r^+}y^{1-2s}V^2\, dS + \int_{B_r^+}y^{1-2s} |\nabla V|^2 \, dz,
\end{multline}
hence we have proved \eqref{ineq_hardy_with_boundary}. 
\end{proof}

\begin{proposition}\label{prop_h_D+H}
Let $r>0$ and suppose that $h:B_r':\to \R$ is a measurable function such that  
\begin{equation}\label{hp_h}
|h(x)| \le C_h |x|^{-2s+\e} \quad \text{ for a.e. } x \in B_r',
\end{equation}
for some positive constant $C_h$ and some $\e \in (0,1)$.
Then for any $k \in \{3,\dots,N\}$, any $\alpha$ as in  \eqref{hp_alpha} and  any  $V \in H^1(B_r^+,y^{1-2s})$ 
\begin{multline}\label{ineq_h_D+H}
\int_{B_r'}|h| \Tr(V)^2 \, dx \\
\le k_{N,s,h}r^{\e} \left(\int_{B_r^+}y^{1-2s}|\nabla V|^2\, dz-\int_{B_r^+}y^{1-2s}\frac{\alpha}{|x|_k^2}V^2\, dz +\frac{N-2s}{2r}\int_{S_r^+}y^{1-2s}V^2\, dz\right),
\end{multline}
where $ k_{N,s,h}$ is a positive constant depending only on $N,s,C_h$.
\end{proposition}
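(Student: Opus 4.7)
The plan is to prove Proposition \ref{prop_h_D+H} in three stages: first, factor out $r^\e$ from the decay of $|h|$; second, establish the trace Hardy estimate
\[
\int_{B_r'}\frac{|\Tr V|^2}{|x|^{2s}}\, dx \le C_{N,s}\left(\int_{B_r^+}y^{1-2s}|\nabla V|^2\, dz + \frac{N-2s}{2r}\int_{S_r^+}y^{1-2s}V^2\, dS\right);
\]
and third, absorb the singular potential $-\alpha/|x|_k^2$ by means of Proposition \ref{prop_hardy_with_boundary}. The first stage is immediate: since $|x|^\e\le r^\e$ on $B_r'$, the pointwise bound \eqref{hp_h} yields $\int_{B_r'}|h|\,|\Tr V|^2\, dx\le C_h r^\e\int_{B_r'}|\Tr V|^2/|x|^{2s}\, dx$.

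For the trace Hardy estimate I would argue by density with $V\in C^\infty(\overline{B_r^+})$ and pass to polar coordinates $z=\rho\theta$, $\rho\in(0,r)$, $\theta\in\mb{S}^+$. Writing the equator direction $x/|x|$ as $\theta'\in\mb{S}'$ gives
\[
\int_{B_r'}\frac{|\Tr V|^2}{|x|^{2s}}\, dx = \int_0^r\rho^{N-1-2s}\int_{\mb{S}'}|V(\rho\theta')|^2\, dS_{\theta'}\, d\rho,
\]
so the matter reduces to the slice-wise trace bound
\[
\int_{\mb{S}'}|\Phi|^2\, dS' \le c_{N,s}\int_{\mb{S}^+}\theta_{N+1}^{1-2s}\bigl(\Phi^2+|\nabla_{\mb{S}}\Phi|^2\bigr)\, dS,\qquad \Phi\in H^1(\mb{S}^+,\theta_{N+1}^{1-2s}).
\]
This last inequality would follow from a one-dimensional fundamental-theorem-of-calculus argument in the polar angle $\varphi$ (with $\theta_{N+1}=\sin\varphi$), reducing to a fixed latitude $\{\varphi=\varphi_0\}$, $\varphi_0\in(0,\pi/2)$, where the weight $\theta_{N+1}^{1-2s}$ is bounded both from below and above, so that the classical unweighted trace theorem applies on that latitude. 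Applying this bound to $\Phi=V(\rho\,\cdot)$, integrating in $\rho$, and using $|\nabla V|^2\ge|\nabla_{\mb{S}}V|^2/|z|^2$, one obtains
\[
\int_{B_r'}\frac{|\Tr V|^2}{|x|^{2s}}\, dx \le C\left(\int_{B_r^+}y^{1-2s}\frac{V^2}{|z|^2}\, dz + \int_{B_r^+}y^{1-2s}|\nabla V|^2\, dz\right),
\]
and the trace Hardy estimate then follows from the Caffarelli--Silvestre boundary Hardy inequality $\bigl(\tfrac{N-2s}{2}\bigr)^2\int_{B_r^+}y^{1-2s}V^2/|z|^2\, dz\le \int_{B_r^+}y^{1-2s}|\nabla V|^2\, dz+\tfrac{N-2s}{2r}\int_{S_r^+}y^{1-2s}V^2\, dS$ (cf.\ \cite[Lemma~2.4]{FF}).

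Finally, if $\alpha\le 0$ the term $-\alpha\int y^{1-2s}V^2/|x|_k^2$ is non-negative, so the RHS of \eqref{ineq_h_D+H} directly dominates the RHS of the trace Hardy estimate. If instead $0<\alpha<\bigl(\tfrac{k-2}{2}\bigr)^2$, Proposition \ref{prop_hardy_with_boundary} combined with the constraint $\alpha\bigl(\tfrac{2}{k-2}\bigr)^2<1$ shows that $\int_{B_r^+}y^{1-2s}|\nabla V|^2+\tfrac{N-2s}{2r}\int_{S_r^+}y^{1-2s}V^2$ is itself controlled by the RHS of \eqref{ineq_h_D+H} up to a constant depending on $\alpha$ and $k$; combining with the previous two stages produces \eqref{ineq_h_D+H} with a constant $k_{N,s,h}$ of the required form.

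The main obstacle is the slice-wise trace bound on $\mb{S}^+$: because the Muckenhoupt weight $\theta_{N+1}^{1-2s}$ either degenerates (for $s<1/2$) or blows up (for $s>1/2$) on the equator $\mb{S}'$, the inequality cannot be deduced directly from the classical unweighted trace theorem and must be obtained through the explicit polar-angle computation sketched above.
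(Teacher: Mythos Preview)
Your three-stage outline is exactly the paper's route: the paper's proof is the single line ``The claim follows from \eqref{hp_h}, \cite[Lemma~2.5]{FF}, and \eqref{ineq_hardy_with_boundary}'', which corresponds to your factoring out $r^\e$, your trace Hardy estimate (this \emph{is} \cite[Lemma~2.5]{FF}), and the absorption of the $\alpha/|x|_k^2$ term via Proposition~\ref{prop_hardy_with_boundary}. Your polar-coordinates/slice-wise derivation of the trace Hardy bound is a reasonable expansion of what \cite{FF} proves.

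One small inconsistency to flag: in your third stage, when $\alpha>0$, you correctly observe that Proposition~\ref{prop_hardy_with_boundary} lets you control $\int_{B_r^+}y^{1-2s}|\nabla V|^2+\tfrac{N-2s}{2r}\int_{S_r^+}y^{1-2s}V^2$ by the right-hand side of \eqref{ineq_h_D+H} ``up to a constant depending on $\alpha$ and $k$'' (namely $\bigl(1-\alpha(2/(k-2))^2\bigr)^{-1}$), but then you assert the final constant is ``of the required form'', i.e.\ depending only on $N,s,C_h$. These two claims are in tension; the absorption step genuinely introduces $\alpha,k$-dependence. This is really an imprecision in the paper's statement rather than an error in your argument: in the subsequent applications (e.g.\ the proof of Proposition~\ref{prop_nabla_D+H} and the definition of $r_0$ in \eqref{def_r0}) what is actually used is the $\alpha$-free bound
\[
\int_{B_r'}|h|\,\Tr(V)^2\,dx \le k_{N,s,h}\,r^\e\left(\int_{B_r^+}y^{1-2s}|\nabla V|^2\,dz+\frac{N-2s}{2r}\int_{S_r^+}y^{1-2s}V^2\,dS\right),
\]
which is precisely what your first two stages deliver, with a constant depending only on $N,s,C_h$.
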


\begin{proof}
The claim follows from \eqref{hp_h}, \cite[Lemma 2.5]{FF},  and \eqref{ineq_hardy_with_boundary}.
\end{proof}

In view of \eqref{hp_alpha} there exists  $r_0>0$ such that 
\begin{equation}\label{def_r0}
\overline{B_{r_0}^+} \subset C \quad \text{ and  } \quad \alpha\left(\frac{2}{k-2}\right)^2+ c_{N,s}k_{N,s,g} r_0^\e<1,
\end{equation} 
where $k_{N,s,g}$ is as in Proposition \ref{prop_h_D+H}, $c_{N,s}$ as in Theorem \ref{theorem_extension} and $g$ as in \eqref{hp_g}.
\begin{proposition}\label{prop_nabla_D+H}
Let $k \in \{3,\dots,N\}$, $\alpha$ as \eqref{hp_alpha}, $g$ as in \eqref{hp_g}, $c_{N,s}$ as in Theorem \ref{theorem_extension} and $r_0$ as in \eqref{def_r0}. Then for any $V \in H^1(B_{r}^+,y^{1-2s})$ and any $r \in (0,r_0]$
\begin{multline}\label{ineq_nabla_D+H}
\int_{B_{r}^+} y^{1-2s}\left(|\nabla W|^2-\frac{\alpha}{|x|_k^2}W^2  \right) \, dz\\
-c_{N,s}\int_{B_{r}'} g \Tr(W)^2 \, dx +\frac{N-2s}{2r}\int_{S_{r}^+} y^{1-2s} W^2dS \\
\ge\left(1-\alpha\left(\frac{2}{k-2}\right)^2+ c_{N,s}k_{N,s,g} r_0^\e\right)\left(\int_{B_{r}^+} y^{1-2s} |\nabla W|^2 \, dz+\frac{N-2s}{2r}\int_{S_{r}^+} y^{1-2s} W^2dS\right).
\end{multline}
\end{proposition}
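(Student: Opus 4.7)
The plan is to combine the Hardy-type inequality with boundary (Proposition \ref{prop_hardy_with_boundary}) and the potential-absorption estimate (Proposition \ref{prop_h_D+H}) applied to $h=g$, exploiting the smallness condition \eqref{def_r0} on $r_0$. For brevity set
\[
D_r(W) := \int_{B_r^+} y^{1-2s}\Bigl(|\nabla W|^2 - \frac{\alpha}{|x|_k^2}W^2\Bigr) dz, \qquad H_r(W) := \frac{N-2s}{2r}\int_{S_r^+} y^{1-2s} W^2 \, dS,
\]
so that the left-hand side of \eqref{ineq_nabla_D+H} equals $D_r(W) - c_{N,s}\int_{B_r'} g\,\Tr(W)^2 \, dx + H_r(W)$.

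First I would invoke Proposition \ref{prop_h_D+H} with $h=g$, noting that assumption \eqref{hp_g} yields exactly \eqref{hp_h} with the same $\e$. Combined with the trivial bound $g \le |g|$, the monotonicity $r^\e \le r_0^\e$ (since $r\le r_0$ and $\e>0$), this gives
\[
-c_{N,s}\int_{B_r'} g\,\Tr(W)^2 \, dx \ge -c_{N,s}\,k_{N,s,g}\, r_0^\e \bigl(D_r(W) + H_r(W)\bigr),
\]
and hence
\[
D_r(W) - c_{N,s}\!\int_{B_r'} g\,\Tr(W)^2 \, dx + H_r(W) \ge \bigl(1 - c_{N,s}\,k_{N,s,g}\, r_0^\e\bigr)\bigl(D_r(W) + H_r(W)\bigr).
\]

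Next I would bound $D_r(W) + H_r(W)$ from below in terms of $\int_{B_r^+} y^{1-2s}|\nabla W|^2 dz + H_r(W)$. When $\alpha \ge 0$, Proposition \ref{prop_hardy_with_boundary} gives
\[
\alpha\!\int_{B_r^+} y^{1-2s}\frac{W^2}{|x|_k^2}dz \le \alpha\Bigl(\tfrac{2}{k-2}\Bigr)^2\!\left(\int_{B_r^+} y^{1-2s}|\nabla W|^2 dz + H_r(W)\right),
\]
whence $D_r(W) + H_r(W) \ge (1 - \alpha(\tfrac{2}{k-2})^2)(\int_{B_r^+} y^{1-2s}|\nabla W|^2 dz + H_r(W))$; when $\alpha \le 0$ the same lower bound is immediate since the $-\alpha/|x|_k^2$ term contributes non-negatively. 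Multiplying this estimate by the previous one and dropping the non-negative cross term $c_{N,s}k_{N,s,g}r_0^\e\cdot\alpha(\tfrac{2}{k-2})^2$ (in the case $\alpha\ge 0$) yields the claimed constant $1 - \alpha(\tfrac{2}{k-2})^2 - c_{N,s}k_{N,s,g}r_0^\e$, which is strictly positive precisely because of \eqref{def_r0}.

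The only real obstacle is the correct order of the two absorption steps. Since Proposition \ref{prop_h_D+H} already bounds $\int |g|\Tr(W)^2$ in terms of $D_r(W) + H_r(W)$ (which itself contains the $\alpha$-term), one must first absorb the $g$-integral into $D_r + H_r$ and only afterwards apply the Hardy inequality to the $\alpha$-term; trying to apply them in the opposite order would force Hardy to be used inside the constant from Proposition \ref{prop_h_D+H} and break the bookkeeping. A brief separate remark is needed for $\alpha<0$ because the Hardy inequality then goes in the unfavorable direction, but in that regime the potential term has the right sign and can simply be discarded.
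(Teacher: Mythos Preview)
Your approach is exactly the paper's: it too just cites Proposition~\ref{prop_h_D+H}, \eqref{hp_g} and \eqref{ineq_hardy_with_boundary} without further detail, and your write-up fills in precisely those steps. In fact your bookkeeping produces the constant $1-\alpha\bigl(\tfrac{2}{k-2}\bigr)^2 - c_{N,s}k_{N,s,g}r_0^{\e}$, which is the quantity that \eqref{def_r0} makes positive; the ``$+$'' before $c_{N,s}k_{N,s,g}r_0^{\e}$ in \eqref{ineq_nabla_D+H} looks like a typo in the paper. Your closing remark on $\alpha<0$ is also apt: Hardy then goes the wrong way, so one simply drops the non-negative potential term and obtains the smaller (but still positive) factor $1-c_{N,s}k_{N,s,g}r_0^{\e}$, which is all that is needed later.
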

\begin{proof}
The claim follows from  Proposition \ref{prop_h_D+H},   \eqref{hp_g} and \eqref{ineq_hardy_with_boundary}.
\end{proof}

\section{Approximated problems and a Pohozaev-type Identity}\label{sec_Pohozaev_identity}
In order to obtain a Pohozaev type identity for a  weak solution of \eqref{prob_extension_with_equation}, we  approximate it with a family of solutions to more regular problems. Then we obtain a  Pohozaev-type identity for such solutions and  pass to the limit.

Let for any $r>0$
\begin{equation}\label{def_H10Br}
H^{1}_{0,S_r^+}(B_r^+,y^{1-2s}):=\overline{\{\phi \in C^{\infty}(\overline{B_r^+}):\phi=0 \text{ on } S_r^+\}}^{\norm{\cdot}_{H^1(B_r^+,y^{1-2s})}}.
\end{equation}

\begin{remark}\label{remark_equivalence_norm_H10Sr+}
 Let $r_0$ be as in \eqref{def_r0}. By \eqref{ineq_nabla_D+H} and the Poincaré  inequality, for any $r \in (0,r_0)$,
\begin{equation}\label{def_norm_galphak}
\norm{W}_{g,\alpha,k,0}:=\left(\int_{B_{r}^+} y^{1-2s}\left(|\nabla W|^2-\frac{\alpha}{|x|_k^2}W^2  \right) \, dz-c_{N,s}\int_{B_{r}'} g \Tr(W)^2 \, dx \right)^\frac12
\end{equation}
defines a norm on $H^{1}_{0,S_{r}^+}(B_{r}^+,y^{1-2s})$ equivalent to   \eqref{def_norm_H1t}.
Furthermore 
\begin{equation}\label{def_norm_galphak_withboundray}
\norm{W}_{g,\alpha,k}
:=\left(\int_{B_{r}^+} y^{1-2s}\left(|\nabla W|^2-\frac{\alpha}{|x|_k^2}W^2  \right) \, dz-c_{N,s}\int_{B_{r}'} g \Tr(W)^2 \, dx
 +\int_{S_{r}^+} y^{1-2s} W^2 \, dz\right)^\frac12
\end{equation}
defines a norm on $H^{1}(B_{r}^+,y^{1-2s})$ equivalent to  \eqref{def_norm_H1t}.
\end{remark}

\begin{theorem}\label{theorem_aproximaition}
Let $U$ be a weak solutions of \eqref{prob_extension_with_equation},  and $r_0$ as in \eqref{def_r0}.  Then there exists $\tilde{\la}>0$ such that for any $\la \in (0,\tilde{\la})$ the problem 
\begin{equation}\label{prob_approximated}
\begin{cases}
-\dive(y^{1-2s}\nabla V)= y^{1-2s} \frac{\alpha}{|x|_k^2+\la^2}V, \quad &\text{ in } B_{r_0}^+,\\
V=U, \quad &\text{ on } S_{r_0}^+,\\
-\lim_{y \to 0^+}y^{1-2s}\pd{V}{y}=c_{N,s}g\Tr(V), \quad &\text{ on } B_{r_0}',
\end{cases}
\end{equation}
where $c_{N,s}>0$ is as in Theorem \ref{theorem_extension}, admits a weak solution $U_\la \in H^1(B_{r_0}^+,y^{1-2s})$, i.e.
\begin{equation}\label{eq_approximated}
\int_{B_{r_0}^+} y^{1-2s} \nabla U_\la\cdot\nabla W\, dz-\int_{B_{r_0}^+} y^{1-2s}\frac{\alpha}{|x|_k^2+\la^2}U_\la W  \, dz
=c_{N,s}\int_{B_{r_0}'} g \Tr(V)\Tr(W) \, dx 
\end{equation}
for any  $W \in H^1_{0,S_{r_0}^+}(B_{r_0}^+,y^{1-2s})$, and $U_\la=U$ on $S_{r_0}^+$. Furthermore 
\begin{equation}\label{limit_Ula_to_U}
U_\la \to U \text{ strongly in } H^1(B_{r_0}^+,y^{1-2s}) \quad \text{ as } \la \to 0^+.
\end{equation}
\end{theorem}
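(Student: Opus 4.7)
The approach is to reduce the approximated boundary value problem to a coercive linear equation on the closed subspace $H^1_{0, S_{r_0}^+}(B_{r_0}^+, y^{1-2s})$ via the substitution $U_\la = U + W_\la$, apply either Lax-Milgram or the Implicit Function Theorem as in \cite{FSlincei} (which coincide for this linear problem) to produce $U_\la$ for small $\la$, and then test the equation for $W_\la$ against itself to pass to the limit.

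\textbf{Reformulation.} Writing $U_\la = U + W_\la$ with $W_\la \in H^1_{0, S_{r_0}^+}(B_{r_0}^+, y^{1-2s})$ converts \eqref{eq_approximated} into the problem of finding $W_\la$ such that $a_\la(W_\la, \phi) = L_\la(\phi)$ for every $\phi \in H^1_{0, S_{r_0}^+}(B_{r_0}^+, y^{1-2s})$, where
\begin{equation*}
a_\la(W, \phi) := \int_{B_{r_0}^+} y^{1-2s}\left(\nabla W \cdot \nabla \phi - \frac{\alpha}{|x|_k^2 + \la^2} W \phi\right) dz - c_{N,s}\int_{B_{r_0}'} g\, \Tr(W)\Tr(\phi)\, dx,
\end{equation*}
and, after substituting the weak equation \eqref{eq_extension_with_equation} for $U$ (whose boundary contribution on $S_{r_0}^+$ in Proposition \ref{prop_extension_with_boundary} vanishes on such $\phi$) into $-a_\la(U, \phi)$,
\begin{equation*}
L_\la(\phi) = -\int_{B_{r_0}^+} y^{1-2s}\, \frac{\alpha \la^2}{|x|_k^2(|x|_k^2 + \la^2)}\, U \phi\, dz.
\end{equation*}

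\textbf{Existence.} For continuity of $a_\la$, the bound $|\alpha|/(|x|_k^2 + \la^2) \le |\alpha|/|x|_k^2$ together with Proposition \ref{prop_hardy_with_boundary} controls the potential term, and Proposition \ref{prop_h_D+H} controls the $g$-term, both uniformly in $\la$. For uniform-in-$\la$ coercivity I would distinguish two cases. If $\alpha \ge 0$, then $\alpha/(|x|_k^2 + \la^2) \le \alpha/|x|_k^2$ gives $a_\la(W, W) \ge \|W\|_{g,\alpha,k,0}^2$, equivalent to $\|W\|_{H^1(B_{r_0}^+, y^{1-2s})}^2$ by Remark \ref{remark_equivalence_norm_H10Sr+}. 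If $\alpha<0$, the potential term contributes nonnegatively and the estimate $\int_{B_{r_0}'}|g|\,\Tr(W)^2\,dx \le c_{N,s}k_{N,s,g}r_0^\e\int_{B_{r_0}^+}y^{1-2s}|\nabla W|^2\,dz$ (Proposition \ref{prop_h_D+H} combined with Hardy on $W$, using $W=0$ on $S_{r_0}^+$) together with the smallness of $r_0$ in \eqref{def_r0} closes the argument. Continuity of $L_\la$ on $H^1_{0, S_{r_0}^+}$ follows from $|\alpha \la^2/(|x|_k^2(|x|_k^2 + \la^2))| \le |\alpha|/|x|_k^2$ and Hardy. Lax-Milgram then yields a unique $W_\la$, and $U_\la := U + W_\la$ is the unique solution of \eqref{eq_approximated}.

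\textbf{Convergence.} Testing $a_\la(W_\la, W_\la) = L_\la(W_\la)$ and splitting one power of $|x|_k^{-1}$ by Cauchy-Schwarz gives
\begin{equation*}
C\|W_\la\|_{H^1(B_{r_0}^+, y^{1-2s})}^2 \le \left(\int_{B_{r_0}^+} y^{1-2s}\frac{\alpha^2 \la^4\, U^2}{|x|_k^2(|x|_k^2 + \la^2)^2}\,dz\right)^{1/2}\left(\int_{B_{r_0}^+} y^{1-2s}\frac{W_\la^2}{|x|_k^2}\,dz\right)^{1/2}.
\end{equation*}
Proposition \ref{prop_hardy_with_boundary} bounds the second factor by a constant times $\|W_\la\|_{H^1(B_{r_0}^+,y^{1-2s})}$. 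The integrand in the first factor is dominated by $\alpha^2 U^2/|x|_k^2 \in L^1(B_{r_0}^+, y^{1-2s})$ (Hardy applied to $U$) and converges pointwise to zero as $\la \to 0^+$, so Dominated Convergence yields $\|W_\la\|_{H^1(B_{r_0}^+, y^{1-2s})} \to 0$, which is \eqref{limit_Ula_to_U}. The principal technical difficulty is the uniform-in-$\la$ coercivity: the sign of $\alpha$ must be treated separately, and the quantitative smallness of $r_0$ encoded in \eqref{def_r0} is exactly what prevents the $g$-perturbation (and, when $\alpha \ge 0$, the singular potential) from destroying coercivity once $\alpha/|x|_k^2$ is replaced by $\alpha/(|x|_k^2 + \la^2)$.
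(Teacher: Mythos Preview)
Your approach is essentially the paper's: both substitute $U_\la = U + W_\la$ with $W_\la \in H^1_{0,S_{r_0}^+}(B_{r_0}^+,y^{1-2s})$ and solve for $W_\la$. The paper packages this via the Implicit Function Theorem, checking only that the bilinear form at $\la=0$ (namely $(\cdot,\cdot)_{g,\alpha,k,0}$) is coercive and that $\Phi$ is continuous at $(0,0)$, so that the implicit function $f(\la)=-W_\la$ exists and is continuous at $0$; you apply Lax--Milgram directly and obtain \eqref{limit_Ula_to_U} by testing with $W_\la$ and invoking Dominated Convergence. For this problem, affine in $V$, the two routes coincide, and your convergence argument is in fact more explicit than the paper's.

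There is one minor slip in your coercivity step when $\alpha<0$. After discarding the nonnegative potential term you need $c_{N,s}\int_{B_{r_0}'}|g|\,\Tr(W)^2\,dx \le \theta\int_{B_{r_0}^+}y^{1-2s}|\nabla W|^2\,dz$ for some $\theta<1$; but the smallness condition \eqref{def_r0} only yields $c_{N,s}k_{N,s,g}r_0^\e < 1-\alpha\bigl(\tfrac{2}{k-2}\bigr)^2 = 1+|\alpha|\bigl(\tfrac{2}{k-2}\bigr)^2$, which may exceed $1$. The fix is trivial (shrink $r_0$ so that additionally $c_{N,s}k_{N,s,g}r_0^\e<1$, which is always possible), or alternatively argue, as the paper implicitly does, from the coercive form $a_0=\|\cdot\|_{g,\alpha,k,0}^2$ rather than bounding $a_\la$ from scratch. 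With this adjustment your proof is complete.
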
 
\begin{proof}
Let us consider the map $\Phi:\R\times  H^1_{0,S_r^+}(B_r^+,y^{1-2s}) \to  (H^1_{0,S_r^+}(B_r^+,y^{1-2s}))^*$ defined as 
\begin{multline}\label{dim_aproximaition:1}
\Phi(\la,V)(W):=\int_{B_{r_0}^+} y^{1-2s} \nabla V\cdot\nabla W \, dz-\int_{B_{r_0}^+} y^{1-2s}\frac{\alpha}{|x|_k^2+\la^2}V W  \, dz\\
-c_{N,s}\int_{B_{r_0}'} g \Tr(V)\Tr(W) \, dx+\int_{B_{r_0}^+} y^{1-2s}\left(\frac{\alpha}{|x|_k^2+\la^2}-\frac{\alpha}{|x|^2_k}\right) U W \, dz.
\end{multline}
for any $W \in H^1_{0,S_{r_0}^+}(B_{r_0}^+,y^{1-2s})$. It is clear that $\Phi$ is well defined and that $\Phi$ is continuous in $(0,0)$ in view of  H\"older's inequality,  Proposition \ref{prop_h_D+H},   \eqref{hp_g},  and \eqref{ineq_hardy_with_boundary}.  Furthermore  $\Phi(0,0)=0$. 

Let us prove that $\Phi_V(0,0) \in \mathcal{L}(H^1_{0,S_{r_0}^+}(B_{r_0}^+,y^{1-2s}),(H^1_{0,S_{r_0}^+}(B_{r_0}^+,y^{1-2s})^*)$ is an isomorphism, where $\Phi_V$ is the partial derivative with respect to $V$ of $\Phi$.
For any  $W_1,W_2 \in H^1_{0,S_{r_0}^+}(B_{r_0}^+,y^{1-2s})$
\begin{equation}\label{dim_aproximaition:2}
\df{(H^1_{0,S_{r_0}^+}(B_{r_0}^+,y^{1-2s}))^*}{\Phi_V(0,0)(W_1)}{W_2}{H^1_{0,S_{r_0}^+}(B_{r_0}^+,y^{1-2s})}=\ps{g,\alpha,k,0}{W_1}{W_2}.
\end{equation} 
Hence, by Remark \ref{remark_equivalence_norm_H10Sr+}, $\Phi_V(0,0)$ is the Rietz isomorphism associated to the norm $\norm{\cdot}_{g,\alpha,k,0}$.

We are now in position to apply the Implicit Function Theorem to  $\Phi$ in the point $(0,0)$ and conclude that there exist $\tilde{\la} >0$, $\rho>0$   and a  function
\begin{equation}\label{dim_aproximaition:4}
f:(-\tilde\la,\tilde\la) \to B_\rho(0),
\end{equation}
continuous in $0$, such that   $\Phi(\la,V)=0$ if and only if $V=f(\la)$ for any $\la \in 
(-\tilde\la,\tilde{\la})$ and $V \in  B_\rho(0)$.
The set  $B_\rho(0)$ in \eqref{dim_aproximaition:4} is defined as $B_\rho(0)=\{V \in H^1_{0,S_{r_0}^+}(B_{r_0}^+,y^{1-2s}):\norm{V}_{H^1(B_{r_0}^+,y^{1-2s})}<\rho\}$.

It follows that $U_\la:=U-f(\la)$ solves \eqref{eq_approximated} for any $\la \in (0,\tilde \la)$ since $U$ is a solution of \eqref{eq_extension_with_boundary}. Furthermore $U_\la \to U$ strongly in  $H^1(B_{r_0}^+,y^{1-2s})$ as $\la \to 0^+$ since $f$ is continuous in $0$ and $f(0)=0$.
\end{proof}

\begin{remark}\label{reamrk_extension_with_boundary_double}
Let $U_\la$ be a solution of \eqref{eq_approximated}. Then,  reasoning in the same way of Proposition \ref{prop_extension_with_boundary}, we can prove that for a.e. $r \in (0,r_0)$, a.e. $\rho \in 
(0,r)$  and any $W \in H^1(B_r^+\setminus {B^+_\rho},y^{1-2s})$
\begin{multline}\label{eq_extension_with_boundary_double}
\int_{B_r^+\setminus B_\rho^+} y^{1-2s} \left(\nabla U_\la\cdot\nabla W -\frac{\alpha}{|x|_k^2+\la^2}U_\la W  \right)\, dz	\\
= \frac{1}{r}\int_{S_r^+} y^{1-2s} \nabla U_\la\cdot z  \,W \, dS-\frac{1}{\rho}\int_{S_\rho^+} y^{1-2s} \nabla U_\la\cdot z  \,W \, dS
+c_{N,s}\int_{B_r'\setminus B'_\rho} g\Tr(U_\la)  \Tr(W) \, dx.
\end{multline}
\end{remark}

Let $\nu$ be the outer normal vector to $B_r^+$ on $S_r^+$, that is $\nu(z) =\frac{z}{|z|}$.
\begin{proposition}\label{prop_pohozaev_identity_Ula}
For any $\la \in (0,\tilde{\la})$, let $U_\la$ be a solution of \eqref{eq_approximated}. Then for a.e. $ r \in (0,r_0)$
\begin{align}\label{eq_pohozaev_identity_Ula}
&\frac{r}{2}\int_{S_r^+}y^{1-2s}|\nabla U_\la|^2 \, dS-r \int_{S_r^+} y^{1-2s} |\nabla U_\la \cdot \nu|^2 \, dS \\
+&\frac{c_{N,s}}{2}\int_{B_r'} (Ng + x\cdot\nabla g) |\Tr(U_\la)|^2 \, dx -\frac{c_{N,s}r}{2}\int_{S_r'} g |\Tr(U_\la)|^2 \, dS \\
=&\frac{N-2s}{2}\int_{B_r^+}y^{1-2s} |\nabla U_\la |^2 \, dz +  \int_{B_r^+}y^{1-2s} \frac{\alpha}{|x|^2_k+\la^2}U_\la\nabla  U_\la \cdot z \, dz.
\end{align}
\end{proposition}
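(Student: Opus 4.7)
The plan is to derive \eqref{eq_pohozaev_identity_Ula} by the classical Rellich-Pohozaev computation, testing the approximated equation against the multiplier $z\cdot\nabla U_\la = x\cdot\nabla_x U_\la + y\partial_y U_\la$. Since the potential $\alpha/(|x|_k^2+\la^2)$ in \eqref{prob_approximated} is smooth and bounded for every $\la>0$ and $g$ satisfies \eqref{hp_g}, standard interior regularity theory for elliptic operators with the Muckenhoupt weight $y^{1-2s}$ yields $U_\la\in C^\infty$ in the open half-ball, and the Neumann-type boundary condition gives that $y^{1-2s}\partial_y U_\la$ extends continuously up to $\{y=0\}$ with value $-c_{N,s}g\Tr(U_\la)$; in particular $y\partial_y U_\la = y^{2s}(y^{1-2s}\partial_y U_\la)\to 0$ as $y\to 0^+$ for $s>0$. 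This enhanced regularity is precisely what motivates the approximation step of Theorem \ref{theorem_aproximaition}.

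Multiplying $-\dive(y^{1-2s}\nabla U_\la) = y^{1-2s}\tfrac{\alpha}{|x|_k^2+\la^2}U_\la$ by $z\cdot\nabla U_\la$, I would integrate over the truncated half-ball $B_r^+\cap\{y>\e\}$ and apply the classical divergence theorem. The Rellich algebraic identity
\[
\nabla U_\la\cdot\nabla(z\cdot\nabla U_\la) = |\nabla U_\la|^2+\tfrac12\, z\cdot\nabla(|\nabla U_\la|^2)
\]
together with $\dive(y^{1-2s}z) = (N+2-2s)y^{1-2s}$ turns the bulk contribution into $-\tfrac{N-2s}{2}\int y^{1-2s}|\nabla U_\la|^2\,dz$; on $S_r^+$, where $z\cdot\nu=r$ and $\nabla U_\la\cdot z = r(\nabla U_\la\cdot\nu)$, the boundary integrals collapse to the two $S_r^+$-terms of \eqref{eq_pohozaev_identity_Ula}. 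On the slice $\{y=\e\}\cap B_r$, where $\nu=-e_{N+1}$ and $z\cdot\nu=-\e$, a direct computation yields
\[
-\tfrac{\e^{2-2s}}{2}\int_{\{y=\e\}\cap B_r}|\nabla U_\la|^2\,dx + \int_{\{y=\e\}\cap B_r} \e^{1-2s}\partial_y U_\la\,\bigl(x\cdot\nabla_x U_\la+\e\partial_y U_\la\bigr)\,dx.
\]
Letting $\e\to 0^+$, the first piece vanishes (after writing $\e^{2-2s}|\nabla U_\la|^2 = \e^{2s}(\e^{1-2s}|\nabla U_\la|)^2$) and so does the $\e\partial_y U_\la$ cross term by the same $\e^{2s}$ factor, while the principal term converges by the boundary condition to $-c_{N,s}\int_{B_r'}g\,\Tr(U_\la)(x\cdot\nabla_x\Tr(U_\la))\,dx$.

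To finish, I would integrate by parts once more on $B_r'$, using $\Tr(U_\la)(x\cdot\nabla_x\Tr(U_\la)) = \tfrac12\, x\cdot\nabla_x(\Tr(U_\la)^2)$ and $\dive(gx)=Ng+x\cdot\nabla g$, obtaining
\[
c_{N,s}\int_{B_r'}g\,\Tr(U_\la)(x\cdot\nabla_x\Tr(U_\la))\,dx = -\tfrac{c_{N,s}}{2}\int_{B_r'}(Ng+x\cdot\nabla g)\Tr(U_\la)^2\,dx + \tfrac{c_{N,s}r}{2}\int_{S_r'}g\,\Tr(U_\la)^2\,dS.
\]
Collecting the three contributions produces \eqref{eq_pohozaev_identity_Ula}. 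The restriction to a.e.\ $r\in(0,r_0)$ comes from Remark \ref{remark_int_nabla_with_coarea} (and the analogue in the $x$-variable), which ensures that the $S_r^+$-trace of $y^{1-2s}|\nabla U_\la|^2$ and the $S_r'$-trace of $g\,\Tr(U_\la)^2$ are integrable. The main obstacle is the limit $\e\to 0^+$ on the horizontal slice: one must justify that $\e^{1-2s}\partial_y U_\la(\cdot,\e)$ converges to $-c_{N,s}g\Tr(U_\la)$ strongly enough (e.g.\ in $L^2_{\rm loc}$) to pair with $x\cdot\nabla_x\Tr(U_\la)$, which is exactly the gain provided by desingularising the Hardy potential through the $\la^2$-regularisation.
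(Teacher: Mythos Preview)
Your overall Rellich--Pohozaev scheme is the right one and matches the paper's structure, but there is a genuine gap in how you handle the origin. You attribute the needed regularity for the $\e\to 0^+$ limit to ``desingularising the Hardy potential through the $\la^2$-regularisation''. However, the $\la^2$-shift regularises only the \emph{bulk} potential $\alpha/|x|_k^2$; the \emph{boundary} potential $g$ keeps its singularity at $x=0$ (by \eqref{hp_g} we only know $g\in W^{1,\infty}_{\rm loc}(\Omega\setminus\{0\})$ and $|g(x)|\le C_g|x|^{-2s+\e}$). Consequently: (i) your claim that $y^{1-2s}\partial_y U_\la$ extends continuously to $\{y=0\}$ with value $-c_{N,s}g\Tr(U_\la)$ fails near $x=0$; (ii) the convergence of $\e^{1-2s}\partial_y U_\la(\cdot,\e)$ to $-c_{N,s}g\Tr(U_\la)$ ``in $L^2_{\rm loc}$'' cannot be paired with $x\cdot\nabla_x\Tr(U_\la)$, which is not known to be $L^2$ near the origin; and (iii) your final integration by parts on the full disk $B_r'$ is unjustified, since $g\,\Tr(U_\la)^2\,x\in W^{1,1}$ is only available away from the origin.

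The paper closes this gap by a \emph{double} truncation. It first works on the annular slab $B_{r,\delta}^+\setminus B_{\rho,\delta}^+$, where the regularity results of \cite{FSlincei} (in particular $\nabla_x U_\la\in H^1$ and $\Tr(U_\la)\in H^{1+s}$ on annuli) apply, and integrates the Rellich identity there. It then passes to the limit $\delta\to 0^+$ and afterwards $\rho\to 0^+$, in each case not by pointwise bounds but along a \emph{well-chosen sequence}: if no sequence $\delta_n\to 0^+$ satisfied $\delta_n^{2-2s}\int_{B_r'}|\nabla U_\la(x,\delta_n)|^2\,dx\to 0$, then integrating the resulting lower bound in $\delta$ would contradict $y^{1-2s}|\nabla U_\la|^2\in L^1$; the $S_\rho^+$ and $S_\rho'$ terms are disposed of by the same contradiction argument as $\rho\to 0^+$. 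The integration by parts involving $g$ is performed only on $B_r'\setminus B_\rho'$, where $g$ is $W^{1,\infty}$, and the inner boundary contribution vanishes along $\rho_n$. Your argument would be repaired by inserting this second truncation around the origin and replacing the pointwise convergence claims on the slice by the sequence/$L^1$-contradiction device.
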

\begin{proof}
We proceed in the spirit of \cite[Proposition 2.3]{FSlincei}, since   $(|x|^2_k+\la^2)^{-1}U_\la \in L^2(B_r^+,y^{1-2s})$ and $g \in W^{1,\infty}_{loc}(\Omega \setminus\{0\})$.
Then by \cite[Theorem 2.1, Proposition 3.6]{FSlincei} and the proof of \cite[Proposition 2.2]{FSlincei}, for any $r\in (0,r_0)$ and $\rho \in (0,r)$,
\begin{align}
&\nabla_x U_\la \in H^1(B_r^+\setminus B_{\rho}^+,y^{1-2s}), \quad \text{ and } \quad y^{1-2s}\pd{U_\la}{y} \in   H^1(B_r^+\setminus B_{\rho}^+,y^{2s-1}), \label{dim_pohozaev_identity_Ula:1}\\
&\Tr(U_\la) \in H^{1+s}(B_r'\setminus B_{\rho}'), \quad  \text{ and } \quad \Tr(\nabla_x U_\la)=\nabla \Tr(U_\la),\label{dim_pohozaev_identity_Ula:1.1}\\
& \nabla U_\la \cdot z \in H^1(B_r^+\setminus B_{\rho}^+,y^{1-2s}), \quad \text{ and } \quad \Tr(\nabla U_\la \cdot z)=\Tr(\nabla U_\la) \cdot x,
\end{align} 
where  $H^{1+s}(B_r'\setminus B_{\rho}'):=\{w \in H^1(B_r'\setminus B_{\rho}'): \pd{w}{x_i} \in  W^{s,2}(B_r'\setminus B_{\rho}') \text{ for any } i=1,\dots, N\}$.
We also have, in view of \eqref{eq_approximated},  the following  identity 
\begin{equation}\label{dim_pohozaev_identity_Ula:2}
\dive(y^{1-2s}|\nabla U_\la|^2 z-2y^{1-2s}\nabla U_\la \cdot z \nabla U_\la)=(N-2s)|\nabla U_\la|^2 +2\frac{\alpha}{|x|_k^2+\la^2}U_\la \nabla U_\la \cdot z
\end{equation}
in a distributional sense in $B_r^+\setminus B_{\rho}^+$.
Furthermore, thanks to  \eqref{dim_pohozaev_identity_Ula:1},
\begin{equation}\label{dim_pohozaev_identity_Ula:3}
\dive(y^{1-2s}\nabla U_\la \cdot z \nabla U_\la)= -y^{1-2s}\frac{\alpha}{|x|_k^2+\la^2}U_\la \nabla U_\la \cdot z+ y^{1-2s}\nabla U_\la \cdot \nabla(\nabla U_\la\cdot z) \in L^1(B_r^+\setminus B_{\rho}^+)
\end{equation}
and so by \eqref{dim_pohozaev_identity_Ula:2}
\begin{equation}\label{dim_pohozaev_identity_Ula:4}
\dive(y^{1-2s}|\nabla U_\la|^2 z) \in L^1(B_r^+\setminus B_{\rho}^+).
\end{equation}
Let, for any $\delta \in (0,r)$,
\begin{equation}\label{def_BSrdelta}
 B^+_{r,\delta}:=\{(x,y)\ \in B_r^+:y>\delta\} \quad \text { and } \quad S^+_{r,\delta}:=\{(x,y)\ \in S_r^+:y>\delta\}.
\end{equation}
Integrating by part on $B_r^+\setminus B_{\rho}^+$ we obtain, for any $\delta\in (0,\rho)$,  
\begin{multline}\label{dim_pohozaev_identity_Ula:5}
\int_{B_{r,\delta}^+\setminus B_{\rho,\delta}^+}\dive(y^{1-2s}|\nabla U_\la|^2 z) \, dz = r \int_{S_{r,\delta}^+}y^{1-2s}|\nabla U_\la|^2 \, dS
-\rho \int_{S_{\rho,\delta}^+}y^{1-2s}|\nabla U_\la|^2 \, dS \\
-\delta^{2-2s}\int_{B'_{\sqrt{r^2-\delta^2}}\setminus B'_{\sqrt{\rho^2-\delta^2}}}|\nabla U_\la|^2(x,\delta)\, dx.
\end{multline}
We claim that there exists a sequence $\delta_n \to 0^+$ such that 
\begin{equation}\label{dim_pohozaev_identity_Ula:6}
\lim_{n \to \infty} \delta^{2-2s}\int_{B'_{\sqrt{r^2-\delta_n^2}}\setminus B'_{\sqrt{\rho^2-\delta_n^2}}}|\nabla U_\la|^2(x,\delta)\, dx=0
\end{equation}
arguing by contradiction. If the claim does not hold than there exist a constant $C>0$ and $\delta_0 \in (0,\rho)$ such that $B_r'\times(0,\delta_0)\subseteq B_{r_0}^+$ and 
\begin{equation}\label{dim_pohozaev_identity_Ula:7}
\delta^{1-2s}\int_{B'_{\sqrt{r^2-\delta^2}}\setminus B'_{\sqrt{\rho^2-\delta^2}}}|\nabla U_\la|^2(x,\delta)\, dx \ge \frac{C}{\delta} \quad \text{ for any } \delta \in (0,\delta_0).
\end{equation}
Then integrating \eqref{dim_pohozaev_identity_Ula:7} over $(0,\delta_0)$ we obtain 
\begin{equation}\label{dim_pohozaev_identity_Ula:8}
\int_{0}^{\delta_0}\left(\delta^{1-2s}\int_{B'_r}|\nabla U_\la|^2(x,\delta)\, dx\right) \, d \delta 
\ge \int_{0}^{\delta_0} \frac{C}{\delta} d\delta = +\infty,
\end{equation}
which is a contradiction in view of the Fubini-Tonelli Theorem. Then we can pass to the limit as $\delta=\delta_n$ in \eqref{dim_pohozaev_identity_Ula:5} and conclude that, thanks to the Dominate 
Convergence Theorem and the Monotone Convergence Theorem,
\begin{equation}\label{dim_pohozaev_identity_Ula:9}
\int_{B_{r}^+\setminus B_{\rho}^+}\dive(y^{1-2s}|\nabla U_\la|^2 z) \, dz = r \int_{S_{r}^+}y^{1-2s}|\nabla U_\la|^2 \, dS
-\rho \int_{S_{\rho}^+}y^{1-2s}|\nabla U_\la|^2 \, dS 
\end{equation}
for a.e $r\in (0,r_0)$ and a.e. $\rho \in (0,r)$. Testing \eqref{eq_extension_with_boundary_double} with $\nabla U \cdot z$ we obtain, in view of \eqref{dim_pohozaev_identity_Ula:3} and Remark 
\ref{reamrk_extension_with_boundary_double},
\begin{multline}\label{dim_pohozaev_identity_Ula:10}
\int_{B^+_r\setminus B_\rho^+}\dive(y^{1-2s}\nabla U_\la \cdot z \nabla U_\la) \, dz\\
=\int_{B_r^+\setminus B_\rho^+} y^{1-2s}\nabla U_\la \cdot \nabla(\nabla U_\la\cdot z)\, dz-\int_{B_r^+\setminus B_\rho^+} y^{1-2s} \frac{\alpha}{|x|_k^2+\la^2}U_\la \nabla U_\la \cdot z\, dz \\
=\frac{1}{r}\int_{S_r^+} y^{1-2s}|\nabla U_\la \cdot z|^2\, dS-\frac{1}{\rho}\int_{S_\rho^+} y^{1-2s}|\nabla U_\la \cdot z|^2\, dS+ c_{N,s}\int_{B_r'\setminus B_\rho'}g \Tr(U_\la)\, \nabla_x\Tr(U_\la)\cdot x \, dx.
\end{multline} 
We note that $g \Tr(U_\la)^2 x  \in W^{1,1}(B_r'\setminus B_\rho',\R^N)$ by \eqref{hp_g} and \eqref{dim_pohozaev_identity_Ula:1.1} hence integrating by part we obtain 
\begin{multline}\label{dim_pohozaev_identity_Ula:11}
\int_{B_r'\setminus B_\rho'}g\, \Tr(U_\la) \nabla_x\Tr(U_\la)\cdot x \, dx=-\frac{1}{2}\int_{B_r'\setminus B_\rho'}(Ng+x \cdot \nabla g) \Tr(U_\la)^2 \, dx\\
+\frac{r}{2}\int_{S_r'}g|\Tr(U_\la)|^2 dS'-\frac{\rho}{2}\int_{S_\rho'}g|\Tr(U_\la)|^2 dS'.
\end{multline}

Arguing as in the proof of \eqref{dim_pohozaev_identity_Ula:6}, we see that there exists a sequence $\rho_n \to 0^+$ such that 
\begin{multline}\label{dim_pohozaev_identity_Ula:12}
\lim_{n \to \infty}\rho_n \int_{S_{\rho_n}^+}y^{1-2s}|\nabla U_\la|^2 \, dS=\lim_{n \to \infty}\rho_n \int_{S_{\rho_n}^+} y^{1-2s}\left|\nabla U_\la \cdot \frac{z}{|z|}\right|^2\, dS\\
=\lim_{n \to \infty}\rho_n\int_{S_{\rho_n}'}g|\Tr(U_\la)|^2 dS'=0.
\end{multline}
Then by the Dominated Convergence Theorem, we can pass to the limit as $\rho=\rho_n$ and $n \to \infty$ in \eqref{dim_pohozaev_identity_Ula:9}, \eqref{dim_pohozaev_identity_Ula:10}, \eqref{dim_pohozaev_identity_Ula:11} and conclude that \eqref{eq_pohozaev_identity_Ula} holds in view of \eqref{dim_pohozaev_identity_Ula:2}.
\end{proof}

\begin{proposition}\label{prop_pohozaev_ideentity}
Let $U$ be a solution of \eqref{eq_extension_with_equation}. Then for a.e. $ r \in (0,r_0)$
\begin{align}\label{eq_pohozaev_ideentity}
&\frac{r}{2}\int_{S_r^+}y^{1-2s}\left(|\nabla U|^2- \frac{\alpha}{|x|^2_k}U^2\right) \, dS-r \int_{S_r^+} y^{1-2s} |\nabla U \cdot \nu|^2 \, dS \\
+&\frac{c_{N,s}}{2}\int_{B_r'} (Ng +x\cdot\nabla g) |\Tr(U)|^2 \, dx -\frac{c_{N,s}}{2}r\int_{S_r'} g |\Tr(U)|^2 \, dS' \\
=& \frac{N-2s}{2}\int_{B_r^+}y^{1-2s} \left(|\nabla U |^2- \frac{\alpha}{|x|^2_k}U^2\right) \, dz.
\end{align}
\end{proposition}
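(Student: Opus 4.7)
The plan is to apply the Pohozaev identity \eqref{eq_pohozaev_identity_Ula} to the approximating solutions $U_\la$ given by Theorem \ref{theorem_aproximaition}, and then pass to the limit as $\la\to 0^+$, using the strong convergence $U_\la\to U$ in $H^1(B_{r_0}^+,y^{1-2s})$. Most terms in \eqref{eq_pohozaev_identity_Ula} pose no difficulty, but the volume integral
\begin{equation*}
I_\la(r):=\int_{B_r^+}y^{1-2s}\,\frac{\alpha}{|x|^2_k+\la^2}\,U_\la\,\nabla  U_\la \cdot z \, dz
\end{equation*}
is delicate, because the limiting integrand $\alpha U\,\nabla U\cdot z/|x|_k^2$ is not a priori integrable against the weight $y^{1-2s}$. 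The first step is therefore to rewrite $I_\la(r)$ by an integration by parts so that no gradient of $U_\la$ appears.

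Since $U_\la\,\nabla U_\la=\tfrac12\nabla(U_\la^2)$, I would apply the divergence theorem to the vector field $U_\la^2\,y^{1-2s}z/(|x|_k^2+\la^2)$ on $B_r^+$. This is legitimate by approximating $U_\la$ by smooth functions and by a standard cutoff argument near $\{y=0\}$, observing that the normal component of $y^{1-2s}z/(|x|_k^2+\la^2)$ vanishes on $B_r'$ because $y^{1-2s}y=y^{2-2s}\to 0$ as $y\to 0^+$. The outcome is
\begin{equation*}
I_\la(r)=\frac{\alpha r}{2}\int_{S_r^+}\frac{y^{1-2s}U_\la^2}{|x|_k^2+\la^2}\,dS -\frac{\alpha}{2}\int_{B_r^+} U_\la^2\, \dive\left(\frac{y^{1-2s}z}{|x|_k^2+\la^2}\right) dz,
\end{equation*}
and a direct computation using $\dive_z(y^{1-2s}z)=(N+2-2s)y^{1-2s}$ and $z\cdot\nabla_z(|x|_k^2+\la^2)=2|x|_k^2$ yields
\begin{equation*}
\dive\left(\frac{y^{1-2s}z}{|x|_k^2+\la^2}\right)=\frac{(N-2s)\,y^{1-2s}}{|x|_k^2+\la^2}+\frac{2\la^2\, y^{1-2s}}{(|x|_k^2+\la^2)^2}.
\end{equation*}
The elementary inequality $\la^2/(|x|_k^2+\la^2)^2\le 1/(4|x|_k^2)$ then shows that this divergence is dominated, uniformly in $\la$, by a constant multiple of $y^{1-2s}/|x|_k^2$.

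The passage to the limit $\la\to 0^+$ becomes routine. The strong $H^1$ convergence handles $\int_{B_r^+}y^{1-2s}|\nabla U_\la|^2\,dz$; Dominated Convergence, combined with the uniform bound above and the Hardy inequality \eqref{ineq_hardy_with_boundary} used as an integrable majorant, handles the singular term $\int_{B_r^+} U_\la^2\,\dive(\cdots)\,dz$; and the $B_r'$ integrals involving $g$ and $x\cdot\nabla g$ are controlled via \eqref{hp_g} and Proposition \ref{prop_ineq_hardy_Hs}. For the spherical integrals on $S_r^+$ and $S_r'$, I would fix a countable sequence $\la_n\to 0^+$ and, by the coarea formula applied to the strong $H^1$ and trace convergences, extract a subsequence such that for a.e. $r\in(0,r_0)$ all boundary norms converge and \eqref{eq_pohozaev_identity_Ula} holds for every $\la_n$. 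Finally, reorganizing the limiting identity — moving $\tfrac{\alpha r}{2}\int_{S_r^+}y^{1-2s}U^2/|x|_k^2\,dS$ to the left-hand side and recognizing the combinations $|\nabla U|^2-\alpha U^2/|x|_k^2$ on both the spherical and the bulk sides — produces exactly \eqref{eq_pohozaev_ideentity}. The principal technical subtlety is the joint choice of an admissible $r$ ensuring the simultaneous validity of \eqref{eq_pohozaev_identity_Ula} and convergence of all boundary integrals, but this is a standard Fubini-type subsequence extraction with no genuinely new analytic difficulty.
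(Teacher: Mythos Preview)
Your proposal is correct and follows essentially the same route as the paper: integrate by parts in $I_\la(r)$ to eliminate the gradient (the paper writes the divergence as $(N+2-2s)y^{1-2s}/(|x|_k^2+\la^2)-2|x|_k^2 y^{1-2s}/(|x|_k^2+\la^2)^2$, which is algebraically the same as your expression), then pass to the limit using the strong $H^1$ convergence of $U_\la$, dominated convergence with the Hardy inequality supplying the majorant, and a coarea/Fatou argument along a countable subsequence to handle the $S_r^+$ and $S_r'$ boundary integrals for a.e.\ $r$. The only cosmetic differences are that the paper makes the cutoff near $\{y=0\}$ explicit via the sets $B_{r,\delta}^+$ and a subsequence $\delta_n\to 0^+$, and that it dominates via a subsequence majorant $G\in L^2(B_{r_0}^+,y^{1-2s}|x|_k^{-2})$ extracted from the strong convergence rather than through your pointwise bound $\la^2/(|x|_k^2+\la^2)^2\le 1/(4|x|_k^2)$; both accomplish the same thing.
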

\begin{proof}
Let $r\in (0,r_0)$ and    $B^+_{r,\delta}$, $S^+_{r,\delta}$ be as in \eqref{def_BSrdelta} for any $\delta \in (0,r)$.
Then, by \eqref{hp_alpha}, 
\begin{multline}\label{dim_pohozaev_identity:1}
\dive\left(y^{1-2s} \frac{\alpha}{|x|^2_k+\la^2} U_\la^2 \, z\right)\\
= y^{1-2s}\left(2\frac{\alpha}{|x|^2_k+\la^2} U_\la \nabla U_\la \cdot z+(N+2-2s) \frac{\alpha}{|x|^2_k+\la^2} U_\la^2-2\frac{\alpha |x|^2_k}{(|x|^2_k+\la^2)^2}U_\la^2\right)
\end{multline}
and $y^{1-2s} \frac{\alpha}{|x|^2_k+\la^2} U_\la^2 z \in W^{1,1}(B^+_{r,\delta},\R^{N+1})$.
Integrating \eqref{dim_pohozaev_identity:1} by part in $B_{r,\delta}^+$ we obtain 
\begin{multline}\label{dim_pohozaev_identity:2}
r\int_{S_{r,\delta}^+}y^{1-2s}\frac{\alpha}{|x|^2_k+\la^2} U_\la^2 \, dS- \delta^{2-2s} \int_{B'_{\sqrt{r^2-\delta^2}}} \frac{\alpha}{|x|^2_k+\la^2} U_\la^2(x,\delta) \, dx\\
= \int_{B_{r,\delta}^+}y^{1-2s}\left(2\frac{\alpha}{|x|^2_k+\la^2} U_\la \nabla U_\la \cdot z+(N+2-2s) \frac{\alpha}{|x|^2_k+\la^2} U_\la^2-2\frac{\alpha |x|^2_k}{(|x|^2_k+\la^2)^2}U_\la^2\right) \, dz.
\end{multline}
We claim that there exists a sequence $\delta_n \to 0^+$  as $n \to \infty$ such that 
\begin{equation}\label{dim_pohozaev_identity:3}
\lim_{n \to \infty}\delta_n^{2-2s} \int_{B'_{\sqrt{r^2-\delta_n^2}}} \frac{\alpha}{|x|^2_k+\la^2} U_\la^2(x,\delta_n) \, dx=0
\end{equation}
arguing by contradiction. If \eqref{dim_pohozaev_identity:3} does not hold, then there exists a constant $C >0$  and $\delta_0 \in (0,r)$ such that $(0,\delta_0) \times B_r' \subseteq B_{r_0}^+$ and  
\begin{equation}\label{dim_pohozaev_identity:4}
\delta^{1-2s} \int_{B'_{\sqrt{r^2-\delta^2}}} \frac{\alpha}{|x|^2_k+\la^2} U_\la^2(x,\delta) \, dx \ge \frac{C}{\delta}
\end{equation}
for any $\delta \in (0,\delta_0)$.
Integrating over $(0,\delta_0)$ we obtain 
\begin{equation}\label{dim_pohozaev_identity:5}
+\infty >\int_{0}^{\delta_0}\delta^{1-2s} \left(\int_{B'_r} \frac{\alpha}{|x|^2_k+\la^2} U_\la^2(x,\delta) \, dx\right) d\delta \ge \int_{0}^{\delta_0}\frac{C}{\delta} \, d \delta,
\end{equation} 
a contradiction in view of the Fubini-Tonelli Theorem. Passing to the limit for  $\delta =\delta_n$ as $n \to \infty$ in \eqref{dim_pohozaev_identity:2} we conclude that 
\begin{multline}\label{dim_pohozaev_identity:6}
\int_{B_r^+}y^{1-2s}\frac{\alpha}{|x|^2_k+\la^2} U_\la \nabla U_\la \cdot z \, dz =\frac{r}{2}\int_{S_r^+}y^{1-2s}\frac{\alpha}{|x|^2_k+\la^2} U_\la^2 \, dS\\
- \frac{1}{2}\int_{B_r^+}y^{1-2s}\left((N+2-2s)\frac{\alpha}{|x|^2_k+\la^2} U_\la^2-2\frac{\alpha |x|^2_k}{(|x|^2_k+\la^2)^2}U_\la^2\right) \, dz.
\end{multline}

Now we pass to the limit as $\la \to 0^+$, eventually along a  suitable sequence $\la_n \to 0^+$, in each term of \eqref{eq_pohozaev_identity_Ula} taking into account    \eqref{dim_pohozaev_identity:6}.
We recall that, by Theorem \ref{theorem_aproximaition}, $U_\la \to U$ strongly in $H^1(B_r^+,y^{1-2s})$ for any $r \in (0,r_0]$.
It is clear that for any $ r \in (0,r_0)$
\begin{equation}\label{dim_pohozaev_identity:6.1}
\lim_{\la \to 0^+}\int_{B_r^+}y^{1-2s} |\nabla U_\la |^2 \, dz=\int_{B_r^+}y^{1-2s} |\nabla U |^2 \, dz.
\end{equation}
Furthermore there exists a sequence $\la_n \to 0$ as $n \to \infty$  and  $G \in L^2(B_{r_0}^+,y^{1-2s}|x|_k^{-2})$ such that 
\begin{align}
&(N+2-2s)\frac{\alpha}{|x|^2_k+\la_n^2} U_{\la_n}^2-2\frac{\alpha |x|^2_k}{(|x|^2_k+\la_n^2)^2}U_{\la_n}^2 
\to (N-2s)\frac{\alpha}{|x|^2_k} U^2 \quad \text{ for a.e. } z \in B_{r_0}^+,\label{dim_pohozaev_identity:6.2}\\
&\frac{\alpha}{|x|^2_k+\la_n^2}U_{\la_n}-\frac{\alpha}{|x|^2_k}U \to 0 \quad \text{ for a.e. } z \in B_{r_0}^+\label{dim_pohozaev_identity:6.2.1},\\
& |U_{\la_n}|\le |G| \quad \text{ for a.e. } z \in B_{r_0}^+ \text{ and any } n \in \mathbb{N}.
\end{align}
Then by the Dominated Convergence Theorem we conclude that for any $r \in (0,r_0)$ 
\begin{multline}\label{dim_pohozaev_identity:6.3}
\lim_{n \to \infty}\int_{B_r^+}y^{1-2s}\left((N+2-2s)\frac{\alpha}{|x|^2_k+\la_n^2} U_{\la_n}^2-2\frac{\alpha |x|^2_k}{(|x|^2_k+\la_n^2)^2}U_{\la_n}^2\right) \, dz\\
=(N-2s)\int_{B_r^+}y^{1-2s}\frac{\alpha}{|x|^2_k} U^2\, dz
\end{multline}
and 
\begin{equation}\label{dim_pohozaev_identity:6.3.1}
\lim_{n \to \infty}\int_{B_r^+}y^{1-2s}\left|\frac{\alpha}{|x|^2_k+\la_n^2}U_{\la_n}^2-\frac{\alpha}{|x|^2_k}U^2\right| \, dz=0.
\end{equation}
By \eqref{hp_g}, \eqref{ineq_h_D+H}, \eqref{ineq_hardy_with_boundary} and Proposition \ref{prop_trace_Sr+}
\begin{equation}\label{dim_pohozaev_identity:6.4}
\lim_{\la \to 0^+}\int_{B_r'} |Ng +\nabla g\cdot x|\, |\Tr(U_\la)-\Tr(U)|^2 \, dx=0
\end{equation}
hence, for any $r \in (0,r_0)$,
\begin{equation}\label{dim_pohozaev_identity:6.5}
\lim_{\la \to 0^+}\int_{B_r'} (Ng +x\cdot \nabla g) |\Tr(U_\la)|^2 \, dx= \int_{B_r'} (Ng +\nabla g\cdot x)|\Tr(U)|^2\, dx.
\end{equation}
By Fatou's Lemma and the Coarea Formula,
\begin{equation}\label{dim_pohozaev_identity:7}
\int_{0}^{r_0}\left(\liminf_{\la \to 0^+}\int_{S_r^+}y^{1-2s}|\nabla U_\la -\nabla U|^2 \, dS \right) dr \le \liminf_{\la \to 0^+}\int_{B_{r_0}^+}y^{1-2s}|\nabla U_\la -\nabla U|^2 \, dS=0,
\end{equation}
and  so 
\begin{equation}\label{dim_pohozaev_identity:8}
\liminf_{\la \to 0^+}\int_{S_{r}^+}y^{1-2s}|\nabla U_\la|^2 \, dS= \int_{S_{r}^+}y^{1-2s}|\nabla U|^2 \, dS
\end{equation}
 for a.e. $r \in (0,r_0)$. Similarly, for a.e. $r \in (0,r_0)$
\begin{equation}\label{dim_pohozaev_identity:9}
\liminf_{\la \to 0^+}\int_{S_{r}^+}y^{1-2s}|\nabla U_\la\cdot \nu|^2 \, dS= \int_{S_{r}^+}y^{1-2s}|\nabla U\cdot \nu|^2 \, dS,
\end{equation}
and, by \eqref{dim_pohozaev_identity:6.4} and  Fatou's Lemma, 
\begin{equation}\label{dim_pohozaev_identity:10}
\liminf_{\la \to 0^+}\int_{S'_r}g |\Tr(U_\la)|^2 \, d'S= \int_{S'_r}g |\Tr(U)|^2\, dS'.
\end{equation}
Furthermore passing to the limit for $\la=\la_n$ as $n \to \infty$ and $\la_n$ is as  in  \eqref{dim_pohozaev_identity:6.2.1}, we obtain
\begin{equation}\label{dim_pohozaev_identity:11}
\lim_{n  \to \infty}\int_{S_{r}^+}y^{1-2s}\frac{\alpha}{|x|^2_k+\la_n^2}U_{\la_n}^2 \,dS=\int_{S_{r}^+}y^{1-2s}\frac{\alpha}{|x|^2_k}U^2 \, dS
\end{equation}
for a.e. $r \in (0,r_0)$, thanks to  Fatou's Lemma and \eqref{dim_pohozaev_identity:6.3.1}.
In conclusion \eqref{eq_pohozaev_ideentity} holds.
\end{proof}

\section{The Monotonicity Formula}\label{sec_monotonicity} 
Let $U$ be a non-trivial solution of \eqref{eq_extension_with_equation}, let $r_0$ be  as in \eqref{def_r0}. For any $r \in (0,r_0]$ we define the height and energy functions respectively as 
\begin{align}
&H(r):=\frac{1}{r^{N+1-2s}}\int_{S_r^+} y^{1-2s}U^2 \, dS,\label{def_H}\\
&D(r):=\frac{1}{r^{N-2s}}\left(\int_{B_r^+} y^{1-2s}\left(|\nabla U|^2-\frac{\alpha}{|x|^2_k}U^2\right) \, dz-c_{N,s}\int_{B_r'}g|\Tr(U)|^2 \, dx\right).\label{def_D}
\end{align}

The proof of the next Proposition is very similar to \cite[Lemma 3.1]{DFV} and we omit it.
We also recall that  $\nu$ is the outer normal vector to $B_r^+$ on $S_r^+$, that is $\nu(z) =\frac{z}{|z|}$.
\begin{proposition}\label{prop_H'}
We have that $H \in W^{1,1}_{loc}((0,r_0])$ and 
\begin{equation}
H'(r)=\frac{2}{r^{N+1-2s}}\int_{S_r^+} y^{1-2s}\pd{U}{\nu}U\, dS=\frac{2}{r}D(r),\label{eq_H'} 
\end{equation}
in a distributional sense and for a.e. $ r \in (0,r_0)$.
\end{proposition}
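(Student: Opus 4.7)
The plan is to first rewrite $H$ as an integral on the fixed hemisphere $\mathbb{S}^+$ via the change of variables $z = r\theta$, which eliminates the $r$-dependence of the domain of integration. Since $y = r\theta_{N+1}$ and $dS = r^N d\sigma$ on $S_r^+$, we obtain
\begin{equation*}
H(r) = \int_{\mathbb{S}^+} \theta_{N+1}^{1-2s}\, U^2(r\theta)\, d\sigma.
\end{equation*}
This form makes the differentiation in $r$ natural: formally $H'(r) = 2\int_{\mathbb{S}^+} \theta_{N+1}^{1-2s}\, U(r\theta)\,\nabla U(r\theta)\cdot\theta\, d\sigma$, and returning to $S_r^+$ yields the first equality claimed.

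Rigorously, since $U\in H^1(B_{r_0}^+, y^{1-2s})$, the map $r\mapsto H(r)$ is only expected to be $W^{1,1}_{loc}((0,r_0])$. I would justify this by approximation: pick a sequence $U_j\in C^\infty(\overline{B_{r_0}^+})$ with $U_j \to U$ in $H^1(B_{r_0}^+, y^{1-2s})$, define the corresponding $H_j$, which is $C^1$ and satisfies $H_j'(r) = \frac{2}{r^{N+1-2s}}\int_{S_r^+} y^{1-2s} U_j\,\partial_\nu U_j\, dS$. The Coarea Formula (as used in Remark~\ref{remark_int_nabla_with_coarea}) implies $H_j \to H$ in $L^1_{loc}$ and $H_j' \to \frac{2}{r^{N+1-2s}}\int_{S_r^+} y^{1-2s} U\,\partial_\nu U\, dS$ in $L^1_{loc}$, identifying this as the distributional derivative of $H$, so that $H\in W^{1,1}_{loc}((0,r_0])$.

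For the second equality, I would apply Proposition~\ref{prop_extension_with_boundary} on $B_r^+$ with the test function $W = U$, valid for a.e. $r\in (0,r_0)$. Since $\nabla U\cdot z = r\,\nabla U\cdot \nu$ on $S_r^+$, this gives
\begin{equation*}
\int_{S_r^+} y^{1-2s}\, U\, \partial_\nu U\, dS = \int_{B_r^+} y^{1-2s}\!\left(|\nabla U|^2 - \tfrac{\alpha}{|x|_k^2}U^2\right) dz - c_{N,s}\int_{B_r'} g\,\Tr(U)^2\, dx = r^{N-2s}\, D(r).
\end{equation*}
Substituting into the expression for $H'(r)$ yields $H'(r) = \frac{2}{r}D(r)$.

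The main technical point is ensuring the coarea/approximation argument is compatible with the Muckenhoupt weight $y^{1-2s}$ and with the fact that $U$ is not necessarily smooth across the hyperplane $\{y=0\}$ — but this is exactly the setting covered by the trace theory of Proposition~\ref{prop_trace_Sr+} and the almost-everywhere validity of the identity in Proposition~\ref{prop_extension_with_boundary}, so the same restriction to a.e. $r$ appears here. The rest is a straightforward manipulation.
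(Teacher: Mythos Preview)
Your proposal is correct and follows the standard argument for such height-function derivatives; the paper itself omits the proof, referring to \cite[Lemma 3.1]{DFV}, and your approach---rescaling to the fixed hemisphere, approximating in $H^1(B_{r_0}^+,y^{1-2s})$ to justify the differentiation via the Coarea Formula, and then invoking Proposition~\ref{prop_extension_with_boundary} with $W=U$ for the second equality---is precisely that argument.
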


\begin{proposition} \label{prop_H_not_0}
Let $H$ be as in \eqref{def_H}. Then $H(r)>0$ for any $r \in (0,r_0]$.
\end{proposition}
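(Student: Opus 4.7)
The plan is to argue by contradiction: suppose there exists $r_1 \in (0, r_0]$ with $H(r_1) = 0$. First I would observe that this forces $U = 0$ almost everywhere on $S_{r_1}^+$ (with respect to surface measure, since $y > 0$ on $S_{r_1}^+$ outside the measure-zero set $S_{r_1}^+ \cap \{y = 0\}$). By the continuity of the trace operator in Proposition \ref{prop_trace_Sr+}, the trace of $U|_{B_{r_1}^+}$ on $S_{r_1}^+$ vanishes, so that $U|_{B_{r_1}^+}$ belongs to $H^1_{0, S_{r_1}^+}(B_{r_1}^+, y^{1-2s})$.

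Next I would test the weak formulation \eqref{eq_extension_with_equation} against $U$ itself on $B_{r_1}^+$. This is justified by approximating $U|_{B_{r_1}^+}$ in the norm $\norm{\cdot}_{H^1(B_{r_1}^+, y^{1-2s})}$ by smooth functions supported away from $S_{r_1}^+$ (which lie in $C_c^\infty(\Omega \times [0, +\infty))$ once extended by zero), using the density definition in \eqref{def_H10Br}. Passing to the limit yields
\[
\int_{B_{r_1}^+} y^{1-2s}\left(|\nabla U|^2 - \frac{\alpha}{|x|_k^2} U^2\right) dz - c_{N,s}\int_{B_{r_1}'} g\,|\Tr(U)|^2 \, dx = 0,
\]
which is exactly $\norm{U}_{g,\alpha,k,0}^2 = 0$ in the notation of Remark \ref{remark_equivalence_norm_H10Sr+}. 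Since $r_1 \le r_0$ and $\norm{\cdot}_{g,\alpha,k,0}$ is a norm equivalent to $\norm{\cdot}_{H^1(B_{r_1}^+, y^{1-2s})}$ on $H^1_{0, S_{r_1}^+}(B_{r_1}^+, y^{1-2s})$, I conclude that $U \equiv 0$ on $B_{r_1}^+$.

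Finally I would invoke the unique continuation principle for degenerate elliptic equations with $A_2$-Muckenhoupt weight from \cite{TZ}, following the line of reasoning already used in Remark \ref{reamrk_eigenfunction0}. Since $U$ solves the bulk equation in \eqref{prob_extension_with_equation} with bounded coefficients away from the measure-zero set $\Sigma_k$, and vanishes on the open set $B_{r_1}^+ \subset C$, unique continuation propagates the vanishing throughout $C \setminus \Sigma_k$, and hence throughout $C$. This contradicts the assumption that $U$ is non-trivial.

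The main obstacle is the final unique continuation step: one has to verify that the result of \cite{TZ} applies to $U$ on $C$, given that $U$ also carries the boundary-type condition on $B_{r_0}'$ involving $g$, and that $C \setminus \Sigma_k$ remains connected so that vanishing on $B_{r_1}^+$ propagates globally. The reduction from $H(r_1) = 0$ to $\norm{U}_{g,\alpha,k,0}^2 = 0$ is essentially routine once Proposition \ref{prop_trace_Sr+} and Remark \ref{remark_equivalence_norm_H10Sr+} are in place.
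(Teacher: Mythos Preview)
Your proof is correct and follows essentially the same route as the paper: argue by contradiction, show $U\equiv 0$ on $B_{r_1}^+$ via the equivalent norm of Remark~\ref{remark_equivalence_norm_H10Sr+}, then propagate the vanishing by unique continuation. The paper phrases the first step through Proposition~\ref{prop_extension_with_boundary} instead of your explicit density argument, and for the second step it invokes classical (non-weighted) unique continuation \cite{W} rather than \cite{TZ}, since on $C=\Omega\times(0,\infty)$ the weight $y^{1-2s}$ is locally smooth and positive and the equation is uniformly elliptic away from $\Sigma_k$; in particular the Neumann datum involving $g$ on $\Omega\times\{0\}$ plays no role in this interior continuation step, so the obstacle you flag is not actually present.
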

\begin{proof}
Assume by contradiction that there exists $r \in (0,r_0]$ such that $H(r)=0$. From \eqref{eq_extension_with_boundary} and Remark \ref{remark_equivalence_norm_H10Sr+} we deduce that $U\equiv 0$ on $B_r^+$. Let  $\Sigma_k$ be as in \eqref{def_Sigma}. The function $U$ is a solution of an elliptic equation with bounded coefficients away from $\Sigma_k$ and $\R^N \times\{0\}$. Then the claim follows from    classical unique continuation principles, see for example  \cite{W}.
\end{proof}

\begin{proposition}\label{prop_D'}
The function $D$ defined in \eqref{def_D} belongs to $W^{1,1}_{loc}((0,r_0])$ and 
\begin{equation}\label{eq_D'}
D'(r)=\frac{2}{r^{N+1-2s}}\left( r\int_{S_r^+} y^{1-2s} |\nabla U \cdot \nu|^2 \, dS-c_{N,s}\int_{B_r'} \left(sg +\frac{1}{2}x\cdot\nabla g\right) |\Tr(U)|^2 \, dx \right)
\end{equation}
in a distributional sense and for a.e. $r \in (0,r_0)$.
\end{proposition}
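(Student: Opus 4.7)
The plan is to write $D(r)=r^{-(N-2s)}I(r)$, where
\[
I(r):=\int_{B_r^+} y^{1-2s}\left(|\nabla U|^2-\frac{\alpha}{|x|^2_k}U^2\right)dz-c_{N,s}\int_{B_r'}g|\Tr(U)|^2\,dx ,
\]
differentiate $I$ by the Coarea Formula, and then use the Pohozaev identity of Proposition \ref{prop_pohozaev_ideentity} to rewrite the spherical integral that appears in $I'(r)$.

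First I would show $I\in W^{1,1}_{\mathrm{loc}}((0,r_0])$: since $y^{1-2s}(|\nabla U|^2-\alpha|x|_k^{-2}U^2)\in L^1(B_{r_0}^+)$ (the Hardy inequality \eqref{ineq_hardy_with_boundary} guarantees integrability of the Hardy term) and $g|\Tr(U)|^2\in L^1(B_{r_0}')$ (use \eqref{hp_g}, Proposition \ref{prop_h_D+H} and Proposition \ref{prop_trace_Sr+}), the Coarea Formula yields, for a.e.\ $r\in(0,r_0)$,
\begin{equation*}
I'(r)=\int_{S_r^+} y^{1-2s}\left(|\nabla U|^2-\frac{\alpha}{|x|_k^2}U^2\right)dS-c_{N,s}\int_{S_r'} g\,|\Tr(U)|^2\,dS'.
\end{equation*}
From this and the product rule we get
\[
D'(r)=-\frac{N-2s}{r^{N+1-2s}}I(r)+\frac{1}{r^{N-2s}}I'(r).
\]

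Next I would solve the Pohozaev identity \eqref{eq_pohozaev_ideentity} for the spherical integral $\int_{S_r^+}y^{1-2s}(|\nabla U|^2-\alpha|x|_k^{-2}U^2)\,dS$, obtaining for a.e.\ $r\in(0,r_0)$
\begin{align*}
\int_{S_r^+}&y^{1-2s}\Bigl(|\nabla U|^2-\tfrac{\alpha}{|x|^2_k}U^2\Bigr)dS
= 2\int_{S_r^+}y^{1-2s}|\nabla U\cdot\nu|^2\,dS
+c_{N,s}\int_{S_r'}g|\Tr(U)|^2\,dS' \\
&\quad+\frac{N-2s}{r}\int_{B_r^+}y^{1-2s}\Bigl(|\nabla U|^2-\tfrac{\alpha}{|x|^2_k}U^2\Bigr)dz
-\frac{c_{N,s}}{r}\int_{B_r'}(Ng+x\cdot\nabla g)|\Tr(U)|^2\,dx.
\end{align*}
Substituting this into the expression for $I'(r)$, the boundary terms on $S_r'$ cancel and one obtains
\begin{equation*}
I'(r)=\frac{N-2s}{r}I(r)+2\int_{S_r^+}y^{1-2s}|\nabla U\cdot\nu|^2\,dS-\frac{c_{N,s}}{r}\int_{B_r'}(2sg+x\cdot\nabla g)|\Tr(U)|^2\,dx,
\end{equation*}
after rewriting $(N-2s)\int_{B_r^+}y^{1-2s}(|\nabla U|^2-\alpha|x|_k^{-2}U^2)\,dz=(N-2s)I(r)+(N-2s)c_{N,s}\int_{B_r'}g|\Tr(U)|^2dx$. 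Plugging into the product-rule identity for $D'(r)$, the $(N-2s)r^{-1}I(r)$ terms cancel and one ends up exactly with \eqref{eq_D'}.

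The routine but only real technicality is the measure-theoretic justification of $I\in W^{1,1}_{\mathrm{loc}}$ and of the Coarea-based identity for $I'$; once $\int_{S_r^+}y^{1-2s}(|\nabla U|^2-\alpha|x|_k^{-2}U^2)\,dS$ and $\int_{S_r'}g|\Tr(U)|^2\,dS'$ are defined as a.e.\ radial traces of integrable functions (so that both the Coarea-derivative formula and the Pohozaev identity \eqref{eq_pohozaev_ideentity} hold on a common full-measure set of radii), the rest is the algebraic cancellation described above. The hard part is conceptually already behind us in Proposition \ref{prop_pohozaev_ideentity}; here the work is merely to combine that identity cleanly with the differentiation under the integral sign.
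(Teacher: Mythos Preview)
Your proof is correct and follows essentially the same approach as the paper: differentiate $D(r)=r^{-(N-2s)}I(r)$ via the Coarea Formula and then substitute the Pohozaev identity \eqref{eq_pohozaev_ideentity} to obtain \eqref{eq_D'}. Your write-up is in fact more explicit about the algebraic cancellations than the paper's, which simply states that \eqref{eq_D'} follows from the Coarea computation and \eqref{eq_pohozaev_ideentity}.
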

\begin{proof}
By the Coarea Formula 
\begin{align}\label{dim_D':1}
D'(r)=(2s-N)&r^{-N+2s-1}\left(\int_{B_r^+} y^{1-2s}\left(|\nabla U|^2-\frac{\alpha}{|x|^2_k}U^2\right) \, dz-c_{N,s}\int_{B_r'}g|\Tr(U)|^2 \, dx\right)\\
+&r^{-N+2s}\left(\int_{S_r^+} y^{1-2s}\left(|\nabla U|^2-\frac{\alpha}{|x|^2_k}U^2\right) \, dS-c_{N,s}\int_{S_r'}g|\Tr(U)|^2 \, dS'\right)
\end{align}
and so \eqref{eq_D'} follows from \eqref{eq_pohozaev_ideentity}. Furthermore  $D \in W^{1,1}_{loc}((0,r_0])$ by  \eqref{eq_D'}, \eqref{def_D} and  the Coarea Formula.
\end{proof}

Let us define, for any $r \in (0,r_0]$, the frequency function $\mc{N}$ as 
\begin{equation}\label{def_N}
\mc{N}(r):=\frac{D(r)}{H(r)}.
\end{equation}
In view of Proposition \ref{prop_H_not_0} the definition of $\mc{N}$ is well-posed.
\begin{proposition}\label{prop_N'}
We have that $\mc{N} \in W^{1,1}_{loc}((0,r_0])$ and for any $r \in (0,r_0]$
\begin{equation}\label{ineq_N_lower_estimate}
\mathcal{N}(r)>-\frac{N-2s}{2}.
\end{equation}
Furthermore 
\begin{equation}\label{eq_N'}
\mathcal{N}'(r)=v_1(r)+v_2(r)
\end{equation}
in a distributional sense and for a.e. $r \in (0,r_0)$, where 
\begin{equation}\label{def_v1}
v_1(r):=\frac{2r\left(\left(\int_{S_r^+}y^{1-2s}U^2\, dS\right)\left(\int_{S_r^+}y^{1-2s}\left|\pd{U}{\nu}\right|^2\, dS\right)-\left(\int_{S_r^+}y^{1-2s}U \pd{U}{\nu} \, dS\right)^2\right)}
{\left(\int_{S_r^+}y^{1-2s}U^2\, dS\right)^2},
\end{equation}
and 
\begin{equation}\label{def_v2}
v_2(r):=-c_{N,s}\frac{\int_{B_r'} \left(2sg +x\cdot\nabla g\right) |\Tr(U)|^2 \, dx}{\int_{S_r^+}y^{1-2s}U^2 \, dS}.
\end{equation}
Finally 
\begin{equation}\label{ineq_v1_positive}
v_1(r) \ge 0 \quad \text{ for any  } r \in  (0,r_0].
\end{equation}
\end{proposition}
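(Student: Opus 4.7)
The plan is to derive everything from the Pohozaev-type identity already encoded in Propositions \ref{prop_H'} and \ref{prop_D'}, together with Proposition \ref{prop_nabla_D+H} and Cauchy--Schwarz.

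For the Sobolev regularity and the lower bound, I first note that by Proposition \ref{prop_H'} and Proposition \ref{prop_D'}, both $H$ and $D$ lie in $W^{1,1}_{loc}((0,r_0])$, so they admit continuous representatives on $(0,r_0]$. By Proposition \ref{prop_H_not_0}, $H>0$ on $(0,r_0]$, hence on every compact subinterval $[a,r_0]$ with $a>0$ the function $H$ is bounded away from zero. This makes $1/H$ locally Lipschitz, and consequently $\mc{N}=D/H\in W^{1,1}_{loc}((0,r_0])$. To establish \eqref{ineq_N_lower_estimate}, I apply Proposition \ref{prop_nabla_D+H} with $W=U$ and use \eqref{def_r0}, which ensures the prefactor on the right-hand side is strictly positive. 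Since $H(r)>0$ forces the bracketed quantity on the right to be strictly positive, I obtain after dividing by $r^{N-2s}$
\begin{equation*}
D(r)+\frac{N-2s}{2}H(r)>0,
\end{equation*}
which rearranges to $\mc{N}(r)>-\tfrac{N-2s}{2}$.

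For the derivative formula, I compute in a distributional sense
\begin{equation*}
\mc{N}'(r)=\frac{D'(r)}{H(r)}-\frac{D(r)\,H'(r)}{H(r)^2}.
\end{equation*}
Substituting $H'(r)=2D(r)/r$ from Proposition \ref{prop_H'} and using the identity $D(r)=\frac{r}{2}H'(r)=r^{-(N-2s)}\int_{S_r^+}y^{1-2s}U\,\partial_\nu U\,dS$, the subtracted term becomes exactly
\begin{equation*}
\frac{2r\left(\int_{S_r^+}y^{1-2s}U\,\partial_\nu U\,dS\right)^{2}}{\left(\int_{S_r^+}y^{1-2s}U^2\,dS\right)^{2}}.
\end{equation*}
Inserting the explicit expression for $D'(r)$ from Proposition \ref{prop_D'}, the factors $r^{N+1-2s}$ cancel and the quotient $D'/H$ decomposes into one piece involving $\int_{S_r^+}y^{1-2s}|\nabla U\cdot\nu|^2\,dS$ and one involving $\int_{B_r'}(2sg+x\cdot\nabla g)|\Tr(U)|^2\,dx$. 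Grouping the first piece with the subtracted boundary square term yields $v_1(r)$, while the second piece is $v_2(r)$, proving \eqref{eq_N'}.

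The nonnegativity $v_1(r)\ge 0$ is the Cauchy--Schwarz inequality applied in $L^2(S_r^+,y^{1-2s}\,dS)$ to the pair $U$ and $\partial_\nu U$:
\begin{equation*}
\left(\int_{S_r^+}y^{1-2s}U\,\partial_\nu U\,dS\right)^{2}\le \left(\int_{S_r^+}y^{1-2s}U^2\,dS\right)\left(\int_{S_r^+}y^{1-2s}|\partial_\nu U|^2\,dS\right),
\end{equation*}
which makes the numerator of $v_1$ nonnegative. The main technical point is not any of these manipulations individually but rather ensuring that the Pohozaev identity of Proposition \ref{prop_pohozaev_ideentity} was invoked with enough regularity to justify the identity for $D'$; this has already been arranged in Proposition \ref{prop_D'}, so here the argument is essentially algebraic once that input is in place.
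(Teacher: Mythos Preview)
Your proof is correct and follows the same route as the paper's: regularity of $\mc N$ from $D,1/H\in W^{1,1}_{loc}$, the lower bound from Proposition~\ref{prop_nabla_D+H} (i.e.\ \eqref{ineq_nabla_D+H}) applied with $W=U$, the derivative formula from the quotient rule combined with \eqref{eq_H'} and \eqref{eq_D'}, and the nonnegativity of $v_1$ from Cauchy--Schwarz in $L^2(S_r^+,y^{1-2s})$. You spell out a bit more explicitly why $1/H$ is locally Lipschitz and how the algebraic regrouping into $v_1+v_2$ works, but the argument is identical in substance to the paper's proof.
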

\begin{proof}
Since $1/H,D \in W^{1,1}_{loc}((0,r_0])$ it follows that $\mc{N} \in W^{1,1}_{loc}((0,r_0])$.  We can deduce \eqref{ineq_N_lower_estimate} directly from \eqref{ineq_nabla_D+H} and \eqref{def_N}.
Furthermore by \eqref{eq_H'}
\begin{equation}\label{dim_N'}
\frac{d}{dr}\mc{N}'(r)=\frac{D'(r)H(r)-D(r)H'(r)}{H^2(r)}=\frac{D'(r)H(r)-\frac{r}{2}(H'(r))^2}{H^2(r)}
\end{equation}
and so \eqref{eq_N'} follows from  \eqref{def_H}, \eqref{def_D} and \eqref{eq_D'}. 
Finally \eqref{ineq_v1_positive} is a consequence of the Cauchy-Schwartz inequality in $L^2(S_r^+,y^{1-2s})$ between the vectors $U$ and $\pd{U}{\nu}$.
\end{proof}

\begin{proposition}\label{prop_ineq_v2}
There exists a constant $C>0$ such that 
\begin{equation}\label{ineq_v2}
|v_2(r)| \le Cr^{-1+\e}\left(\mathcal{N}(r)+\frac{N-2s}{2}\right) \quad \text{ for any } r \in (0,r_0].
\end{equation}
\end{proposition}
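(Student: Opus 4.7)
The plan is to reduce the desired bound for $|v_2(r)|$ to the coercivity inequalities already established in Section \ref{sec_inequalities_and_extension}. The key observation is that the potential appearing in the numerator of $v_2$ is $h(x):=2sg(x)+x\cdot\nabla g(x)$, and by \eqref{hp_g} one has $|h(x)|\le (2s+1)C_g|x|^{-2s+\e}$, so $h$ satisfies hypothesis \eqref{hp_h} of Proposition \ref{prop_h_D+H}. Applying that proposition with $V=U$ yields
\begin{equation*}
\int_{B_r'}|2sg+x\cdot\nabla g|\,|\Tr(U)|^2\,dx\le k_{N,s,h}\,r^\e\, I(r),
\end{equation*}
where, for brevity,
\begin{equation*}
I(r):=\int_{B_r^+}y^{1-2s}|\nabla U|^2\,dz-\int_{B_r^+}y^{1-2s}\frac{\alpha}{|x|_k^2}U^2\,dz+\frac{N-2s}{2r}\int_{S_r^+}y^{1-2s}U^2\,dS.
\end{equation*}

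The next step is to control $I(r)$ by the monotonicity quantity $r^{N-2s}\bigl(D(r)+\frac{N-2s}{2}H(r)\bigr)$. First, Proposition \ref{prop_hardy_with_boundary} estimates the term $\bigl|\int_{B_r^+}y^{1-2s}\frac{\alpha}{|x|_k^2}U^2\,dz\bigr|$ by a constant multiple of $\int_{B_r^+}y^{1-2s}|\nabla U|^2\,dz+\frac{N-2s}{2r}\int_{S_r^+}y^{1-2s}U^2\,dS$, so $I(r)$ itself is bounded by a constant times this same sum. Then Proposition \ref{prop_nabla_D+H} with $W=U$, valid for $r\in(0,r_0]$, combined with the definitions \eqref{def_H} and \eqref{def_D}, bounds the sum in turn by a constant multiple of $r^{N-2s}\bigl(D(r)+\frac{N-2s}{2}H(r)\bigr)$. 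Combining these two steps gives
\begin{equation*}
I(r)\le C\,r^{N-2s}\left(D(r)+\frac{N-2s}{2}H(r)\right)
\end{equation*}
for some $C>0$ independent of $r\in(0,r_0]$.

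Finally, the estimate follows by dividing by the denominator of $v_2$: since $\int_{S_r^+}y^{1-2s}U^2\,dS=r^{N+1-2s}H(r)$ and $H(r)>0$ by Proposition \ref{prop_H_not_0}, one obtains
\begin{equation*}
|v_2(r)|\le c_{N,s}\,\frac{k_{N,s,h}\,r^\e\, I(r)}{r^{N+1-2s}H(r)}\le C'\,r^{-1+\e}\left(\mc{N}(r)+\frac{N-2s}{2}\right),
\end{equation*}
which is precisely \eqref{ineq_v2}. The argument is essentially a chain of three bounds; all the non-trivial ingredients (the Hardy inequality with boundary term, the trace-potential bound of Proposition \ref{prop_h_D+H}, and the coercivity of Proposition \ref{prop_nabla_D+H}) are already available, so no serious obstacle is anticipated beyond keeping track of constants.
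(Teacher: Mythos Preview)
Your proof is correct and follows essentially the same route as the paper, which records the argument in a single line citing \eqref{hp_g}, \eqref{ineq_h_D+H}, \eqref{ineq_nabla_D+H} and \eqref{def_v2}. You have simply unpacked that chain, including the intermediate use of Proposition \ref{prop_hardy_with_boundary} to pass from $I(r)$ to the pure gradient-plus-boundary quantity before invoking \eqref{ineq_nabla_D+H}; this extra step is indeed needed when $\alpha<0$ and is implicit in the paper's citation.
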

\begin{proof}
The claim follows from \eqref{hp_g}, \eqref{ineq_h_D+H}, \eqref{ineq_nabla_D+H} and \eqref{def_v2}.
\end{proof}

\begin{proposition}\label{prop_N_upper}
There exists a constant $C_1>0$ such that 
\begin{equation}\label{ineq_N_upper_estimate}
\mc{N}(r)\le C_1 \quad \text{ for any } r \in (0,r_0].
\end{equation}
\end{proposition}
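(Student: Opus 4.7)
The plan is to exploit the differential inequality satisfied by $\mathcal{N}$ coming from Proposition \ref{prop_N'} and Proposition \ref{prop_ineq_v2}, together with the lower bound \eqref{ineq_N_lower_estimate}, and to integrate it in the standard way. Since $\mathcal{N}(r) + \frac{N-2s}{2} > 0$ on $(0, r_0]$, I would introduce the shifted frequency
\begin{equation*}
\widetilde{\mathcal{N}}(r) := \mathcal{N}(r) + \frac{N-2s}{2},
\end{equation*}
which is strictly positive and lies in $W^{1,1}_{\rm loc}((0,r_0])$ with the same distributional derivative as $\mathcal{N}$.

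From $\mathcal{N}'(r) = v_1(r) + v_2(r)$, the positivity \eqref{ineq_v1_positive} of $v_1$, and the estimate \eqref{ineq_v2}, I obtain
\begin{equation*}
\widetilde{\mathcal{N}}'(r) \ge v_2(r) \ge -|v_2(r)| \ge -C\, r^{-1+\varepsilon}\, \widetilde{\mathcal{N}}(r)
\end{equation*}
for a.e. $r \in (0, r_0)$. Dividing by $\widetilde{\mathcal{N}}(r) > 0$ and recognizing the left-hand side as the distributional derivative of $\log \widetilde{\mathcal{N}}$, this gives
\begin{equation*}
\bigl(\log \widetilde{\mathcal{N}}(r)\bigr)' \ge -C\, r^{-1+\varepsilon} \quad \text{for a.e. } r \in (0,r_0).
\end{equation*}

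Next I would integrate this inequality on an arbitrary interval $(r, r_0)$ with $0 < r \le r_0$; since $r^{-1+\varepsilon}$ is integrable near $0$ (this is where the hypothesis $\varepsilon > 0$ in \eqref{hp_g} is crucial), one gets
\begin{equation*}
\log \widetilde{\mathcal{N}}(r_0) - \log \widetilde{\mathcal{N}}(r) \ge -\frac{C}{\varepsilon}\bigl(r_0^{\varepsilon} - r^{\varepsilon}\bigr) \ge -\frac{C}{\varepsilon} r_0^{\varepsilon}.
\end{equation*}
Exponentiating yields
\begin{equation*}
\widetilde{\mathcal{N}}(r) \le \widetilde{\mathcal{N}}(r_0)\, \exp\!\Bigl(\tfrac{C}{\varepsilon} r_0^{\varepsilon}\Bigr)
\end{equation*}
for every $r \in (0, r_0]$, so setting $C_1 := \widetilde{\mathcal{N}}(r_0)\exp(C r_0^\varepsilon/\varepsilon) - \frac{N-2s}{2}$ gives the desired uniform upper bound on $\mathcal{N}$.

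The only subtle point is justifying that the distributional inequality $(\log \widetilde{\mathcal{N}})' \ge -C r^{-1+\varepsilon}$ can be integrated pointwise. This is fine because $\widetilde{\mathcal{N}} \in W^{1,1}_{\rm loc}((0,r_0])$ is positive and bounded away from zero on any compact subinterval of $(0, r_0]$, hence $\log \widetilde{\mathcal{N}}$ is itself absolutely continuous on such subintervals, and the fundamental theorem of calculus applies. The integrability of $r^{-1+\varepsilon}$ at the origin, ensured by \eqref{hp_g}, is what makes the argument work — without it one could only control $\mathcal{N}$ on subintervals bounded away from $0$. I do not expect any substantial obstacle beyond carefully checking these regularity details.
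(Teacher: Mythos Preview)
Your proposal is correct and follows essentially the same route as the paper: shift by $\frac{N-2s}{2}$ to get a positive quantity, use $v_1\ge 0$ and the bound on $v_2$ to derive the differential inequality $\widetilde{\mathcal N}'\ge -Cr^{-1+\varepsilon}\widetilde{\mathcal N}$, and integrate (Gronwall) over $(r,r_0)$ to obtain the same explicit bound $\mathcal N(r)\le -\frac{N-2s}{2}+\bigl(\mathcal N(r_0)+\frac{N-2s}{2}\bigr)e^{Cr_0^\varepsilon/\varepsilon}$. Your added remarks on the absolute continuity of $\log\widetilde{\mathcal N}$ are a welcome justification of a step the paper leaves implicit.
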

\begin{proof}
Thanks to Proposition \ref{prop_N'}, for a.e. $r \in (0,r_0)$
\begin{equation}\label{dim_N_upper:1}
\left(\mathcal{N}+\frac{N-2s}{2}\right)'(r) \ge v_2(r)\ge -Cr^{-1+\e}\left(\mathcal{N}(r)+\frac{N-2s}{2}\right).
\end{equation}
Hence an integration over $(r,r_0)$ yields
\begin{equation}\label{dim_N_upper:2}
\mathcal{N}(r) \le -\frac{N-2s}{2}+\left(\mc{N}(r_0)+\frac{N-2s}{2}\right)e^{\frac{C}{\e}r_0^\e}
\end{equation}
for any $r \in (0,r_0)$.
\end{proof}

\begin{proposition}\label{prop_limit_N}
The limit 
\begin{equation}\label{limit_N}
\gamma:=\lim_{r \to 0^+} \mc{N}(r)
\end{equation}
exists and it is finite.
\end{proposition}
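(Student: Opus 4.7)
The plan is to exploit the decomposition $\mathcal{N}' = v_1 + v_2$ from Proposition \ref{prop_N'} together with the sign information $v_1 \ge 0$ from \eqref{ineq_v1_positive} and the pointwise bound on $v_2$ from \eqref{ineq_v2}. The strategy is the standard Almgren trick: multiply $\mathcal{N}(r) + \tfrac{N-2s}{2}$ by a suitable integrating factor so that the resulting function is monotone on $(0,r_0]$, and then conclude by boundedness that the monotone function admits a finite limit at $0$.

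Concretely, set $W(r) := \mathcal{N}(r) + \tfrac{N-2s}{2}$, which is strictly positive on $(0,r_0]$ by \eqref{ineq_N_lower_estimate}. From Proposition \ref{prop_N'} and Proposition \ref{prop_ineq_v2} we obtain, in the distributional sense and a.e.\ on $(0,r_0)$,
\begin{equation*}
W'(r) = v_1(r) + v_2(r) \ge -C r^{-1+\varepsilon} W(r),
\end{equation*}
with $C>0$ the constant from \eqref{ineq_v2}. Define the auxiliary function
\begin{equation*}
F(r) := W(r)\exp\!\left(\tfrac{C}{\varepsilon}\, r^{\varepsilon}\right), \qquad r\in (0,r_0].
\end{equation*}
A direct computation, using $W \in W^{1,1}_{\mathrm{loc}}((0,r_0])$ from Proposition \ref{prop_N'}, gives
\begin{equation*}
F'(r) = \exp\!\left(\tfrac{C}{\varepsilon}\, r^{\varepsilon}\right)\bigl[W'(r) + C r^{-1+\varepsilon} W(r)\bigr] \ge 0
\end{equation*}
a.e.\ on $(0,r_0)$, so $F$ is monotone non-decreasing on $(0,r_0]$.

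The next step is to deduce the existence of the limit. Since $F$ is non-decreasing on $(0,r_0]$ and bounded from below by $0$ (as $W>0$), the limit $\ell := \lim_{r\to 0^+} F(r)$ exists and is finite. Because the exponential factor satisfies $\exp(\tfrac{C}{\varepsilon} r^{\varepsilon}) \to 1$ as $r\to 0^+$, we conclude that $\lim_{r\to 0^+} W(r) = \ell$, and hence
\begin{equation*}
\gamma = \lim_{r \to 0^+} \mathcal{N}(r) = \ell - \tfrac{N-2s}{2}
\end{equation*}
exists and is finite; finiteness from above is consistent with Proposition \ref{prop_N_upper}, while finiteness from below follows from $W > 0$.

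I do not anticipate a real obstacle here: the only delicate points are to justify that $F'\ge 0$ in the distributional sense (which is immediate once one checks that $F\in W^{1,1}_{\mathrm{loc}}((0,r_0])$ from the product rule and the fact that $r\mapsto \exp(\tfrac{C}{\varepsilon}r^{\varepsilon})$ is smooth on $(0,r_0]$), and to make sure the sign of the $v_2$ term is handled correctly, i.e.\ that we use the lower bound $v_2(r)\ge -Cr^{-1+\varepsilon}W(r)$ rather than the two-sided bound. Everything else is routine.
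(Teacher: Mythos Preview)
Your argument is correct and is essentially the same Almgren-type monotonicity trick as the paper's, just packaged slightly differently. The paper integrates $\mathcal{N}'=v_1+v_2$ directly, writing $\mathcal{N}(r)=\mathcal{N}(r_0)-\int_r^{r_0}v_1-\int_r^{r_0}v_2$, and then uses $v_1\ge 0$ (so the first integral is monotone as $r\downarrow 0$) together with $v_2\in L^1(0,r_0)$ (which follows from \eqref{ineq_v2} and the upper bound of Proposition~\ref{prop_N_upper}) to pass to the limit; you instead absorb the $v_2$ contribution into an integrating factor and obtain a single monotone bounded function $F$. Your route has the mild advantage of not invoking Proposition~\ref{prop_N_upper} separately, since the monotonicity of $F$ and $F\ge 0$ already yield existence and finiteness of the limit in one stroke; the paper's route is slightly more explicit about the separate roles of $v_1$ and $v_2$. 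Either way the content is the same.
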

\begin{proof}
Since $\mc{N} \in W^{1,1}_{loc}((0,r_0])$ by Proposition \ref{prop_N'}, for any $r \in (0,r_0)$
\begin{equation}\label{dim_limit_N:1}
\mc{N}(r)= \mc{N}(r_0) -\int_{r}^{r_0} \mathcal{N}'(r) \, dr =\mc{N}(r_0) -\int_{r}^{r_0}v_1(r) \, dr-\int_{r}^{r_0}v_2(r) \, dr.
\end{equation}
Since $v_1 \ge 0$ by \eqref{ineq_v1_positive} and $v_2 \in L^1(0,r_0)$ by \eqref{ineq_v2} and \eqref{ineq_N_upper_estimate}, we can pass to the limit as $r \to 0^+$ in \eqref{dim_limit_N:1} and conclude that the limit \eqref{limit_N} exists. From \eqref{ineq_N_lower_estimate} and \eqref{ineq_N_upper_estimate} it is finite.
\end{proof}

The proofs of the Proposition \ref{prop_ineq_H} and   \ref{prop_limit_H_exists_finite} are standard and we omit them, 
see for example \cite[Lemma 3.7, Lemma 4.6]{DFV},  \cite[Lemma 5.6, Lemma 6.4]{FFTmagnetic} or \cite[Lemma 5.9, Lemma 6.6]{FFT}.
\begin{proposition}\label{prop_ineq_H}
Let $\gamma$ be as in \eqref{limit_N}.  Then there exists a constant $K>0$ such that 
\begin{equation}\label{ineq_H_upper_estimate}
H(r) \le K r^{2\gamma} \quad \text{ for any } r \in (0,r_0).
\end{equation}
Furthermore for any $\sigma>0$ there exist a constant $K_\sigma$  such that 
\begin{equation}\label{ineq_H_lower_estimate}
H(r) \ge K_\sigma r^{2\gamma +\sigma}\quad \text{ for any } r \in (0,r_0).
\end{equation}
\end{proposition}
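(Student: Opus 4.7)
The key identity to exploit is the coupling between $H$ and $\mathcal{N}$ given by Proposition \ref{prop_H'}: since $H'(r)=\frac{2}{r}D(r)=\frac{2\mathcal{N}(r)}{r}H(r)$ and $H(r)>0$ on $(0,r_0]$ by Proposition \ref{prop_H_not_0}, we have
\begin{equation*}
(\log H)'(r)=\frac{2\mathcal{N}(r)}{r} \quad \text{for a.e. } r\in(0,r_0).
\end{equation*}
Integrating this between $r$ and $r_0$ and rewriting $-(2\gamma)\log r=\int_r^{r_0}\frac{2\gamma}{t}\,dt-2\gamma\log r_0$, we obtain the basic identity
\begin{equation*}
\log H(r)-2\gamma\log r=\log H(r_0)-2\gamma\log r_0-\int_r^{r_0}\frac{2(\mathcal{N}(t)-\gamma)}{t}\,dt,
\end{equation*}
so everything reduces to controlling $\int_r^{r_0}\frac{\mathcal{N}(t)-\gamma}{t}\,dt$ from below (for the upper bound on $H$) and from above (for the lower bound).

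For the upper bound \eqref{ineq_H_upper_estimate}, the plan is to show that $\mathcal{N}(t)-\gamma\ge -C\,t^{\varepsilon}$ for $t\in(0,r_0)$. Since $\gamma=\lim_{r\to 0^+}\mathcal{N}(r)$ and $\mathcal{N}'=v_1+v_2$ with $v_1\ge 0$ by \eqref{ineq_v1_positive}, we may write
\begin{equation*}
\mathcal{N}(t)-\gamma=\int_0^t v_1(\rho)\,d\rho+\int_0^t v_2(\rho)\,d\rho\ge -\int_0^t|v_2(\rho)|\,d\rho.
\end{equation*}
By Proposition \ref{prop_ineq_v2} and the uniform bound \eqref{ineq_N_upper_estimate} on $\mathcal{N}$, we have $|v_2(\rho)|\le C'\rho^{-1+\varepsilon}$, so $\int_0^t|v_2(\rho)|\,d\rho\le \frac{C'}{\varepsilon}t^{\varepsilon}$. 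Consequently $\frac{\mathcal{N}(t)-\gamma}{t}\ge -\frac{C'}{\varepsilon}t^{-1+\varepsilon}$, which is integrable at $0$; this bounds $\int_r^{r_0}\frac{2(\mathcal{N}(t)-\gamma)}{t}\,dt$ from below uniformly in $r\in(0,r_0)$, yielding \eqref{ineq_H_upper_estimate}.

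For the lower bound \eqref{ineq_H_lower_estimate}, fix $\sigma>0$. Since $\mathcal{N}(t)\to\gamma$ as $t\to 0^+$, we can choose $r_\sigma\in(0,r_0)$ such that $\mathcal{N}(t)-\gamma\le\sigma/2$ for all $t\in(0,r_\sigma)$. Writing
\begin{equation*}
\log H(r)-(2\gamma+\sigma)\log r=\log H(r_0)-(2\gamma+\sigma)\log r_0-\int_r^{r_0}\frac{2(\mathcal{N}(t)-\gamma-\sigma/2)}{t}\,dt,
\end{equation*}
we split the integral at $r_\sigma$: on $(r,r_\sigma)$ the integrand is nonpositive, while on $(r_\sigma,r_0)$ the integrand is bounded by the uniform bound on $\mathcal{N}$. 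Hence the right-hand side is bounded below by a constant $\log K_\sigma$ depending only on $\sigma$, $r_0$, $H(r_0)$, and $\mathcal{N}$, giving \eqref{ineq_H_lower_estimate}.

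The main (but minor) technical subtlety is justifying the fundamental theorem of calculus for $\log H$ and for $\mathcal{N}$: both belong to $W^{1,1}_{\rm loc}((0,r_0])$ by Propositions \ref{prop_H'} and \ref{prop_N'}, and in the integration for $\log H$ we use $H>0$ on $(0,r_0]$. Passing to the limit $r\to 0^+$ in the identity $\mathcal{N}(r)-\gamma=-\int_0^r(v_1+v_2)\,dt$ also uses that $v_1\ge 0$ and $v_2\in L^1(0,r_0)$, both of which are already established.
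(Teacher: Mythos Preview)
Your argument is correct and is exactly the standard one the paper alludes to (the paper omits the proof entirely, referring to \cite[Lemma 3.7]{DFV}, \cite[Lemma 5.6]{FFTmagnetic}, \cite[Lemma 5.9]{FFT}): integrate $(\log H)'=2\mathcal N/r$, use $\mathcal N(t)-\gamma=\int_0^t(v_1+v_2)\,d\rho\ge -\int_0^t|v_2|\,d\rho\ge -C''t^{\e}$ for the upper bound, and use $\mathcal N(t)\le \gamma+\sigma/2$ near $0$ for the lower bound. One small slip: in your final paragraph the identity should read $\mathcal N(r)-\gamma=\int_0^r(v_1+v_2)\,d\rho$, without the minus sign (you have it right in the body of the argument).
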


\begin{proposition}\label{prop_limit_H_exists_finite}
Let $\gamma$ be as in \eqref{limit_N}. Then there exists the limit 
\begin{equation}\label{limit_H_exists_finite}
\lim_{r \to 0^+}r^{-2\gamma}H(r)
\end{equation}
and it is finite.
\end{proposition}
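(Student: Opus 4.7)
The plan is to study the auxiliary function $G(r):=r^{-2\gamma}H(r)$, which is strictly positive on $(0,r_0]$ by Proposition \ref{prop_H_not_0} and bounded above by the constant $K$ of \eqref{ineq_H_upper_estimate}. Since $H\in W^{1,1}_{loc}((0,r_0])$ by Proposition \ref{prop_H'} and $r\mapsto r^{-2\gamma}$ is smooth on $(0,r_0]$, we have $G\in W^{1,1}_{loc}((0,r_0])$. Combining Proposition \ref{prop_H'} with \eqref{def_N}, a direct computation gives
\begin{equation*}
G'(r)=\frac{2}{r}\bigl(\mc{N}(r)-\gamma\bigr)\,G(r)\quad\text{for a.e. } r\in(0,r_0).
\end{equation*}
The strategy is to show that $G$ is, up to an explicit vanishing correction, non-decreasing, which forces the limit at $0^+$ to exist and be finite.

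The quantitative key is a one-sided estimate $\mc{N}(r)-\gamma\ge -C r^{\e}$ near $0$. Since $v_1\ge 0$ by \eqref{ineq_v1_positive}, the decomposition \eqref{eq_N'} yields $\mc{N}'(r)\ge v_2(r)$. Proposition \ref{prop_ineq_v2} combined with the upper bound $\mc{N}(r)\le C_1$ from Proposition \ref{prop_N_upper} and the lower bound $\mc{N}(r)>-\tfrac{N-2s}{2}$ from Proposition \ref{prop_N'} shows that $\mc{N}+\tfrac{N-2s}{2}$ is uniformly bounded on $(0,r_0]$, so $|v_2(r)|\le C r^{-1+\e}$ for some constant $C>0$. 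Hence $\mc{N}'(r)\ge -C r^{-1+\e}$; integrating this differential inequality on $(\rho,r)$ and then letting $\rho\to 0^+$ with the help of Proposition \ref{prop_limit_N} produces exactly the desired estimate $\mc{N}(r)-\gamma\ge -(C/\e)r^{\e}$.

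Inserting this lower bound together with the upper bound $G\le K$ into the formula for $G'$ yields
\begin{equation*}
G'(r)\ge -\frac{2CK}{\e}\,r^{-1+\e}\quad\text{for a.e. } r\in(0,r_0),
\end{equation*}
so that $\widetilde G(r):=G(r)+\frac{2CK}{\e^2}r^{\e}$ satisfies $\widetilde G'\ge 0$ a.e. and, being in $W^{1,1}_{loc}((0,r_0])$, is non-decreasing on $(0,r_0]$. Since $\widetilde G\ge 0$ and $\widetilde G(r)\le K+\frac{2CK}{\e^2}r_0^{\e}$, the limit $\lim_{r\to 0^+}\widetilde G(r)$ exists and lies in $[0,+\infty)$. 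Because the correction $\frac{2CK}{\e^2}r^{\e}$ vanishes as $r\to 0^+$, we conclude that $\lim_{r\to 0^+}G(r)=\lim_{r\to 0^+}\widetilde G(r)$ exists and is finite, which is the claim \eqref{limit_H_exists_finite}. The main obstacle in this approach is the pointwise bound $|v_2(r)|\le C r^{-1+\e}$ together with the subsequent Gronwall-type integration producing the lower estimate for $\mc{N}(r)-\gamma$; once this one-sided control is in place, the perturbed-monotonicity argument that concludes the proof is entirely routine, as in \cite[Lemma 3.7]{DFV} or \cite[Lemma 6.6]{FFT}.
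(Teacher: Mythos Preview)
Your proof is correct and follows precisely the standard perturbed-monotonicity argument that the paper itself defers to (see the sentence preceding Proposition~\ref{prop_ineq_H}, which cites \cite[Lemma 4.6]{DFV}, \cite[Lemma 6.4]{FFTmagnetic}, \cite[Lemma 6.6]{FFT} in place of a written proof). The key ingredients you use—namely the logarithmic derivative formula $G'(r)=\tfrac{2}{r}(\mc{N}(r)-\gamma)G(r)$, the one-sided estimate $\mc{N}(r)-\gamma\ge -\tfrac{C}{\e}r^{\e}$ obtained by integrating $\mc{N}'\ge v_2$ from $0$ to $r$, and the upper bound $G\le K$ from \eqref{ineq_H_upper_estimate}—are exactly those of the cited references, so there is nothing to add.
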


\section{The Blow-Up Analysis}\label{sec_the_blow_up_analysis}
Let $U$ be a non-trivial solution of \eqref{eq_extension_with_equation} and let $r_0$ be  as in \eqref{def_r0}.   For any $\la \in (0,r_0]$ let 
\begin{equation}\label{def_blow_up_solution}
V^\la(z):=\frac{U(\la z)}{\sqrt{H(\la)}}.
\end{equation}
By a change of variables, it is clear  that  $V^\la$ weakly solves 
\begin{equation}\label{prob_blow_up_solution}
\begin{cases}
-\dive(y^{1-2s}\nabla V^\la)= y^{1-2s} \frac{\alpha}{|x|_k^2}V^\la, \quad &\text{ in } B_{r_0/\la}^+,\\
-\lim_{y \to 0^+}y^{1-2s}\pd{V^\la}{y}=c_{N,s}\la^{2s}g(\la\cdot )\Tr(V^\la), \quad &\text{ on } B'_{r_0/\la},
\end{cases}
\end{equation}
 in the sense that 
\begin{equation}\label{eq_blow_up_solution}
\int_{ B_{r_0/\la}^+} y^{1-2s} \nabla V^\la\cdot\nabla W \, dz-\int_{B_{r_0/\la}'} y^{1-2s}\frac{\alpha}{|x|_k^2}V^\la W  \, dz
=c_{N,s}\la^{2s}\int_{ B_{r_0/\la}^+} g(\la \cdot)\Tr(V^\la)  \Tr(W) \, dx
\end{equation}
for any $W \in H^1_{0,S_{r_0/\la}^+}(B_{r_0/\la}^+,y^{1-2s})$ (see \eqref{def_H10Br}).
Furthermore by \eqref{def_H} and a change of variables 
\begin{equation}\label{eq_blow_up_solution=1_S_1}
\int_{\mathbb{S}^+} \theta_{N+1}^{1-2s} |V^\la(\theta)|^2 dS=1 \quad \text{ for any } \la \in (0,r_0].
\end{equation}
Since the frequency function $\mc{N}$ is bounded on $[0,r_0]$ (see \eqref{ineq_N_lower_estimate} and \eqref{ineq_N_upper_estimate}) we can prove the following proposition. 
\begin{proposition}\label{prop_Vla_bounded}
The family of functions $\{V^\la\}_{\la \in (0,r_0]}$ is bounded in $H^1(B_1^+,y^{1-2s})$. 
\end{proposition}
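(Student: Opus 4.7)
The plan is to exploit the scaling invariance of the Almgren frequency and combine the uniform bound from Proposition \ref{prop_N_upper} with the coercivity estimate of Proposition \ref{prop_nabla_D+H}.

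First I would introduce the rescaled height, energy and frequency associated with \eqref{prob_blow_up_solution},
\[
H^\la(r) := \frac{1}{r^{N+1-2s}}\int_{S_r^+} y^{1-2s}(V^\la)^2\, dS,
\]
\[
D^\la(r) := \frac{1}{r^{N-2s}}\!\left(\int_{B_r^+} y^{1-2s}\!\left(|\nabla V^\la|^2 - \tfrac{\alpha}{|x|_k^2}(V^\la)^2\right)dz - c_{N,s}\la^{2s}\!\int_{B_r'} g(\la x)(V^\la)^2\, dx\right),
\]
and $\mc{N}^\la := D^\la/H^\la$. A direct change of variables $\zeta = \la z$ in \eqref{def_H}--\eqref{def_D} yields $H^\la(r) = H(\la r)/H(\la)$, $D^\la(r) = D(\la r)/H(\la)$ and therefore $\mc{N}^\la(r) = \mc{N}(\la r)$. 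By \eqref{eq_blow_up_solution=1_S_1} one has $H^\la(1) = 1$, while Proposition \ref{prop_N_upper} gives $\mc{N}^\la(1) = \mc{N}(\la) \le C_1$; hence $D^\la(1) \le C_1$ as well.

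Second, I would verify that the coercivity of Proposition \ref{prop_nabla_D+H} applies to $V^\la$ on $B_1^+$ uniformly in $\la$. The rescaled boundary potential $g_\la(x) := \la^{2s} g(\la x)$ inherits from \eqref{hp_g} the bound
\[
|g_\la(x)| \le C_g \la^\e |x|^{-2s+\e} \le C_g r_0^\e |x|^{-2s+\e}, \quad x \in B_1',
\]
since $\la \in (0,r_0]$. Tracking the dependence on $C_h$ in the proof of Proposition \ref{prop_h_D+H} shows that the corresponding constant $k_{N,s,g_\la}$ scales like $\la^\e$ times a constant depending only on $N,s,C_g$, so the smallness condition \eqref{def_r0} ensures that the coercivity estimate holds for $V^\la$ at radius $1$ with a positive constant $C'$ independent of $\la$:
\[
D^\la(1) + \frac{N-2s}{2}H^\la(1) \ge C'\!\left(\int_{B_1^+} y^{1-2s}|\nabla V^\la|^2\, dz + \frac{N-2s}{2}H^\la(1)\right).
\]
Plugging in $H^\la(1) = 1$ and $D^\la(1) \le C_1$ yields a uniform upper bound on $\int_{B_1^+} y^{1-2s}|\nabla V^\la|^2\, dz$.

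Finally, to upgrade this to a full $H^1(B_1^+, y^{1-2s})$ bound I would control $\int_{B_1^+} y^{1-2s}(V^\la)^2\, dz$ by combining the gradient bound with the boundary normalization $\int_{S_1^+} y^{1-2s}(V^\la)^2\, dS = 1$ via a standard weighted Poincaré-type inequality (integrating $V^\la$ radially from $S_1^+$ and applying the Cauchy--Schwarz inequality in the radial variable, or equivalently invoking the trace theory recalled in Proposition \ref{prop_trace_Sr+}). The main obstacle I anticipate is the bookkeeping in the second step: one must verify that $k_{N,s,h}$ in Proposition \ref{prop_h_D+H} is monotone (essentially linear) in $C_h$, so that the smallness condition \eqref{def_r0} correctly transfers to $g_\la$ at unit scale uniformly in $\la$. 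Once this is pinned down, the argument is a direct scaling argument, and the boundedness is a rather immediate consequence of the monotonicity machinery developed in Section \ref{sec_monotonicity}.
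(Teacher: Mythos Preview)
Your proposal is correct and follows essentially the same strategy as the paper: combine the uniform frequency bound $\mc N(\la)\le C_1$ with the coercivity of Proposition~\ref{prop_nabla_D+H} and the normalization \eqref{eq_blow_up_solution=1_S_1}.

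The only noteworthy difference is the order of operations. You rescale first and then apply coercivity on $B_1^+$ with the potential $g_\la(x)=\la^{2s}g(\la x)$; this forces you to check that the constant $k_{N,s,g_\la}$ in Proposition~\ref{prop_h_D+H} satisfies $k_{N,s,g_\la}\le r_0^\e\,k_{N,s,g}$ for $\la\le r_0$ (which does follow once one observes that $k_{N,s,h}$ is linear in $C_h$). The paper instead applies \eqref{ineq_nabla_D+H} directly to $U$ on $B_\la^+$ with the original $g$, where the smallness condition \eqref{def_r0} holds by construction since $\la\le r_0$, and only then changes variables to $V^\la$; this sidesteps the bookkeeping you flagged entirely. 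For the $L^2$ part, the paper invokes \eqref{ineq_hardy_with_boundary} (which, since $|x|_k\le 1$ on $B_1^+$, immediately controls $\int_{B_1^+} y^{1-2s}(V^\la)^2\,dz$ by the gradient plus the boundary term), playing exactly the role of the Poincar\'e-type inequality you describe.
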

\begin{proof}
For any $\la \in (0,r_0)$, thanks to \eqref{ineq_nabla_D+H}, \eqref{def_blow_up_solution} and a change of variables,
\begin{multline}\label{dim_Vla_bounded:1}
\mc{N}(\la)=\frac{{\la^{2s-N}}}{H(\la)}\left(\int_{B_\la^+} y^{1-2s}\left(|\nabla U|^2-\frac{\alpha}{|x|^2_k}U^2\right) \, dz-c_{N,s}\int_{B_\la'}g|\Tr(U)|^2 \, dx\right)\\
\ge \left(1-\alpha\left(\frac{2}{k-2}\right)^2+  c_{N,s}k_{N,s,g}r_0^\e\right)\frac{{\la^{2s-N}}}{H(\la)}\left(\int_{B_{\la}^+} y^{1-2s} |\nabla U|^2 \, dz\right)-\frac{N-2s}{2}\\
=\left(1-\alpha\left(\frac{2}{k-2}\right)^2+  c_{N,s}k_{N,s,g}r_0^\e\right)\left(\int_{B_{1}^+} y^{1-2s} |\nabla V^\la|^2 \, dz\right)-\frac{N-2s}{2}.
\end{multline}
Hence the claim follows from \eqref{ineq_N_upper_estimate}, \eqref{eq_blow_up_solution=1_S_1} and \eqref{ineq_hardy_with_boundary}.
\end{proof}

Now we establish the following doubling property.
\begin{proposition}\label{prop_doubling}
There exists a constant $C_3>0$ such that 
\begin{align}
&\frac{1}{C_3}H(\la)\le H(R \la) \le C_3 H(\la),\label{ineq_doubling_H}\\
&\int_{B_R^+}y^{1-2s} |V^\la|^2 \, dz \le C_3 2^{N+2-2s}\int_{B_1^+}y^{1-2s} |V^{R\la}|^2 \, dz, \label{ineq_doubling_Vla}\\
&\int_{B_R^+}y^{1-2s} |\nabla V^\la|^2 \, dz \le C_3 2^{N-2s}\int_{B_1^+}y^{1-2s} |\nabla V^{R\la}| \, dz, \label{ineq_doubling_nabla}
\end{align}
for any $\la \in (0,r_0)$ and any $R \in [1,2]$.
\end{proposition}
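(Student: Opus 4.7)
The plan is to prove the three bounds in order, with the first being the heart of the argument and the other two following by change of variables. The key fact is that the frequency function $\mc{N}$ is two-sided bounded on $(0,r_0]$: by \eqref{ineq_N_lower_estimate} and \eqref{ineq_N_upper_estimate}, we have $-\frac{N-2s}{2}<\mc{N}(r)\le C_1$ uniformly in $r$.

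First I would prove the doubling property \eqref{ineq_doubling_H} for $H$. By Proposition \ref{prop_H'}, in a distributional sense
\begin{equation*}
\frac{d}{dr}\log H(r)=\frac{H'(r)}{H(r)}=\frac{2\mc{N}(r)}{r} \qquad \text{for a.e. } r\in(0,r_0).
\end{equation*}
Integrating from $\la$ to $R\la$ and using the two-sided bound on $\mc{N}$ gives
\begin{equation*}
-(N-2s)\log R \;\le\; \log\frac{H(R\la)}{H(\la)} \;\le\; 2C_1\log R,
\end{equation*}
so for $R\in[1,2]$ we obtain $2^{-(N-2s)}\le H(R\la)/H(\la)\le 2^{2C_1}$. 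Setting $C_3:=\max\{2^{N-2s},\,2^{2C_1}\}$ yields \eqref{ineq_doubling_H}.

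Next I would derive \eqref{ineq_doubling_Vla} and \eqref{ineq_doubling_nabla} by a direct change of variables in the definition \eqref{def_blow_up_solution} of $V^\la$. Writing $z=(x,y)$ and substituting $w=\la z$ (so that $dz=\la^{-(N+1)}\,dw$ and the last coordinate becomes $w_{N+1}=\la y$, giving $y^{1-2s}=\la^{2s-1}w_{N+1}^{1-2s}$), one obtains
\begin{equation*}
\int_{B_R^+}y^{1-2s}|V^\la|^2\,dz=\frac{\la^{2s-N-2}}{H(\la)}\int_{B_{R\la}^+}w_{N+1}^{1-2s}U(w)^2\,dw,
\end{equation*}
and the analogous identity with $\la$ replaced by $R\la$ and $R$ replaced by $1$. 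Taking the ratio eliminates the common integral of $U^2$ and produces
\begin{equation*}
\frac{\int_{B_R^+}y^{1-2s}|V^\la|^2\,dz}{\int_{B_1^+}y^{1-2s}|V^{R\la}|^2\,dz}=R^{N+2-2s}\,\frac{H(R\la)}{H(\la)}\le 2^{N+2-2s}C_3,
\end{equation*}
which is \eqref{ineq_doubling_Vla}. The same computation for $|\nabla V^\la|^2$, noting the extra factor $\la^2$ from the chain rule, yields the ratio $R^{N-2s}H(R\la)/H(\la)\le 2^{N-2s}C_3$, which is \eqref{ineq_doubling_nabla}.

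There is no real obstacle here; the only subtlety to be careful with is tracking the exponents in the change of variables since the weight $y^{1-2s}$ and the measure $dz$ scale with different powers of $\la$, and $y$ itself (being a spatial coordinate) rescales along with $x$. Once that bookkeeping is done, the three bounds collapse to the single estimate on $H(R\la)/H(\la)$ proved in the first step.
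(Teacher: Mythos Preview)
Your proof is correct and follows essentially the same approach as the paper: integrate $H'/H=2\mc{N}/r$ using the two-sided bounds \eqref{ineq_N_lower_estimate}--\eqref{ineq_N_upper_estimate} to get \eqref{ineq_doubling_H}, then deduce \eqref{ineq_doubling_Vla} and \eqref{ineq_doubling_nabla} by the change of variables $w=\la z$. The only cosmetic difference is that the paper writes the change of variables as a chain of inequalities rather than a ratio, but the content is identical.
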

\begin{proof}
By \eqref{eq_H'}, \eqref{ineq_N_lower_estimate}, and \eqref{ineq_N_upper_estimate} 
\begin{equation}\label{dim_prop_doubling:1}
-\frac{N-2s}{r} \le \frac{H'(r)}{H(r)}=\frac{2\mc{N}(r)}{r} \le \frac{2C_1}{r}\quad \text{ for a.e. } r \in (0,r_0).
\end{equation}
An integration over $(\la,R \la )$ with  $R \in (1,2]$ yields
\begin{equation}\label{dim_prop_doubling:2}
R^{2s-N}\le \frac{H(R\la)}{H(\la)} \le R^{2C_1}
\end{equation}
thus \eqref{ineq_doubling_H} holds for $R \in (1,2]$ while if $R=1$ it is obvious. 

Furthermore for any $\la \in (0,r_0)$, by \eqref{ineq_doubling_H} and a change of variables,
\begin{multline}\label{dim_prop_doubling:3}
\int_{B_R^+}y^{1-2s} |V^\la|^2 \, dz=\frac{\la^{-N-2+2s}}{H(\la)}\int_{B_{R\la}^+} y^{1-2s}|U|^2 \, dz\le C_3\frac{\la^{-N-2+2s}}{H(\la R)}\int_{B_{R\la}^+} y^{1-2s}|U|^2 \, dz\\
= C_3R^{N+2-2s}\int_{B_1^+} y^{1-2s}|V^{\la R}|^2 \, dz \le C_32^{N+2-2s}\int_{B_1^+} y^{1-2s}|V^{\la R}|^2 \, dz,
\end{multline}
for any  $R \in [1,2]$. Hence we have proved \eqref{ineq_doubling_Vla} and \eqref{ineq_doubling_nabla} follows from \eqref{ineq_doubling_H} in the same way.
\end{proof}

In view of  the Coarea Formula, there exists a subset  $\mc{M}\subset (0,r_0)$ of Lebesgue measure $0$ such that $|\nabla U|\in L^2(S_r^+,y^{1-2s})$ and \eqref{eq_extension_with_boundary} holds for any $r \in  (0,r_0)\setminus \mc{M}$.

\begin{proposition}\label{prop_ineq_boundary_nabla_Vla}
There exist  $M>0$ and $\la_0>0$ such that for any $\la \in (0,\la_0)$ there exists  $R_\la \in [1,2]$ such that $R_\la\la \not \in \mc{M}$ and 
\begin{equation}\label{ineq_boundary_nabla_Vla}
\int_{S^+_{R_\la}}y^{1-2s} |\nabla V^\la|^2 \, dS \le M \int_{B^+_{R_\la}}y^{1-2s}(|\nabla V^\la|^2+|V^\la|^2) \, dz.
\end{equation} 
\end{proposition}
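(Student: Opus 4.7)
The proposition should follow from an averaging argument combined with the doubling bound \eqref{ineq_doubling_nabla} and a trace inequality. My plan is to set
$$f_\la(R) := \int_{S_R^+} y^{1-2s}|\nabla V^\la|^2 \, dS,$$
which, thanks to Remark \ref{remark_int_nabla_with_coarea}, belongs to $L^1(1,2)$ with
$$\int_1^2 f_\la(R)\,dR = \int_{B_2^+\setminus B_1^+} y^{1-2s}|\nabla V^\la|^2\,dz \le \int_{B_2^+} y^{1-2s}|\nabla V^\la|^2\,dz.$$
I would then bound the right-hand side uniformly in $\la$: applying \eqref{ineq_doubling_nabla} with $R=2$ gives
$$\int_{B_2^+} y^{1-2s}|\nabla V^\la|^2\,dz \le C_3\, 2^{N-2s}\int_{B_1^+} y^{1-2s}|\nabla V^{2\la}|^2\,dz,$$
and Proposition \ref{prop_Vla_bounded}, applied to $V^{2\la}$ (which forces the restriction $\la_0 \le r_0/2$), renders this at most a constant $C_2$ independent of $\la$.

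To pass from an average bound to a pointwise inequality of the form \eqref{ineq_boundary_nabla_Vla} I also need a uniform lower bound on the bulk $H^1(B_1^+, y^{1-2s})$-norm of $V^\la$. Here I would combine the normalisation \eqref{eq_blow_up_solution=1_S_1} with the continuous trace operator from Proposition \ref{prop_trace_Sr+}: since $\theta_{N+1}=y$ on $S_1^+$,
$$1 = \int_{S_1^+} y^{1-2s}|V^\la|^2\,dS \le K \int_{B_1^+} y^{1-2s}\bigl(|V^\la|^2+|\nabla V^\la|^2\bigr)\,dz$$
for some constant $K>0$ depending only on $N$ and $s$.

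Finally, by Markov's inequality the set $\bigl\{R\in(1,2): f_\la(R)>4C_2\bigr\}$ has Lebesgue measure at most $\tfrac14$, so its complement in $[1,2]$ has measure at least $\tfrac34$. Since $\{R\in[1,2]:R\la\in\mc{M}\}$ is a null set, I can pick $R_\la$ outside it with $f_\la(R_\la)\le 4C_2$. Combining with the lower bound above and the trivial inclusion $B_1^+\subset B_{R_\la}^+$ yields
$$f_\la(R_\la)\le 4C_2 K \int_{B_{R_\la}^+} y^{1-2s}\bigl(|V^\la|^2+|\nabla V^\la|^2\bigr)\,dz,$$
which is \eqref{ineq_boundary_nabla_Vla} with $M := 4C_2 K$. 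I do not foresee any serious obstacle; the only mildly subtle point is to ensure that $R_\la$ can be selected outside the exceptional null set, which is automatic since the Markov selection set has positive Lebesgue measure.
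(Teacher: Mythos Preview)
Your argument is correct and actually more direct than the paper's. The paper argues by contradiction: it sets $f_\la(R):=\int_{B_R^+}y^{1-2s}(|\nabla V^\la|^2+|V^\la|^2)\,dz$, assumes the conclusion fails for every $M$, obtains the differential inequality $f'_{\la_n}(R)>M f_{\la_n}(R)$ on $[1,2]$ for a suitable sequence $\la_n\to 0^+$, integrates to get $f_{\la_n}(2)>e^M f_{\la_n}(1)$, and combines this with the uniform bound on $f_\la(2)$ to force $\liminf_{\la\to 0^+} f_\la(1)=0$; the compactness of the trace then contradicts the normalisation \eqref{eq_blow_up_solution=1_S_1}. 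Your route bypasses the contradiction entirely: you bound $\int_1^2\int_{S_R^+}y^{1-2s}|\nabla V^\la|^2\,dS\,dR$ uniformly via the doubling estimate \eqref{ineq_doubling_nabla} and Proposition~\ref{prop_Vla_bounded}, then use Markov's inequality to select a good radius, and finally convert the absolute bound into the stated relative bound through the trace lower bound $1\le K\|V^\la\|^2_{H^1(B_1^+,y^{1-2s})}$. This is shorter, constructive, and yields an explicit constant $M=4C_2K$; the paper's Gronwall-type argument, on the other hand, avoids invoking Proposition~\ref{prop_doubling} and works directly with the (easy extension of) Proposition~\ref{prop_Vla_bounded} on $B_2^+$. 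Both approaches ultimately rest on the same two ingredients: uniform $H^1$-boundedness of the blow-ups and the normalisation on $\mb{S}^+$.
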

\begin{proof}
By Proposition \ref{prop_Vla_bounded} $\{V^\la\}_{\la \in \left(0,\frac{r_0}{2}\right)}$ is bounded in $H^1(B_2^+,y^{1-2s})$. Hence 
\begin{equation}\label{dim_ineq_boundary_nabla_Vla:1}
\limsup_{\la \to 0^+} \int_{B_2^+} y^{1-2s} (|\nabla V^\la|^2 +|V^\la|^2) \, dz < +\infty.
\end{equation}
Let, for any $\la \in \left(0,\frac{r_0}{2}\right)$,
\begin{equation}\label{dim_ineq_boundary_nabla_Vla:2}
f_\la(R):=\int_{B_R^+} y^{1-2s} (|\nabla V^\la|^2 +|V^\la|^2) \, dz.
\end{equation}
The function $f$ is absolutely continuous on $[1,2]$ and, thanks to the Coarea Formula, its distributional derivative is given by
\begin{equation}\label{dim_ineq_boundary_nabla_Vla:3}
f'_\la(R)=\int_{S_R^+} y^{1-2s} (|\nabla V^\la|^2 +|V^\la|^2) \, dS \quad \text{ for a.e. } R \in [1,2].
\end{equation}
We argue by contradiction supposing that for any $M>0$ there exists  $\la_n \to 0^+$ such that 
\begin{equation}\label{dim_ineq_boundary_nabla_Vla:4}
\int_{S_R^+} y^{1-2s}(|\nabla V^{\la_n}|^2 +|V^{\la_n}|^2)\, dS > M \int_{B_R^+}y^{1-2s}(|\nabla V^{\la_n}|^2 +|V^{\la_n}|^2) \, dz
\end{equation}
for any $n \in \mathbb{N}$ and any $R \in [1,2] \setminus {\frac{1}{\la_n}}\mc{M}$, hence for a.e. $R\in [1,2]$. 
Therefore 
\begin{equation}\label{dim_ineq_boundary_nabla_Vla:5}
f'_{\la_n}(R) > M f_{\la_n}(R) \quad \text{ for a.e. } R\in [1,2] \text{ and any } n \in \mathbb{N}.
\end{equation}
An integration over $[1,2]$ yields $f_{\la_n}(2)>e^Mf_{\la_n}(1)$ for any $n \in \mathbb{N}$. Hence 
\begin{equation}\label{dim_ineq_boundary_nabla_Vla:6}
\liminf_{n \to \infty} f_{\la_n}(1)\le \limsup_{n \to \infty} f_{\la_n}(1)\le e^{-M}\limsup_{n \to \infty} f_{\la_n}(2)
\end{equation}
and so 
\begin{equation}\label{dim_ineq_boundary_nabla_Vla:7}
\liminf_{\la \to 0^+} f_{\la}(1)\le e^{-M}\limsup_{\la \to 0^+} f_{\la}(2)
\end{equation}
for any $M>0$. It follows that $\liminf_{\la \to 0^+} f_{\la}(1)=0$ by \eqref{dim_ineq_boundary_nabla_Vla:1}. We conclude that there exists a sequence  $\la_n \to 0^+$ as $n\to \infty$ and $V \in H^1(B_1^+,y^{1-2s})$ such that 
\begin{equation}\label{dim_ineq_boundary_nabla_Vla:8}
\lim_{n \to \infty} \int_{B_1^+}y^{1-2s} (|\nabla V^{\la_n}|^2 +|V^{\la_n}|^2)\, dz=0
\end{equation}
and  $V_{\la_n} \rightharpoonup V$ weakly in $ H^1(B_1^+,y^{1-2s})$, taking into account Proposition \ref{prop_Vla_bounded}.
By Proposition \ref{prop_trace_Sr+}, \eqref{eq_blow_up_solution=1_S_1} and the lower semicontinuity of norms, we obtain  
\begin{equation}\label{dim_ineq_boundary_nabla_Vla:9}
\int_{B_1^+}y^{1-2s}(|\nabla V|^2 +|V|^2)\, dz=0 \quad \text{ and  } \quad \int_{\mathbb{S}^+}\theta_{N+1}^{1-2s}  V^2 \, dS=1
\end{equation}
which is a contradiction.
\end{proof}

\begin{proposition}\label{prop_Vla_nabla_boundary_bounded}
Let $R_\la$ be as in Proposition \ref{prop_ineq_boundary_nabla_Vla}. Then there exists a constant $\overline{M}>0$ such that 
\begin{equation}\label{ineq_Vla_nabla_boundary_bounded}
\int_{\mathbb{S}^+}\theta_{N+1}^{1-2s} |\nabla V^{R_\la \la}|^2 dS \le \overline{M} \quad \text{ for any } \la \in \left(0,\min\left\{\la_0,\frac{r_0}{2}\right\}\right).
\end{equation}
\end{proposition}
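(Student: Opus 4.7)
The plan is to reduce the bound on $\int_{\mb{S}^+}\theta_{N+1}^{1-2s}|\nabla V^{R_\la\la}|^2\,dS$ to the boundary estimate supplied by Proposition~\ref{prop_ineq_boundary_nabla_Vla}, and then invoke Proposition~\ref{prop_Vla_bounded} together with the doubling estimates of Proposition~\ref{prop_doubling} to obtain uniform control as $\la \to 0^+$.

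First I would exploit a simple scaling identity linking $V^{R_\la\la}$ and $V^\la$: from the definition \eqref{def_blow_up_solution} one has
\begin{equation*}
V^{R_\la\la}(z) = \sqrt{\frac{H(\la)}{H(R_\la\la)}}\, V^\la(R_\la z),
\end{equation*}
whence $\nabla V^{R_\la\la}(z) = R_\la \sqrt{H(\la)/H(R_\la\la)}\, \nabla V^\la(R_\la z)$. Writing the integral over $\mb{S}^+$ as an integral over $S_1^+$ with weight $y^{1-2s}$, and then making the change of variables $w = R_\la z \in S_{R_\la}^+$ (so that $y_z^{1-2s}\,dS_z = R_\la^{2s-1-N} y_w^{1-2s}\,dS_w$), a direct computation yields
\begin{equation*}
\int_{\mb{S}^+}\theta_{N+1}^{1-2s}|\nabla V^{R_\la\la}|^2\,dS = R_\la^{1+2s-N}\,\frac{H(\la)}{H(R_\la\la)}\int_{S_{R_\la}^+} y^{1-2s}|\nabla V^\la|^2\,dS.
\end{equation*}
Since $R_\la \in [1,2]$, the factor $R_\la^{1+2s-N}$ is bounded by a dimensional constant, and the doubling estimate \eqref{ineq_doubling_H} yields $H(\la)/H(R_\la\la) \le C_3$, so the prefactor is controlled independently of $\la$.

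Next, since $R_\la\la \notin \mc{M}$ by the choice in Proposition~\ref{prop_ineq_boundary_nabla_Vla}, one has $|\nabla V^\la| \in L^2(S_{R_\la}^+,y^{1-2s})$, and the estimate \eqref{ineq_boundary_nabla_Vla} gives
\begin{equation*}
\int_{S_{R_\la}^+} y^{1-2s}|\nabla V^\la|^2\,dS \le M\int_{B_{R_\la}^+} y^{1-2s}(|\nabla V^\la|^2+|V^\la|^2)\,dz \le M\int_{B_2^+} y^{1-2s}(|\nabla V^\la|^2+|V^\la|^2)\,dz.
\end{equation*}
It remains to bound the right-hand side uniformly in $\la$. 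For $\la < r_0/2$, the doubling inequalities \eqref{ineq_doubling_Vla}--\eqref{ineq_doubling_nabla} applied with $R=2$ reduce the integrals of $V^\la$ and $\nabla V^\la$ over $B_2^+$ to the integrals of $V^{2\la}$ and $\nabla V^{2\la}$ over $B_1^+$, which are uniformly bounded by Proposition~\ref{prop_Vla_bounded}. Combining these estimates produces the desired constant $\overline{M}$.

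All the steps are routine; the only care needed lies in correctly tracking the scaling factors from the change of variables and in using the non-null-set condition $R_\la\la \notin \mc{M}$ to legitimately apply the boundary bound \eqref{ineq_boundary_nabla_Vla}. No substantial obstacle is expected.
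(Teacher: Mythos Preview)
Your proposal is correct and follows essentially the same approach as the paper: the same scaling identity, the same use of the doubling estimate \eqref{ineq_doubling_H} and the boundary bound \eqref{ineq_boundary_nabla_Vla}, and then Proposition~\ref{prop_Vla_bounded}. The only cosmetic difference is that the paper applies \eqref{ineq_doubling_Vla}--\eqref{ineq_doubling_nabla} directly with $R=R_\la$ to pass from $B_{R_\la}^+$ to $B_1^+$ with $V^{R_\la\la}$, whereas you first enlarge $B_{R_\la}^+$ to $B_2^+$ and then apply doubling with $R=2$ to land on $V^{2\la}$; both routes give the uniform bound.
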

\begin{proof}
By a change of variables, the fact that $R_\la \in [1,2]$ and \eqref{def_blow_up_solution} 
\begin{multline}\label{dim_Vla_nabla_boundary_bounded:1}
\int_{\mathbb{S}^+}\theta_{N+1}^{1-2s} |\nabla V^{R_\la\la}|^2 dS=R_\la^{-N+1+2s}\frac{H(\la)}{H(R_\la\la)}\int_{S_{R_\la}^+}y^{1-2s}|\nabla V^{\la}|^2 dS\\
\le2 C_3 M \int_{B_{R_\la}^+}y^{1-2s}(|\nabla V^{\la}|^2+|V^{\la}|^2)\, dz \\
\le  2^{N+3-2s}C_3^2 M\int_{B_1^+}y^{1-2s}(|\nabla V^{R_\la\la}|^2+|V^{R_\la\la}|^2)\, dz \le \overline{M} <+\infty,
\end{multline}
for some $\overline M>0$, in view of Proposition \ref{prop_Vla_bounded}, \eqref{ineq_doubling_H},  \eqref{ineq_doubling_Vla}, \eqref{ineq_doubling_nabla}, and  \eqref{ineq_boundary_nabla_Vla}.
\end{proof}

\begin{proposition}\label{prop_blow_up}
Let $U$ be a non-trivial solution of \eqref{eq_extension_with_equation} and $\gamma$ be as in \eqref{limit_N}. Then
\begin{itemize}
\item[(i)] there exists $n \in \mathbb{N}\setminus\{0\}$  such that 
\begin{equation}\label{eq_gamma_eigenvalue}
\gamma=-\frac{N-2s}{2}+\sqrt{\left(\frac{N-2s}{2}\right)^2+\gamma_{\alpha,k,n}},
\end{equation} 
where $\gamma_{\alpha,k,n}$ is an eigenvalue of problem \eqref{prob_eigenvalue_sphere},
\item[(ii)] for any sequence $\la_p \to 0^+$ as $p \to \infty$ there exists a subsequence $\la_{p_q} \to 0^+$ as $q \to \infty$ and a eigenfunction $Z$ of problem \eqref{prob_eigenvalue_sphere}, corresponding to  the eigenvalue $\gamma_{\alpha,k,n}$,  such that  $\norm{Z}_{L^2(\mb{S}^+,\theta_{N+1}^{1-2s})}=1$ and  
\begin{equation}\label{limit_blow_up_subsequence}
\frac{U(\la_{p_q}z )}{\sqrt{H(\la_{p_q})}} \to |z|^{\gamma} Z\left(\frac{z}{|z|}\right) \quad \text{ strongly in }  H^1(B_1^+,y^{1-2s})\quad \text{ as } q \to \infty.
\end{equation}
\end{itemize}
\end{proposition}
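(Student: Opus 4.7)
The plan is to execute the standard blow-up argument adapted to the frequency function $\mathcal{N}$ developed in Section \ref{sec_monotonicity}. First, by Proposition \ref{prop_Vla_bounded}, the family $\{V^\la\}$ is bounded in $H^1(B_1^+,y^{1-2s})$, so given $\la_p\to 0^+$ there is a subsequence (still denoted $\la_{p_q}$) and $V\in H^1(B_1^+,y^{1-2s})$ with $V^{\la_{p_q}}\rightharpoonup V$ weakly in $H^1(B_1^+,y^{1-2s})$; by the compact trace of Proposition \ref{prop_trace_Sr+} we also have $V^{\la_{p_q}}\to V$ in $L^2(\mathbb{S}^+,\theta_{N+1}^{1-2s})$, so $\int_{\mathbb{S}^+}\theta_{N+1}^{1-2s}V^2\,dS=1$ by \eqref{eq_blow_up_solution=1_S_1}, whence $V\not\equiv 0$. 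Passing to the limit in \eqref{eq_blow_up_solution} I use the scaling $\la^{2s}|g(\la x)|\le C_g\la^\e|x|^{-2s+\e}\to 0$ to kill the boundary term, and together with \eqref{ineq_hardy_extened} and dominated convergence I conclude that $V$ weakly solves $-\dive(y^{1-2s}\nabla V)=y^{1-2s}\alpha|x|_k^{-2}V$ on $B_1^+$ with zero Neumann condition on $B_1'$.

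Next I would show that $V$ is homogeneous of degree $\gamma$. Let $H^{V^\la}$, $D^{V^\la}$, $\mathcal{N}^{V^\la}$ denote the height, energy and frequency functions associated with $V^\la$ and the scaled potential $\la^{2s}g(\la\cdot)$; a change of variable gives $\mathcal{N}^{V^\la}(r)=\mathcal{N}(r\la)$ for every $r\in(0,r_0/\la]$. Taking $\la=\la_{p_q}\to 0^+$ and invoking \eqref{limit_N} yields $\mathcal{N}^{V^{\la_{p_q}}}(r)\to\gamma$ for every $r\in(0,1]$. The strong $H^1(B_r^+,y^{1-2s})$ convergence that I derive in the last step (combined with Propositions \ref{prop_ineq_boundary_nabla_Vla}--\ref{prop_Vla_nabla_boundary_bounded} to control boundary integrals) then shows $\mathcal{N}^V(r)=\gamma$ for a.e. $r\in(0,1]$. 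Since $\mathcal{N}^V$ is nondecreasing (the potential of $V$ is the regular Hardy potential, so the analogue of $v_2$ vanishes and only the nonnegative $v_1$ remains), the constancy forces $v_1\equiv 0$. Equality in the Cauchy--Schwarz inequality between $V$ and $\partial_\nu V$ in $L^2(S_r^+,y^{1-2s})$ thus gives separation of variables: $V(r\theta)=r^\gamma Z(\theta)$ for some $Z\in H^1(\mathbb{S}^+,\theta_{N+1}^{1-2s})$ with $\|Z\|_{L^2(\mathbb{S}^+,\theta_{N+1}^{1-2s})}=1$.

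Substituting this ansatz into the PDE for $V$ and separating radial and angular parts shows that $Z$ satisfies the variational equation \eqref{eq_egienvalue_sphere} with eigenvalue $\gamma(\gamma+N-2s)$. Hence $\gamma(\gamma+N-2s)=\gamma_{\alpha,k,n}$ for some $n\in\mb{N}\setminus\{0\}$, and the lower bound $\gamma>-\frac{N-2s}{2}$ from \eqref{ineq_N_lower_estimate} selects the positive root, yielding \eqref{eq_gamma_eigenvalue}.

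To close the proof I need the strong $H^1(B_1^+,y^{1-2s})$ convergence \eqref{limit_blow_up_subsequence}. Testing \eqref{eq_blow_up_solution} (with $\la=\la_{p_q}$) against $V^{\la_{p_q}}$ over $B_1^+$ via the analogue of \eqref{eq_extension_with_boundary} for the scaled problem, and using the uniform bound on $\int_{S_1^+}y^{1-2s}|\nabla V^{\la_{p_q}}|^2\,dS$ coming from Propositions \ref{prop_ineq_boundary_nabla_Vla} and \ref{prop_Vla_nabla_boundary_bounded} together with a choice of $R_{\la_{p_q}}\in[1,2]$, I can pass to the limit and compare with the identity obtained by testing the limit equation against $V$; this shows $\int_{B_1^+}y^{1-2s}|\nabla V^{\la_{p_q}}|^2\,dz\to\int_{B_1^+}y^{1-2s}|\nabla V|^2\,dz$, and combined with weak convergence this upgrades to strong convergence. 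The main obstacle in the whole scheme is precisely this last step: propagating the convergence of the frequencies at fixed radii into convergence of energies on balls, which is why the doubling estimates of Proposition \ref{prop_doubling} and the clever selection of radii $R_\la\in[1,2]$ avoiding the exceptional set $\mathcal{M}$ are indispensable.
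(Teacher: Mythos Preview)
Your outline captures the architecture of the argument, but the strong-convergence step contains a genuine gap. You assert a ``uniform bound on $\int_{S_1^+}y^{1-2s}|\nabla V^{\la_{p_q}}|^2\,dS$ coming from Propositions \ref{prop_ineq_boundary_nabla_Vla} and \ref{prop_Vla_nabla_boundary_bounded}''. Those propositions do \emph{not} give that. Proposition \ref{prop_ineq_boundary_nabla_Vla} produces, for each $\la$, a radius $R_\la\in[1,2]$ with $R_\la\la\notin\mc{M}$ at which $\int_{S_{R_\la}^+}y^{1-2s}|\nabla V^\la|^2\,dS$ is controlled, and Proposition \ref{prop_Vla_nabla_boundary_bounded} converts this into a bound for $\int_{\mathbb{S}^+}\theta_{N+1}^{1-2s}|\nabla V^{R_\la\la}|^2\,dS$. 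Neither yields control of $|\nabla V^{\la_{p_q}}|$ on the fixed sphere $S_1^+$; moreover, the boundary identity \eqref{eq_extension_with_boundary} is only available for radii outside the null set $\mc{M}$, and nothing ensures $\la_{p_q}\notin\mc{M}$. Consequently you cannot test at $r=1$ for $V^{\la_{p_q}}$ and extract a weak limit of the normal derivative there, so your energy-comparison argument does not go through as written. This is not a cosmetic issue: without strong convergence you also cannot pass to the limit in $D_q(r)$ to obtain $\mc{N}_V(r)=\gamma$, so the homogeneity step collapses as well.

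The paper resolves this by carrying out the entire analysis first for the auxiliary sequence $V^{R_{\la_{p_q}}\la_{p_q}}$: since $R_{\la_{p_q}}\la_{p_q}\notin\mc{M}$ and $\partial_\nu V^{R_{\la_{p_q}}\la_{p_q}}$ is bounded in $L^2(\mathbb{S}^+,\theta_{N+1}^{1-2s})$ by Proposition \ref{prop_Vla_nabla_boundary_bounded}, one can test \eqref{eq_extension_with_boundary} at $r=1$, pass to the limit using weak convergence of the normal derivative paired with strong $L^2(\mathbb{S}^+)$ convergence of the trace, and deduce strong $H^1$ convergence and then homogeneity of the limit $V$. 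Only \emph{after} $V$ is known to be homogeneous of degree $\gamma$ does the paper return to the original sequence $V^{\la_{p_q}}$: by the doubling estimate \eqref{ineq_doubling_H} one extracts $R_{\la_{p_q}}\to\tilde R$ and $H(R_{\la_{p_q}}\la_{p_q})/H(\la_{p_q})\to\ell$, and dominated convergence plus a change of variables shows $V^{\la_{p_q}}\to\sqrt{\ell}\,V(\cdot/\tilde R)=\sqrt{\ell}\,\tilde R^{-\gamma}V$ strongly; the normalisation \eqref{eq_blow_up_solution=1_S_1} forces $\sqrt{\ell}\,\tilde R^{-\gamma}=1$. This two-step scheme (analyse $V^{R_\la\la}$, then transfer) is precisely what your proposal is missing.
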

\begin{proof}
Let $V^\la$ be as in \eqref{def_blow_up_solution} and $R_\la$ as in Proposition \ref{prop_ineq_boundary_nabla_Vla}. The family $\{V^{R_\la\la}\}_{\la \in 
\left(0,\min\left\{\la_0,\frac{r_0}{2}\right\}\right)}$ is bounded in $H^1(B_1^+,y^{1-2s})$, thanks to Proposition \ref{prop_Vla_bounded}. Let $\la_p \to 0^+$ as $p\to \infty$. Then there exists a 
subsequence  $\la_{p_q} \to 0^+$ as $q\to \infty$ and $V \in H^1(B_1^+,y^{1-2s})$ such that $V^{R_{\la_{p_q}}\la_{p_q}} \rightharpoonup V$ weakly in $H^1(B_1^+,y^{1-2s})$ as $q \to \infty$. 
By Proposition \ref{prop_trace_Sr+} the trace operator $\mathop{{\rm{Tr}}_{S^{+}_1}}$ is compact and so  
\begin{equation}\label{dim_blow_up:1}
\int_{\mb{S}^+} \theta_{N+1}^{1-2s} |V|^2 \, dS=1,
\end{equation}
in view of \eqref{eq_blow_up_solution=1_S_1}. Hence $V$ is non-trivial.
We claim that 
\begin{equation}\label{dim_blow_up:2}
V^{R_{\la_{p_q}}\la_{p_q}} \rightharpoonup V \quad \text{ strongly in }   H^1(B_1^+,y^{1-2s}) \text{ as } q \to \infty.
\end{equation}
For $q$ sufficiently large   $B_1^+\subseteq B^+_{r_0/(R_{\la_{p_q}}\la_{p_q})}$ and since   $R_{\la_{p_q}}\la_{p_q} \not \in \mc{M}$, where $\mc{M}$ is as in Proposition \
\ref{prop_ineq_boundary_nabla_Vla}, we have that 
\begin{multline}\label{dim_blow_up:3}
\int_{B_1^+} y^{1-2s} \left(\nabla V^{R_{\la_{p_q}}\la_{p_q}}\cdot\nabla W -\frac{\alpha}{|x|_k^2}V^{R_{\la_{p_q}}\la_{p_q}}W \right)\, dz\\
=\int_{\mb{S}^+} \theta_{N+1}^{1-2s} \pd{V^{R_{\la_{p_q}}\la_{p_q}}}{\nu}  \,W \, dS+c_{N,s}(R_{\la_{p_q}}\la_{p_q})^{2s}\int_{B_1'} g(R_{\la_{p_q}}\la_{p_q}\cdot) \Tr(V^{R_{\la_{p_q}}\la_{p_q}})\Tr(W) \, dx
\end{multline}
for any $W \in H^1(B_1^+,y^{1-2s})$, thanks to  \eqref{eq_extension_with_boundary} and a change of variables.
We will pass to the limit as $q \to \infty$ in $\eqref{dim_blow_up:3}$. To this end we observe that, for any $W \in  H^1(B_1^+,y^{1-2s})$,
\begin{multline}\label{dim_blow_up:4}
\left|\la^{2s}\int_{B_1'} g(\la\cdot) \Tr(V^\la)\Tr(W) \, dx\right|=\left|\frac{\la^{2s-N}}{H(\la)}\int_{B_\la'} g(x) \Tr(U)(x)\Tr(W)(\la x) \, dx\right| \\
\le k_{N,s,g}\frac{\la^{2s+\e-N}}{H(\la)} \left|\int_{B_\la^+}y^{1-2s}|\nabla U|^2\, dz-\int_{B_\la^+}y^{1-2s}\frac{\alpha}{|x|_k^2}|U|^2\, dz
+\frac{N-2s}{2\la}\int_{S^+_\la}y^{1-2s}|U|^2\, dS\right|^\frac{1}{2}\\
\times \left| \int_{B_\la^+}y^{1-2s}|\nabla W(\la \cdot)|^2\, dz-\int_{B_\la^+}y^{1-2s}\frac{\alpha}{|x|_k^2}W(\la \cdot)^2\, dz+\frac{N-2s}{2\la}
\int_{S^+_\la}y^{1-2s}|W(\la\cdot)|^2\, dS \right|^{\frac {1}{2}}\\
=k_{N,s,g} \la^\e \left|\int_{B_1^+}y^{1-2s}|\nabla V^\la|^2\, dz-\int_{B_1^+}y^{1-2s}\frac{\alpha}{|x|_k^2}|V^\la|^2\, dz+\frac{N-2s}{2}\right|^\frac{1}{2}\\
\times \left|\int_{B_1^+}y^{1-2s}|\nabla W|^2\, dz-\int_{B_1^+}y^{1-2s}\frac{\alpha}{|x|_k^2}W^2\, dz+\frac{N-2s}{2}\int_{\mb{S}^+}\theta_{N+1}^{1-2s}|W|^2\, dS \right|^{\frac {1}{2}}
\end{multline}
by a change of variables, the H\"older inequality,  \eqref{hp_g}, \eqref{ineq_h_D+H}, \eqref{def_blow_up_solution} and \eqref{eq_blow_up_solution=1_S_1}.
We conclude that 
\begin{equation}\label{dim_blow_up:5}
\lim_{\la \to 0^+}\left|\la^{2s}\int_{B_1'} g(\la\cdot) \Tr(V^\la)\Tr(W) \, dx\right|=0,
\end{equation}
by Proposition \ref{prop_Vla_bounded} and \eqref{ineq_hardy_with_boundary}.
Thanks to \eqref{ineq_Vla_nabla_boundary_bounded}, there  exists a function $f \in L^2(\mb{S}^+,\theta_{N+1}^{1-2s})$ such that 
\begin{equation}\label{dim_blow_up:6}
\pd{V^{R_{\la_{p_q}}\la_{p_q}}}{\nu} \rightharpoonup f \quad \text{ weakly in }  L^2(\mb{S}^+,\theta_{N+1}^{1-2s}) \text{ as } q \to \infty,
\end{equation}
up to a subsequence. Hence 
\begin{equation}\label{dim_blow_up:7}
\lim_{q \to \infty}\int_{\mb{S}^+}\theta_{N+1}^{1-2s} \pd{V^{R_{\la_{p_q}}\la_{p_q}}}{\nu}  \, W \, dS=\int_{\mb{S}^+} \theta_{N+1}^{1-2s} f W \, dS
\end{equation}
for any $W \in  H^1(B_1^+,y^{1-2s})$.
Furthermore 
\begin{equation}\label{dim_blow_up:8}
\lim_{q \to \infty}\int_{B_1^+} y^{1-2s} \left(\nabla V^{R_{\la_{p_q}}\la_{p_q}}\cdot\nabla W -\frac{\alpha}{|x|_k^2}V^{R_{\la_{p_q}}\la_{p_q}}W \right)\, dz=\int_{B_1^+} y^{1-2s} \left(\nabla V\cdot\nabla W -\frac{\alpha}{|x|_k^2}VW \right)\, dz
\end{equation}
by  Remark \ref{remark_equivalence_norm_H10Sr+}. It follows that
\begin{equation}\label{dim_blow_up:9}
\int_{B_1^+} y^{1-2s} \left(\nabla V\cdot\nabla W -\frac{\alpha}{|x|_k^2}VW \right)\, dz=\int_{\mb{S}^+} \theta_{N+1}^{1-2s} f W \, dS
\end{equation}
for any $W \in  H^1(B_1^+,y^{1-2s})$, that is $V$ is a weak solution of the problem 
\begin{equation}\label{dim_blow_up:9.1}
\begin{cases}
\dive(y^{1-2s}\nabla V)=\frac{\alpha}{|x|_k^2}V, &\text{ in } B_1^+,\\
-\lim_{y \to 0^+} y^{1-2s}\pd{V}{y}=0, &\text{ on } B_1'.
\end{cases}
\end{equation}
Furthermore testing \eqref{dim_blow_up:3} with $ V^{R_{\la_{p_q}}\la_{p_q}}$,
\begin{multline}\label{dim_blow_up:10}
\lim_{q \to \infty}\int_{B_1^+}y^{1-2s}\left(\left|\nabla V^{R_{\la_{p_q}}\la_{p_q}}\right|^2-\frac{\alpha}{|x|_k^2}\left|V^{R_{\la_{p_q}}\la_{p_q}}\right|^2 \right)\, dz\\
=\lim_{q \to \infty}\int_{\mb{S}^+}\theta_{N+1}^{1-2s}\pd{V^{R_{\la_{p_q}}\la_{p_q}}}{\nu}V^{R_{\la_{p_q}}\la_{p_q}}\,dS=\int_{\mb{S}^+} \theta_{N+1}^{1-2s} f W \, dS,
\end{multline}
thanks to \eqref{dim_blow_up:6} and the compactness of the trace operator $\mathop{{\rm{Tr}}_{S^+_1}}$, see Proposition \ref{prop_trace_Sr+}. 
Hence from Remark \ref{remark_equivalence_norm_H10Sr+} and \eqref{eq_blow_up_solution=1_S_1} we deduce \eqref{dim_blow_up:2}.
Let for any $ r\in (0,1]$
\begin{multline}\label{dim_blow_up:11}
D_q(r)=\frac{1}{r^{N-2s}}\Big(\int_{B_r^+} y^{1-2s} \left( |\nabla V^{R_{\la_{p_q}}\la_{p_q}}|^2 -\frac{\alpha}{|x|_k^2}|V^{R_{\la_{p_q}}\la_{p_q}}|^2 \right)\, dz\\
-c_{N,s}(R_{\la_{p_q}}\la_{p_q})^{2s}\int_{B_r'} g(R_{\la_{p_q}}\la_{p_q}\cdot) |\Tr(V^{R_{\la_{p_q}}\la_{p_q}})|^2 \, dx\Big)
\end{multline}
and 
\begin{equation}\label{dim_blow_up:12}
H_q(r)=\frac{1}{r^{N+1-2s}}\int_{S_r^+} y^{1-2s} |V^{R_{\la_{p_q}}\la_{p_q}}|^2\, dS.
\end{equation}
For any $r \in (0,1]$ we also define 
\begin{equation}\label{dim_blow_up:13}
D_V(r)=\frac{1}{r^{N-2s}}\int_{B_r^+} y^{1-2s} \left( |\nabla V|^2 -\frac{\alpha}{|x|_k^2}|V|^2 \right)\, dz\\
\end{equation}
and 
\begin{equation}\label{dim_blow_up:14}
H_V(r)=\frac{1}{r^{N+1-2s}}\int_{S_r^+} y^{1-2s} |V|^2\, dS.
\end{equation}
Thanks to a scaling argument it is easy to see that 
\begin{equation}\label{dim_blow_up:15}
\mc{N}_q(r):=\frac{D_q(r)}{H_q(r)}=\frac{D(R_{\la_{p_q}}\la_{p_q}r)}{H(R_{\la_{p_q}}\la_{p_q}r)}=\mc{N}(R_{\la_{p_q}}\la_{p_q}r) \quad \text{ for any } r \in (0,1].
\end{equation} 
By \eqref{dim_blow_up:2},  \eqref{dim_blow_up:5} and Remark \ref{remark_equivalence_norm_H10Sr+}, it  follows that 
\begin{equation}\label{dim_blow_up:16}
H_q(r) \to H_V(r) \quad \text{ and } \quad D_q(r) \to D_V(r), \quad \text{ as } q \to \infty, \text{ for any } r \in (0,1].
\end{equation}
Furthermore $H_V(r)>0$ for any $r \in (0,1]$ by Proposition \ref{prop_H_not_0} in the case $g\equiv 0$ and $\Omega = B_2'$.
In particular the function 
\begin{equation}\label{dim_blow_up:17}
\mc{N}:(0,1] \to \R, \quad \mc{N}_V(r):=\frac{D_V(r)}{H_V(r)}
\end{equation}
is well defined and $\mc{N}_V \in W^{1,1}_{loc}((0,1])$ by Proposition \ref{prop_N'} in the case $g\equiv 0$ and $\Omega = B_2'$.
In view of \eqref{dim_blow_up:14}, \eqref{limit_N}
\begin{equation}\label{dim_blow_up:18}
\mc{N}_V(r)=\lim_{q \to \infty} \mc{N}(R_{\la_{p_q}}\la_{p_q}r)=\gamma  \quad \text{ for any } r \in (0,1].
\end{equation}
Hence $\mc{N}_V(r)$ is constant in $[0,1]$ and so 
\begin{equation}\label{dim_blow_up:19}
\mc{N}'_V(r)\equiv 0 \text{ for any } r \in (0,1].
\end{equation}
By Proposition \ref{prop_N'} it follows that 
\begin{equation}\label{dim_blow_up:20}
\left(\int_{S_r^+}y^{1-2s}V^2\, dS\right)\left(\int_{S_r^+}y^{1-2s}\left|\pd{V}{\nu}\right|^2 \, dS\right)-\left(\int_{S_r^+}y^{1-2s}V \pd{V}{\nu} \, dS\right)^2=0
\end{equation}
for a.e. $r \in (0,1)$, that is, equality holds in the Cauchy-Schwartz inequality for the vectors $V$ and $\pd{V}{\nu}$ in $L^2(S_r^+,y^{1-2s})$ for  a.e. $r \in (0,1)$.
Therefore there exists a function $\eta(r)$ defined a.e. in $(0,1)$ such that 
\begin{equation}\label{dim_blow_up:21}
\pd{V}{\nu}(r\theta)=\eta(r) V(r\theta) \quad  \text{ for a.e. r } \in (0,1) \text{ and a.e.  } \theta \in \mb{S}^+.
\end{equation}
 Multiplying by $V(r\theta)$ and integrating over $\mb{S}^+$,
\begin{equation}\label{dim_blow_up:22}
\int_{\mb{S}^+}\theta_{N+1}^{1-2s}\pd{V}{\nu}(r\theta)V(r\theta) \, dS=\eta(r)\int_{\mb{S}^+}\theta_{N+1}^{1-2s}|V(r\theta)|^2 \, dS \quad  \text{ for a.e. r } \in (0,1)
\end{equation}
 and so 
$\eta(r)=\frac{H'_V(r)}{2H_V(r)}=\frac{\gamma}{r}$ for a.e. $r  \in (0,1)$ by \eqref{eq_H'}, \eqref{eq_H'} and \eqref{dim_blow_up:18}. Since $V$ is smooth away from $\Sigma_k$  by classical elliptic regularity theory (see \eqref{def_Sigma}), an integration over $(r,1)$ yields 
\begin{equation}\label{dim_blow_up:23}
V(r\theta)=r^\gamma V(1\theta)=r^\gamma Z(\theta) \quad  \text{ for any r } \in (0,1] \text{ and a.e.  } \theta \in \mb{S}^+, 
\end{equation}
where $Z=V_{|\mb{S}^+}$ and  $\norm{Z}_{L^2(\mb{S}^+,\theta^{1-2s}_{N+1})}=1$ by \eqref{dim_blow_up:1} . In view of \cite[Lemma 1.1]{FF}, \eqref{dim_blow_up:23} and \eqref{dim_blow_up:9.1}
the function $Z$ is an eigenfunction of problem \eqref{prob_eigenvalue_sphere} and the correspondent eigenvalue $\gamma_{\alpha,k,n}$ satisfies the relationship $\gamma(N-2s+\gamma)=  
\gamma_{\alpha,k,n}$,  that is   
\begin{equation}\label{dim_blow_up:24}
\gamma =-\frac{N-2s}{2}+\sqrt{\left(\frac{N-2s}{2}\right)^2 +\gamma_{\alpha,k,n}} \;\;\text{ or } \;\;\gamma =-\frac{N-2s}{2} -\sqrt{\left(\frac{N-2s}{2}\right)^2 +\gamma_{\alpha,k,n}}
\end{equation}
Since $r^\gamma Z(\theta) \in H^1(B_1^+,y^{1-2s})$ by \eqref{dim_blow_up:23} then $r^{2\gamma-2} Z^2(\theta) \in L^1(B_1^+,y^{1-2s})$ by \eqref{ineq_hardy_with_boundary} and so   we conclude that 
\eqref{eq_gamma_eigenvalue} must hold.

Consider now the sequence $\{V^{\la_{p_q}}\}_{q \in \mathbb{N}}$. Up to a further subsequence, $V^{\la_{p_q}} \rightharpoonup \tilde{V}$  weakly in $H^1(B_1^+,y^{1-2s})$ as $q \to \infty$, for some  
$\tilde{V} \in H^1(B_1^+,y^{1-2s})$ and $R_{\la_{p_q}} \to \tilde{R}$, for some $\tilde{R} \in [1,2]$ as $q \to \infty$. The strong convergence of  $\{V^{R_{\la_{p_q}}\la_{p_q}}\}_{q \in \mathbb{N}}$ 
to $V$ in $H^1(B_1^+,y^{1-2s})$ implies that, up to a further subsequence, both $V^{R_{\la_{p_q}}\la_{p_q}}$ and $\left|\nabla V^{R_{\la_{p_q}}\la_{p_q}}\right|$ are dominated a.e. by a 
$L^2(B_1^+,y^{1-2s})$ function, uniformly with respect to $q \in \mathbb{N}$. Up to a further subsequence, we may also assume that the limit 
\begin{equation}\label{dim_blow_up:25}
\ell =\lim_{q\to \infty}\frac{H(R_{\la_{p_q}}\la_{p_q})}{H(\la_{p_q})}
\end{equation}
exists, it is finite and strictly positive, taking into account \eqref{ineq_doubling_H}. Then from the Dominated Convergence Theorem  and a change of variables we deduce that 
\begin{multline}\label{dim_blow_up:26}
\lim_{q\to \infty}\int_{B_1^+} y^{1-2s} V^{\la_{p_q}}(z) \phi(z) \, dz= 
\lim_{q\to \infty}R_{\la_{p_q}}^{N+2-2s}\int_{B_{1/R_{\la_{p_q}}}^+} y^{1-2s}V^{\la_{p_q}}(R_{\la_{p_q}}z)\phi(R_{\la_{p_q}} z) \, dz\\
=\lim_{q\to \infty}R_{\la_{p_q}}^{N+2-2s}\sqrt{\frac{H(R_{\la_{p_q}}\la_{p_q})}{H(\la_{p_q})}}
\int_{B_{1}^+} y^{1-2s}\chi_{B_{1/R_{\la_{p_q}}}^+}(z)V^{R_{\la_{p_q}}\la_{p_q}}(z) \phi(R_{\la_{p_q}} z) \, dz\\
=\tilde{R}^{N+2-2s} \sqrt{\ell} \int_{B_{1/\tilde{R}}^+} y^{1-2s}V(z) \phi(\tilde{R} z) \, dz= \sqrt{\ell} \int_{B_1^+} y^{1-2s}V(z/\tilde{R}) \phi( z) \, dz
\end{multline}
for any $\phi \in C^\infty(\overline{B_1^+})$. By density we conclude that $V^{\la_{p_q}} \rightharpoonup \sqrt{\ell}V(\cdot/\tilde{R})$ weakly in  $L^2(B_1^+,y^{1-2s})$ as $q \to \infty$.
Since $V^{\la_{p_q}} \rightharpoonup \tilde{V}$ weakly in  $H^1(B_1^+,y^{1-2s})$ as $q \to \infty$ we conclude that $\tilde{V}=\sqrt{\ell}V(\cdot/\tilde{R})$ and so $V^{\la_{p_q}} \rightharpoonup \sqrt{\ell}V(\cdot/\tilde{R})$ weakly in  $H^1(B_1^+,y^{1-2s})$ as $q \to \infty$. Furthermore
\begin{multline}\label{dim_blow_up:27}
\lim_{q\to \infty} \int_{B_1^+}y^{1-2s} |\nabla V^{\la_{p_q}}(z)|^2 \, dz= 
\lim_{q\to \infty}R_{\la_{p_q}}^{N+2-2s}\int_{B_{1/R_{\la_{p_q}}}^+} y^{1-2s} |\nabla V^{\la_{p_q}}(R_{\la_{p_q}} z)|^2\, dz\\
=\lim_{q\to \infty}R_{\la_{p_q}}^{N-2s}\frac{H(R_{\la_{p_q}}\la_{p_q})}{H(\la_{p_q})}
\int_{B_{1}^+} y^{1-2s}\chi_{B_{1/R_{\la_{p_q}}}^+}(z)|\nabla V^{R_{\la_{p_q}}\la_{p_q}}(z)|^2 \, dz\\
=\tilde{R}^{N-2s} \ell \int_{B_{1/\tilde{R}}^+} y^{1-2s}|\nabla V|^2 dz=  \int_{B_1^+} y^{1-2s}|\sqrt{\ell}\nabla V(\cdot/\tilde{R})|^2\, dz,
\end{multline}
by the Dominated Convergence Theorem and a change of variables. Hence $V^{\la_{p_q}} \to \sqrt{\ell}V(\cdot/\tilde{R})$ strongly in  $H^1(B_1^+,y^{1-2s})$ as $q \to \infty$.

Thanks to \eqref{dim_blow_up:23}, $V$ is a homogeneous function of degree $\gamma$ and so $\tilde{V}=\sqrt{\ell}\tilde{R}^{-\gamma} V$. Moreover, since $V^{\la_{p_q}} \to \tilde{V}$ strongly in  $L^2(\mathbb{S}^+,\theta_{N+1}^{1-2s})$ as $q \to \infty$ by Proposition \ref{prop_trace_Sr+}, 
\begin{equation}\label{dim_blow_up:28}
1=\int_{\mathbb{S}^+} \theta_{N+1}^{1-2s} |\tilde{V}(\theta)|^2 dS=\sqrt{\ell}\tilde{R}^{-\gamma}\int_{\mathbb{S}^+} \theta_{N+1}^{1-2s} |V(\theta)|^2 dS=\sqrt{\ell}\tilde{R}^{-\gamma}
\end{equation}
in view of  \eqref{eq_blow_up_solution=1_S_1} and  \eqref{dim_blow_up:1}.
We conclude that $\tilde{V} =V$ thus completing the proof. 
\end{proof}

Now we  show that the limit \eqref{limit_H_exists_finite} is strictly positive, by means  of a Fourier analysis with  respect to the $L^2(\mb{S}^+,\theta_{N+1}^{1-2s})$-orthonormal basis 
$\{Z_{\alpha,k,n}\}_{n\in \mb{N}\setminus\{0\}}$ of  eigenfunctions of problem \eqref{prob_eigenvalue_sphere}, see Subsection \ref{subsec_an_eigenvalue_problem_on_S+}. To this end let us define for any 
$k \in \{3, \dots,N\}$, $\alpha$ as in \eqref{hp_alpha}, and $n \in \mathbb{N} \setminus\{0\}$
\begin{equation}\label{def_fourier_coefficents}
\varphi_{n,i}(\la):=\int_{\mathbb{S}^+} \theta_{N+1}^{1-2s} U(\la \theta)Z_{\alpha,k,n,i}(\theta) \, dS, \quad \text{ for any } \la \in (0,r_0], \;i\in{1,\dots,M_{\alpha,k,n}},
\end{equation}
see \eqref{def_dimension-eigenspace} for the definition of $M_{\alpha,k,n}$, and 
\begin{equation}\label{def_fourier_coefficents_g}
\Upsilon_{n,i}(\la):=c_{N,s} \int_{B'_\la} g \Tr(U) \Tr\left(Z_{\alpha,k,n,i}\left(\frac{\cdot}{|\cdot|}\right)\right)\, dx,    .
\end{equation}
for any $\la \in (0,r_0], \; i\in{1,\dots,M_{\alpha,k,n}}$. Thanks to  Proposition \ref{prop_limit_N} and Proposition \ref{prop_blow_up} there exists $n_0 \in \mathbb{N} \setminus \{0\}$ such that 
\begin{equation}\label{eq_gamma_limit_eigenvalue}
\gamma=\lim_{r \to 0^+} \mathcal{N}(r)=-\frac{N-2s}{2}+\sqrt{\left(\frac{N-2s}{2}\right)^2+\gamma_{\alpha,k,n_0}}.
\end{equation}
For any $i \in\{1,\dots, M_{\alpha,k,n_0}\}$ we need to compute the asymptotics of $\varphi_{n_0,i}(\la)$ as $\la \to 0^+$.

\begin{proposition} \label{prop_fourier_coefficents}
Let ${n_0}$ be as in  \eqref{eq_gamma_limit_eigenvalue}. Then for any $i \in\{1,\dots, M_{\alpha,k,n_0}\}$ and any $r \in (0,r_0]$
\begin{multline}\label{eq_asymptotic_fourier}
\varphi_{n_0,i}(\la)=\la^{\gamma}\Bigg(\frac{\varphi_{n_0,i}(r)}{r^{\gamma}}
+\frac{\gamma r^{-N+2s-2\gamma}}{N-2s+2\gamma}\int_{0}^{r}\rho^{-1+\rho}\Upsilon_{n_0,i}(\rho) d\rho\\
+\frac{N-2s+\gamma}{N-2s+2 \gamma}\int_{\la}^{r}\rho^{-N-1+2s-\gamma} \Upsilon_{n_0,i}(\rho )\, d\rho\Bigg)+ O(\la^{\gamma+\e})\quad \text{ as } \la \to 0^+.
\end{multline}
\end{proposition}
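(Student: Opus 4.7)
The strategy is to derive a second-order linear ODE for $\varphi_{n_0,i}$, solve it by variation of parameters with base point $r$, and then use the a priori bound on the height function $H$ to fix the coefficient of the unbounded homogeneous solution.

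\textbf{Step 1 (ODE).} I would test the weak formulation \eqref{eq_extension_with_equation} against $\phi(z)=\psi(|z|)Z_{\alpha,k,n_0,i}(z/|z|)$ for $\psi\in C_c^\infty((0,r_0))$, handling the possible lack of smoothness of $Z_{\alpha,k,n_0,i}$ across $\Sigma_k$ by a standard cut-off approximation. Passing to polar coordinates $z=\rho\theta$ and using the spherical eigenvalue identity \eqref{eq_egienvalue_sphere} to turn the tangential energy into a multiple of $\varphi_{n_0,i}$ plus the singular potential term, one arrives at
\begin{equation*}
\int_0^\infty\psi'(\rho)\rho^{N+1-2s}\varphi_{n_0,i}'(\rho)\,d\rho+\gamma_{\alpha,k,n_0}\int_0^\infty\psi(\rho)\rho^{N-1-2s}\varphi_{n_0,i}(\rho)\,d\rho=\int_0^\infty\psi(\rho)\Upsilon_{n_0,i}'(\rho)\,d\rho,
\end{equation*}
which, by arbitrariness of $\psi$, is equivalent to the distributional identity $-(\lambda^{N+1-2s}\varphi_{n_0,i}')'+\gamma_{\alpha,k,n_0}\lambda^{N-1-2s}\varphi_{n_0,i}=\Upsilon_{n_0,i}'$ on $(0,r_0)$.

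\textbf{Step 2 (Variation of parameters).} The homogeneous equation has indicial roots $\mu(\mu+N-2s)=\gamma_{\alpha,k,n_0}$, hence two independent solutions $\lambda^\gamma$ and $\lambda^{\gamma^-}$ with $\gamma^-=-(N-2s)-\gamma$. Variation of parameters with base point $r$, followed by an integration by parts that replaces $\Upsilon_{n_0,i}'$ by $\Upsilon_{n_0,i}$, yields
\begin{multline*}
\varphi_{n_0,i}(\lambda)=A\lambda^\gamma+B\lambda^{\gamma^-}+\tfrac{1}{N-2s+2\gamma}\Bigl[r^{\gamma^-}\Upsilon_{n_0,i}(r)\lambda^\gamma+(N-2s+\gamma)\lambda^\gamma\!\!\int_\lambda^r\!\!\rho^{\gamma^--1}\Upsilon_{n_0,i}(\rho)\,d\rho\\
-r^\gamma\Upsilon_{n_0,i}(r)\lambda^{\gamma^-}+\gamma\lambda^{\gamma^-}\!\!\int_\lambda^r\!\!\rho^{\gamma-1}\Upsilon_{n_0,i}(\rho)\,d\rho\Bigr]
\end{multline*}
for suitable constants $A,B\in\R$, with the relation $\varphi_{n_0,i}(r)=Ar^\gamma+Br^{\gamma^-}$ coming from the vanishing of the particular part at $\lambda=r$.

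\textbf{Step 3 (Bounds on $\Upsilon$ and closure).} By H\"older's inequality, Proposition~\ref{prop_h_D+H} applied with $h=g$, the monotonicity bounds \eqref{ineq_N_upper_estimate} and \eqref{ineq_H_upper_estimate}, and a trace estimate for $Z_{\alpha,k,n_0,i}(\cdot/|\cdot|)$ on $B'_\rho$, I would establish
\begin{equation*}
|\Upsilon_{n_0,i}(\rho)|\le C\rho^{N-2s+\gamma+\varepsilon}\qquad\text{for all }\rho\in(0,r_0].
\end{equation*}
This makes $\int_0^r\rho^{\gamma-1}\Upsilon_{n_0,i}$ and $\int_0^r\rho^{\gamma^--1}\Upsilon_{n_0,i}$ absolutely convergent and shows that the tail terms $\lambda^\gamma\!\int_0^\lambda\rho^{\gamma^--1}\Upsilon_{n_0,i}$ and $\lambda^{\gamma^-}\!\int_0^\lambda\rho^{\gamma-1}\Upsilon_{n_0,i}$ are both $O(\lambda^{\gamma+\varepsilon})$. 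Using the trivial a priori bound $|\varphi_{n_0,i}(\lambda)|\le\sqrt{H(\lambda)}\le\sqrt{K}\,\lambda^\gamma$ (Cauchy--Schwarz on $\mathbb{S}^+$ combined with \eqref{ineq_H_upper_estimate}), and noting $\gamma^-<\gamma$, I can force the persistent part of the $\lambda^{\gamma^-}$-coefficient to vanish, obtaining
\begin{equation*}
B=\tfrac{r^\gamma\Upsilon_{n_0,i}(r)}{N-2s+2\gamma}-\tfrac{\gamma}{N-2s+2\gamma}\int_0^r\rho^{\gamma-1}\Upsilon_{n_0,i}(\rho)\,d\rho.
\end{equation*}
Plugging this into $A=r^{-\gamma}\bigl(\varphi_{n_0,i}(r)-Br^{\gamma^-}\bigr)$ and substituting everything back into the formula of Step~2 gives exactly \eqref{eq_asymptotic_fourier}, with all truncation errors absorbed into the $O(\lambda^{\gamma+\varepsilon})$ term.

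\textbf{Main obstacle.} The quantitative bound $|\Upsilon_{n_0,i}(\rho)|\le C\rho^{N-2s+\gamma+\varepsilon}$ is the crux: the exponent has to be sharp enough that, after multiplication by $\lambda^{\gamma^-}$ or $\lambda^\gamma$, every truncated integral is exactly of order $\lambda^{\gamma+\varepsilon}$ and no worse. This is where the interplay between the Hardy trace inequality and the Almgren-type monotonicity results of Section~\ref{sec_monotonicity} is genuinely needed.
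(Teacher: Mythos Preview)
Your proposal is correct and follows essentially the same route as the paper: derive the Euler-type ODE for $\varphi_{n_0,i}$ by testing the extended equation against a radially modulated spherical eigenfunction, integrate (variation of parameters versus the paper's equivalent two successive integrations of the factorized form $-(\lambda^{N+1-2s+2\gamma}(\lambda^{-\gamma}\varphi_{n_0,i})')'=\lambda^{\gamma}\Upsilon_{n_0,i}'$), establish the key bound $|\Upsilon_{n_0,i}(\rho)|\le C\rho^{N-2s+\gamma+\varepsilon}$, and then eliminate the coefficient of the singular homogeneous solution $\lambda^{\gamma^-}$.

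The only genuine difference is in the last step. You kill the $\lambda^{\gamma^-}$-contribution with the pointwise a priori bound $|\varphi_{n_0,i}(\lambda)|\le\sqrt{H(\lambda)}\le\sqrt{K}\,\lambda^{\gamma}$ coming from \eqref{ineq_H_upper_estimate}; the paper instead uses the integral bound $\int_0^{r_0}\lambda^{N-1-2s}|\varphi_{n_0,i}(\lambda)|^2\,d\lambda\le\int_{B_{r_0}^+}y^{1-2s}|z|^{-2}U^2\,dz<\infty$, obtained from a weighted Hardy inequality, and reaches a contradiction since $\lambda^{N-1-2s}\lambda^{2\gamma^-}$ is not integrable at $0$. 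Your argument is slightly more direct (it exploits the already-proved monotonicity estimate rather than re-invoking a Hardy inequality), while the paper's argument is logically independent of \eqref{ineq_H_upper_estimate}. Both are valid and produce exactly the same remainder order $O(\lambda^{\gamma+\varepsilon})$.
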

\begin{proof}
Let $n \in \mathbb{N}$ and $i \in \{1,\dots,M_{\alpha,k,n}\}$. Let  $f \in C_c^\infty(0,r_0)$. Then testing \eqref{eq_extension_with_equation} with the function $|z|^{N+1-2s}f(|z|) Z_{\alpha,k,n,i}(z/|z|)$ and 
passing in polar coordinates, by \eqref{eq_egienvalue_sphere}, we obtain 
\begin{equation}\label{dim_fourier_coefficents:1}
-\varphi_{n,i}''(\la)-\frac{N+1-2s}{\la }\varphi_{n,i}'(\la)+\frac{\gamma_{\alpha,k,n}}{\la^2}\varphi_{n,i}(\la)=\zeta_{n,i}(\la)   \quad  \text{ in } (0,r_0)
\end{equation}
in a distributional sense, where the distribution $\zeta_{n,i} \in \mathcal{D}'(0,r_0)$ is define as
\begin{equation}\label{dim_fourier_coefficents:2}
\df{\mathcal{D}'(0,r_0)}{\zeta_{n,i}}{f}{\mathcal{D}(0,r_0)}= 
\int_{0}^{r_0} \frac{f(\la)}{\la^{2-2s}} \left( \int_{\mb{S'}} g(\la \cdot ) \Tr(U)(\la \cdot ) \Tr\left(Z_{\alpha,k,n,i}\left(\frac{\cdot}{|\cdot|}\right)\right)\, dS'\right) \, d \la,
\end{equation}
for any $f \in C_c^\infty(0,r_0)$. In particular $\zeta_{n,i}$ belongs to $L^1_{loc}((0,r_0])$ by  the Coarea Formula and a change of variables. If $\Upsilon_{n,i} $ is as in \eqref{def_fourier_coefficents_g}, a direct computation shows that 
\begin{equation}\label{dim_fourier_coefficents:3}
\Upsilon'_{n,i}(\la)=\la^{N+1-2s}\zeta_{n,i}(\la)   \quad  \text{ in } \mathcal{D}'(0,r_0)
\end{equation}
hence 
\begin{equation}\label{dim_fourier_coefficents:4}
-\left(\la^{N+1-2s+2\sigma_n}\left(\la^{-\sigma_n} \varphi_{n,i}(\la)\right)'\right)'=\la^{\sigma_n} \Upsilon'_{n,i}(\la) \quad  \text{ in } \mathcal{D}'(0,r_0),
\end{equation}
where
\begin{equation}\label{dim_fourier_coefficents:5}
\sigma_n:=-\frac{N-2s}{2}+\sqrt{\left(\frac{N-2s}{2}\right)^2+\gamma_{\alpha,k,n.}}
\end{equation}
From \eqref{dim_fourier_coefficents:4} and \eqref{dim_fourier_coefficents:2} we deduce that $\la \to \la^{N+1-2s+2\sigma_n}\left(\la^{-\sigma_n} \varphi_{n,i}'(\la)\right)$ belongs to 
$W^{1,1}_{loc}((0,r_0])$ hence an integration over $(\la, r)$ yields  
\begin{multline}\label{dim_fourier_coefficents:6}
\left(\la^{-\sigma_n} \varphi_{n,i}(\la)\right)'=-\la^{-N-1+2s-\sigma_n}\Upsilon_{n,i}(\la)\\
-\la^{-N-1+2s-2\sigma_n}\sigma_n\left(C(r)+\int_{\la}^r\rho^{\sigma_n-1}\Upsilon_{n,i}(\rho) \, d\rho\right)
\end{multline}
for any $ r\in (0,r_0]$, for some real number $C(r)$ depending on $r, \alpha,k,n$ and $i$.
Since in view of  \eqref{dim_fourier_coefficents:6} $\la \to \la^{-\sigma_n} \varphi_{n,i}(\la)$ belongs to  $W^{1,1}_{loc}((0,r_0])$, a  further integration  yields
\begin{multline}\label{dim_fourier_coefficents:7}
\varphi_{n,i}(\la)=\la^{\sigma_n}\Bigg(r^{-\sigma_n}\varphi_{n,i}(r)+ \int_{\la}^{r}\rho^{-N-1+2s-\sigma_n}\Upsilon_{n,i}(\rho )\, d \rho\\
+\sigma_n\int_{\la}^r\rho^{-N-1+2s-2\sigma_n}\left(C(r)+\int_{\rho}^r t^{\sigma_n-1}\Upsilon_{n,i}(t) \, dt\right)\, d \rho\Bigg)\\
=\la^{\sigma_n}\Bigg(r^{-\sigma_n}\varphi_{n,i}(r)+ \int_{\la}^{r}\rho^{-N-1+2s-\sigma_n}\Upsilon_{n,i}(\rho )\, d\rho
+\frac{\sigma_nC(r)r^{-N+2s-2\sigma_n}}{-N+2s-2\sigma_n}\\
-\frac{\sigma_nC(r)\la^{-N+2s-2\sigma_n}}{-N+2s-2\sigma_n}-\frac{\sigma_n \la^{-N+2s-2\sigma_n}}{-N+2s-2\sigma_n}\int_{\la}^r t^{\sigma_n-1}\Upsilon_{n,i}(t) \, dt\\ +\frac{\sigma_n}{-N+2s-2\sigma_n}\int_{\la}^r\rho^{-N-1+2s-\sigma_n}\Upsilon_{n,i}(\rho )\, d\rho \Bigg)\\
=\la^{\sigma_n}\Bigg(\frac{\varphi_{n,i}(r)}{r^{\sigma_n}}-\frac{\sigma_nC(r)r^{-N+2s-2\sigma_n}}{N-2s+2\sigma_n}
+\frac{N-2s+\sigma_n}{N-2s+2 \sigma_n}\int_{\la}^{r}\rho^{-N-1+2s-\sigma_n}\Upsilon_{n,i}(\rho )\, d\rho\Bigg)\\
+\frac{\sigma_n\la^{-N+2s-\sigma_n}}{N-2s+2\sigma_n}\left(C(r)+\int_{\la}^r t^{\sigma_n-1}\Upsilon_{n,i}(t) \, dt\right)
\end{multline}
for any $\la \in (0,r_0]$.

Let $n_0$ be as in \eqref{eq_gamma_limit_eigenvalue} and $i  \in \{1,\dots,M_{\alpha,k,n_0}\}$. By \eqref{eq_gamma_limit_eigenvalue} and \eqref{dim_fourier_coefficents:5}, $\gamma=\sigma_{n_0}$ and 
\begin{multline}\label{dim_fourier_coefficents:8}
\la^{-N-1+2s-\gamma}\left|\Upsilon_{n_0,i}(\la)\right|
<c_{N,s}\la^{-N-1+2s-\gamma} \int_{B'_\la} |g| |\Tr(U)\ \left|\Tr\left(Z_{\alpha,k,n,i}\left(\frac{\cdot}{|\cdot|}\right)\right)\right|\, dx \\
\le \la^{-N-1+2s-\gamma}\left(\int_{B'_\la} |g| |\Tr(U)|^2 dx \right)^\frac12
\left(\int_{B'_\la} |g| \left|\Tr\left(Z_{\alpha,k,n,i}\left(\frac{\cdot}{|\cdot|}\right)\right)\right|^2 dx \right)^\frac12 \\
\le  k_{N,s,g}\la^{-N-1+2s-\gamma+\e}\left(\int_{B_\la^+}y^{1-2s}|\nabla U|^2\, dz-\int_{B_\la^+}y^{1-2s}\frac{\alpha}{|x|_k^2}U^2\, dz
+\frac{N-2s}{2\la}\int_{S_\la^+}y^{1-2s}U^2\, dz\right)^\frac12\\
\times \Bigg(\int_{B_\la^+}y^{1-2s}|\nabla Z_{\alpha,k,n,i}(z/|z|)|^2\, dz-\int_{B_\la^+}y^{1-2s}\frac{\alpha}{|x|_k^2} |Z_{\alpha,k,n,i}(z/|z|)|^2\, dz \\
+\frac{N-2s}{2\la}\int_{S_\la^+}y^{1-2s} |Z_{\alpha,k,n,i}(z/|z|)|^2\, dz\Bigg)^\frac12 \\
=k_{N,s,g}\la^{-1-\gamma+\e}\sqrt{H(\la)}\left(\int_{B_1^+}y^{1-2s}|\nabla V^\la|^2\, dz-\int_{B_1^+}y^{1-2s}\frac{\alpha}{|x|_k^2}|V^\la|^2\, dz+\frac{N-2s}{2}\right)^\frac12\\
\times\left(\int_{B_1^+}y^{1-2s}|\nabla  Z_{\alpha,k,n,i}(z/|z|)|^2\, dz-\int_{B_1^+}y^{1-2s}\frac{\alpha}{|x|_k^2}| Z_{\alpha,k,n,i}(z/|z|)|^2\, dz+\frac{N-2s}{2}\right)^\frac12\\
\le {\rm{const}}\;\la^{-1+\e}
\end{multline}
for any $\la \in (0,r_0]$, by Holder inequality, a change of variables, \eqref{hp_g}, \eqref{ineq_h_D+H},  \eqref{ineq_H_upper_estimate},  \eqref{def_blow_up_solution}, 
\eqref{eq_blow_up_solution=1_S_1}, \eqref{def_fourier_coefficents_g}.
Hence 
\begin{equation}\label{dim_fourier_coefficents:9}
\left|\Upsilon_{n_0,i}(\la)\right|\le {\rm{const}}\ \la^{N-2s+\gamma+\e} \quad  \text{ for any } \la \in (0,r_0].
\end{equation}
Now we show that for any $r \in (0,r_0]$
\begin{equation}\label{dim_fourier_coefficents:12}
C(r)+\int_{0}^{r}\la^{-1+\gamma}\Upsilon_{n_0,i}(\la) d\la=0.
\end{equation}
From \eqref{dim_fourier_coefficents:9} it is clear that $\int_{0}^{r_0}\la^{-1+\gamma}\Upsilon_{n_0,i}(\la) d\la<+\infty$. We argue by contradiction. Since 
$\sigma_{n_0}=\gamma > -\frac{N-2s}{2}$ by \eqref{eq_gamma_limit_eigenvalue} and \eqref{dim_fourier_coefficents:5}, then from  \eqref{dim_fourier_coefficents:7} we deduce that 
\begin{equation}\label{dim_fourier_coefficents:13}
\varphi_{n_0,i}(\la)\sim\frac{\gamma\la^{-N+2s-\gamma}}{N-2s+2\gamma}\left(C(r)+\int_{\la}^r t^{-1+\gamma}\Upsilon_{n_0,i}(t) \, dt\right) \quad  \text{ as } \la \to 0^+
\end{equation}
and so by \eqref{eq_gamma_limit_eigenvalue}
\begin{equation}\label{dim_fourier_coefficents:14}
\int_{0}^{r_0}\la^{N-1-2s}|\varphi_{n_0,i}(\la)|^2 \, d \la =+\infty.
\end{equation}
On the other hand by H\"older inequality, a change of variables, \eqref{def_fourier_coefficents} and \cite[Lemma 2.4] {FF}
\begin{multline}\label{dim_fourier_coefficents:15}
\int_{0}^{r_0}\la^{N-1-2s}|\varphi_{n_0,i}(\la)|^2 \, d \la \le\int_{0}^{r_0}\la^{N-1-2s}\left(\int_{\mb{S}^+}\theta_{N+1}^{1-2s} |U(\la\theta)|^2 \,dS \right) d \la \\
= \int_{B^+_{r_0}} y^{1-2s} \frac{U^2}{|z|^2} \, dz <+\infty,
\end{multline}
which contradicts \eqref{dim_fourier_coefficents:14}. It follows that 
\begin{multline}\label{dim_fourier_coefficents:16}
\la^{-N+2s-\gamma}\left|C(r)+\int_{\la}^{r}\la^{-1+\gamma}\Upsilon_{n_0,i}(\la) d\la\right|=\la^{-N+2s-\gamma}\left|\int_{0}^{\la}\la^{-1+\gamma}\Upsilon_{n_0,i}(\la) d\la\right|\\
= O(\la^{\gamma+\e}),
\end{multline}
in view of \eqref{dim_fourier_coefficents:9}. In conclusion \eqref{eq_asymptotic_fourier} follows from 
\eqref{dim_fourier_coefficents:7},  \eqref{dim_fourier_coefficents:12},  and \eqref{dim_fourier_coefficents:16}. 
\end{proof}

\begin{proposition}\label{prop_limit_H_positive}
Let $U$ be a non-trivial solution of \eqref{eq_extension_with_equation} and $\gamma$ be as in \eqref{limit_N}. Then 
\begin{equation}\label{limit_H_positive}
\lim_{r \to 0^+}r^{-2\gamma}H(r)>0.
\end{equation}
\end{proposition}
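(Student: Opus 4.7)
The plan is to argue by contradiction, supposing that $L := \lim_{r \to 0^+}r^{-2\gamma}H(r) = 0$ (the limit exists and is finite by Proposition \ref{prop_limit_H_exists_finite}) and deriving a clash with the lower bound \eqref{ineq_H_lower_estimate}. The key object is the Fourier decomposition of $U(\la \cdot)|_{\mb{S}^+}$ in the orthonormal basis $\{Z_{\alpha,k,n,i}\}$ of $L^2(\mb{S}^+,\theta_{N+1}^{1-2s})$. Passing to polar coordinates in \eqref{def_H} gives
\begin{equation*}
H(\la)=\int_{\mb{S}^+}\theta_{N+1}^{1-2s}\,U(\la\theta)^2\,dS,
\end{equation*}
so that, by Parseval and \eqref{def_fourier_coefficents}, $H(\la)=\sum_{n,i}\varphi_{n,i}(\la)^2$. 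In particular, $\sum_{i=1}^{M_{\alpha,k,n_0}}\varphi_{n_0,i}(\la)^2\le H(\la)$ for every $\la\in(0,r_0]$.

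Next I would invoke the blow-up analysis of Proposition \ref{prop_blow_up}(ii) along an arbitrary sequence $\la_p\to 0^+$: extracting a subsequence $\la_{p_q}\to 0^+$, the rescalings $V^{\la_{p_q}}$ defined in \eqref{def_blow_up_solution} converge strongly in $H^1(B_1^+,y^{1-2s})$ to $|z|^\gamma Z(z/|z|)$, where $Z\in V_{\alpha,k,n_0}$ satisfies $\norm{Z}_{L^2(\mb{S}^+,\theta_{N+1}^{1-2s})}=1$. Expanding $Z=\sum_{i=1}^{M_{\alpha,k,n_0}}\beta_i Z_{\alpha,k,n_0,i}$ yields $\sum_i\beta_i^2=1$, so at least one coefficient $\beta_{i_*}$ is non-zero. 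The compactness of the trace $\mathop{\rm{Tr}_{S_1^+}}$ from Proposition \ref{prop_trace_Sr+} upgrades the convergence on $\mb{S}^+$ to $L^2(\mb{S}^+,\theta_{N+1}^{1-2s})$, so that
\begin{equation*}
\frac{\varphi_{n_0,i}(\la_{p_q})}{\sqrt{H(\la_{p_q})}}=\int_{\mb{S}^+}\theta_{N+1}^{1-2s}V^{\la_{p_q}}(\theta)Z_{\alpha,k,n_0,i}(\theta)\,dS\longrightarrow \beta_i,\quad q\to\infty,
\end{equation*}
for every $i\in\{1,\dots,M_{\alpha,k,n_0}\}$.

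Now Proposition \ref{prop_fourier_coefficents} provides the asymptotic
$\varphi_{n_0,i}(\la)=R_{n_0,i}\la^\gamma+O(\la^{\gamma+\e})$ as $\la\to 0^+$, where
\begin{equation*}
R_{n_0,i}=\frac{\varphi_{n_0,i}(r)}{r^\gamma}+\frac{\gamma\, r^{-N+2s-2\gamma}}{N-2s+2\gamma}\int_0^r\rho^{-1+\gamma}\Upsilon_{n_0,i}(\rho)\,d\rho+\frac{N-2s+\gamma}{N-2s+2\gamma}\int_0^r\rho^{-N-1+2s-\gamma}\Upsilon_{n_0,i}(\rho)\,d\rho
\end{equation*}
(the integrals to $0$ converge thanks to the bound \eqref{dim_fourier_coefficents:9} on $\Upsilon_{n_0,i}$). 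Under the contradiction hypothesis $L=0$ one has $\sqrt{H(\la)}=o(\la^\gamma)$ as $\la\to 0^+$, hence along $\la=\la_{p_q}$ the relation $\varphi_{n_0,i}(\la_{p_q})=(\beta_i+o(1))\sqrt{H(\la_{p_q})}=o(\la_{p_q}^\gamma)$ combined with the asymptotic above forces $R_{n_0,i}=0$ for every $i\in\{1,\dots,M_{\alpha,k,n_0}\}$.

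Finally, with all $R_{n_0,i}=0$, Proposition \ref{prop_fourier_coefficents} sharpens to $\varphi_{n_0,i}(\la)=O(\la^{\gamma+\e})$ as $\la\to 0^+$. Since $\beta_{i_*}\neq 0$, we recover
\begin{equation*}
\sqrt{H(\la_{p_q})}=\frac{\varphi_{n_0,i_*}(\la_{p_q})}{\beta_{i_*}+o(1)}=O(\la_{p_q}^{\gamma+\e}),
\end{equation*}
so $H(\la_{p_q})\le C\la_{p_q}^{2\gamma+2\e}$ for $q$ large. But \eqref{ineq_H_lower_estimate} applied with $\sigma=\e$ gives $H(\la_{p_q})\ge K_\e\la_{p_q}^{2\gamma+\e}$, so $K_\e\le C\la_{p_q}^{\e}\to 0$, a contradiction. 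The main technical delicacy lies in step three: one must be sure that the finite limits $R_{n_0,i}$ are well defined (i.e.\ that the improper integrals converge absolutely), which is exactly where the $O(\la^{N-2s+\gamma+\e})$ bound \eqref{dim_fourier_coefficents:9} on $\Upsilon_{n_0,i}$ from the proof of Proposition \ref{prop_fourier_coefficents} is used; once this is in hand, the mismatch between the polynomial decay predicted by the Fourier analysis with vanishing coefficients and the unconditional lower bound of Proposition \ref{prop_ineq_H} closes the argument.
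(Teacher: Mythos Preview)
Your proof is correct and follows essentially the same strategy as the paper: assume $\lim_{r\to 0^+}r^{-2\gamma}H(r)=0$, use Parseval together with the asymptotic expansion of Proposition~\ref{prop_fourier_coefficents} to force the leading coefficients $R_{n_0,i}$ to vanish, hence $\varphi_{n_0,i}(\la)=O(\la^{\gamma+\e})$, and then combine this with the blow-up limit and the lower bound \eqref{ineq_H_lower_estimate} to reach a contradiction. The only cosmetic differences are that the paper obtains $R_{n_0,i}=0$ directly from $|\varphi_{n_0,i}(\la)|^2\le H(\la)=o(\la^{2\gamma})$ (rather than via the blow-up subsequence), and in the final step the paper divides by $\sqrt{H(\la)}$ using \eqref{ineq_H_lower_estimate} to show $\int_{\mb{S}^+}\theta_{N+1}^{1-2s}V^\la Z\,dS\to 0$ for every $Z\in V_{\alpha,k,n_0}$, whereas you instead bound $\sqrt{H(\la_{p_q})}$ from above via $\varphi_{n_0,i_*}(\la_{p_q})/(\beta_{i_*}+o(1))$ and compare with \eqref{ineq_H_lower_estimate}; these are equivalent rearrangements of the same inequality.
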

\begin{proof}
From \eqref{def_fourier_coefficents}, since $\{Z_{\alpha,k,n}\}_{n \in \mathbb{N}\setminus\{0\}}$  is a orthonormal basis of $L^2(\mb{S}^+,\theta_{N+1}^{1-2s})$, see Subsection \ref{subsec_an_eigenvalue_problem_on_S+}, we have that
\begin{equation}\label{dim_limit_H_positive:1}
H(\la)=\int_{\mb{S}^+}\theta_{N+1}^{1-2s}|U(\la \theta)|^2 \, dS=\sum_{n=1}^\infty \sum_{i=1}^{M_{\alpha,k,n}} |\varphi_{n,i}(\la)|^2
\end{equation}
by \eqref{def_H} and a change of variables.  We argue by contradiction supposing that 
\begin{equation}\label{dim_limit_H_positive:2}
\lim_{\la \to 0^+}\la^{-2\gamma}H(\la)=0.
\end{equation} 
Let $n_0$ be as in \eqref{eq_gamma_limit_eigenvalue}. By \eqref{dim_limit_H_positive:1} for any $i\in\{1,\dots,M_{\alpha,k,n_0}\}$, 
\begin{equation}\label{dim_limit_H_positive:3}
	\lim_{\la \to 0^+}\la^{-2\gamma} |\varphi_{n_0,i}(\la)|^2=0.
\end{equation} 
By \eqref{eq_asymptotic_fourier}, for any $i\in\{1,\dots,M_{\alpha,k,n_0}\}$ and any $r \in (0,r_0]$
\begin{multline}\label{dim_limit_H_positive:4}
\frac{\varphi_{n,i}(r)}{r^{\gamma}}+\frac{\gamma r^{-N+2s-2\gamma}}{N-2s+2\gamma}\int_{0}^{r}\rho^{-1+\rho}\Upsilon_{n_0,i}(\rho) d\rho\\
+\frac{N-2s+\gamma}{N-2s+2 \gamma}\int_{0}^{r}\rho^{-N-1+2s-\gamma}\Upsilon_{n,i}(\rho )\, d\rho=0.
\end{multline}
Hence by \eqref{eq_asymptotic_fourier}, \eqref{dim_fourier_coefficents:9} and \eqref{dim_limit_H_positive:4}
\begin{equation}\label{dim_limit_H_positive:5}
\varphi_{n,i}(\la)=-\la^\gamma\frac{N-2s+\gamma}{N-2s+2 \gamma}\int_{0}^{\la}\rho^{-N-1+2s-\gamma}\Upsilon_{n,i}(\rho )\, d\rho+O(\la^{\gamma +\e})=O(\la^{\gamma +\e})
\end{equation}
as $\la \to 0^+$ for any $i\in\{1,\dots,M_{\alpha,k,n_0}\}$. In view of  \eqref{def_H} and \eqref{def_blow_up_solution}, it follows that 
\begin{equation}\label{dim_limit_H_positive:6}
\sqrt{H(\la)}\int_{\mathbb{S}^+} \theta_{N+1}^{1-2s} V^\la Z \, dS=O(\la^{\gamma +\e}) \quad \text{ as } \la \to 0^+,
\end{equation}
for  any $Z \in V_{n_0}$, see \eqref{def_eigenspace}. Then, in view of  \eqref{ineq_H_lower_estimate}  with $\sigma=\frac{\e}{2}$,
\begin{equation}\label{dim_limit_H_positive:7}
\int_{\mathbb{S}^+} \theta_{N+1}^{1-2s} V^\la Z \, dS=O(\la^{\frac{\e}{2}}) \quad \text{ as } \la \to 0^+
\end{equation}
for  any $Z \in V_{n_0}$.
On the other hand by Proposition \ref{prop_blow_up} and Proposition \ref{prop_trace_Sr+}, there exist $Z_0\in  V_{n_0}$ with $\norm{Z_0}_{L^2(\mb{S}^+,\theta_{N+1}^{1-2s})}=1$ and a sequence $\la_q \to 0^+$ as $q \to \infty$ such that 
\begin{equation}\label{dim_limit_H_positive:8}
\quad V^{\la_q} \to Z_0 \quad \text{ strongly in } L^2(\mb{S}^+,\theta_{N+1}^{1-2s}) \text{ as } q \to \infty.
\end{equation}
Since $Z_0 \in V_{n_0}$,  from the Parseval identity, \eqref{dim_limit_H_positive:7}, and \eqref{dim_limit_H_positive:8} we deduce that $Z_0\equiv 0$ which contradicts the fact that $\norm{Z_0}_{L^2(\mb{S}^+,\theta_{N+1}^{1-2s})}=1$.
\end{proof}

We are now in position to state and prove our main results which are a more precise version of Theorem \ref{theorem_blow_up_extension_not_precise} and Theorem \ref{theorem_blow_up_not_precise} respectively.

\begin{theorem}\label{theorem_blow_up_extension_precise}
Let $U$ be a solution of \eqref{eq_extension_with_equation} and suppose that $g$ satisfies \eqref{hp_g}. Then there exists $n \in \mathbb{N} \setminus\{0\}$ such that 
\begin{equation}\label{eq_gamma_limit_eigenvalue_theorem}
\gamma=\lim_{r \to 0^+} \mathcal{N}(r)=-\frac{N-2s}{2}+\sqrt{\left(\frac{N-2s}{2}\right)^2+\gamma_{\alpha,k,n}}.
\end{equation}
Furthermore let $M_{\alpha,k,n}$ and $\{Z_{\alpha,k,n,i}\}_{i \in \{1,\dots.M_{\alpha,k,n}\}}$ be as in \eqref{def_dimension-eigenspace} and \eqref{def_basis_eigenfunction} respectively. Then for any $i \in \{0,\dots.M_{\alpha,k,n}\}$ there exists $\beta_i\in \R$ such that 
$(\beta_1,\dots,\beta_{M_{\alpha,k,n}}) \neq (0,\dots,0)$ and 
\begin{equation}\label{eq_limit_extension_blow_up_precise}
\frac{U(\la z)}{\la^\gamma} \to |z|^\gamma\sum_{i=1}^{M_{\alpha,k,n}} \beta_i Z_{\alpha,k,n,i}(z/|z|) \quad \text{ strongly in } H^1(B_1^+,y^{1-2s}) \text { as } \la \to 0^+,
\end{equation}
where 
\begin{multline}\label{def_betai}
\beta_i:=\frac{\varphi_{n,i}(r)}{r^{\gamma}}+\frac{\gamma r^{-N+2s-2\gamma}}{N-2s+2\gamma}\int_{0}^{r}\rho^{-1+\rho}\Upsilon_{n,i}(\rho) d\rho\\
+\frac{N-2s+\gamma}{N-2s+2 \gamma}\int_{0}^{r}\rho^{-N-1+2s-\gamma}\Upsilon_{n,i}(\rho )\, d\rho \quad \text{ for any } r \in (0,r_0],
\end{multline}
with  $\varphi_{n,i}$  and $\Upsilon_{n,i}$ given by  \eqref{def_fourier_coefficents} and \eqref{def_fourier_coefficents_g} respectively.
\end{theorem}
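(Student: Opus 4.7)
The plan is to upgrade the subsequential blow-up convergence of Proposition \ref{prop_blow_up} to full convergence by identifying the limit through its Fourier coefficients with respect to the basis $\{Z_{\alpha,k,n,i}\}$ of the eigenspace $V_{\alpha,k,n}$, exploiting the asymptotic expansion of $\varphi_{n,i}(\lambda)$ provided by Proposition \ref{prop_fourier_coefficents}.

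First, I would combine Proposition \ref{prop_limit_H_exists_finite} with Proposition \ref{prop_limit_H_positive} to conclude that the limit $L:=\lim_{\lambda\to 0^+}\lambda^{-\gamma}\sqrt{H(\lambda)}$ exists and is strictly positive and finite. Hence, given any sequence $\lambda_p\to 0^+$, Proposition \ref{prop_blow_up} yields a subsequence $\lambda_{p_q}$ and an eigenfunction $Z\in V_{\alpha,k,n}$ (with $n$ as in \eqref{eq_gamma_limit_eigenvalue_theorem}, unit $L^2(\mathbb{S}^+,\theta_{N+1}^{1-2s})$-norm) such that
\[
\frac{U(\lambda_{p_q}z)}{\lambda_{p_q}^\gamma}=\frac{\sqrt{H(\lambda_{p_q})}}{\lambda_{p_q}^\gamma}\,V^{\lambda_{p_q}}(z)\;\longrightarrow\; L\,|z|^\gamma Z(z/|z|)
\qquad\text{strongly in }H^1(B_1^+,y^{1-2s}).
\]
Expanding $Z=\sum_{i=1}^{M_{\alpha,k,n}} c_i Z_{\alpha,k,n,i}$ in the orthonormal basis of $V_{\alpha,k,n}$, the compactness of the trace $\mathop{\rm Tr}_{S_1^+}$ (Proposition \ref{prop_trace_Sr+}) combined with the definition \eqref{def_fourier_coefficents} of $\varphi_{n,i}$ gives, for each $j$,
\[
\lim_{q\to\infty}\lambda_{p_q}^{-\gamma}\varphi_{n,j}(\lambda_{p_q})
=\int_{\mathbb{S}^+}\theta_{N+1}^{1-2s}\,L\,Z(\theta)\,Z_{\alpha,k,n,j}(\theta)\,dS
=L\,c_j.
\]

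Second, I would show this limit is subsequence-independent. The estimate \eqref{dim_fourier_coefficents:9} gives $|\Upsilon_{n,i}(\rho)|\le\text{const}\,\rho^{N-2s+\gamma+\varepsilon}$, so the integrand $\rho^{-N-1+2s-\gamma}\Upsilon_{n,i}(\rho)$ is controlled by $\rho^{-1+\varepsilon}$ and hence integrable near $0$; therefore
\[
\int_{\lambda}^{r}\rho^{-N-1+2s-\gamma}\Upsilon_{n,i}(\rho)\,d\rho
=\int_{0}^{r}\rho^{-N-1+2s-\gamma}\Upsilon_{n,i}(\rho)\,d\rho+O(\lambda^\varepsilon)
\quad\text{as }\lambda\to 0^+.
\]
Inserting this into the expansion \eqref{eq_asymptotic_fourier} of Proposition \ref{prop_fourier_coefficents} yields
$\varphi_{n,i}(\lambda)=\beta_i\lambda^\gamma+O(\lambda^{\gamma+\varepsilon})$, with $\beta_i$ precisely as in \eqref{def_betai}. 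Consequently $\lambda^{-\gamma}\varphi_{n,i}(\lambda)\to\beta_i$ along the full sequence $\lambda\to 0^+$, and matching with the previous identity forces $Lc_i=\beta_i$, independently of the chosen subsequence.

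Finally, since $L>0$ and $Z\not\equiv 0$, the vector $(\beta_1,\dots,\beta_{M_{\alpha,k,n}})$ is nonzero; and since every subsequence of $U(\lambda\cdot)/\lambda^\gamma$ admits a further subsequence converging strongly in $H^1(B_1^+,y^{1-2s})$ to the same limit $|z|^\gamma\sum_i\beta_i Z_{\alpha,k,n,i}(z/|z|)$, by a standard Urysohn-type argument the full sequence converges, establishing \eqref{eq_limit_extension_blow_up_precise}. The main technical obstacle is the justification that the potentially improper integral in \eqref{def_betai} is well defined and that the error in \eqref{eq_asymptotic_fourier} is genuinely $o(\lambda^\gamma)$, both of which are delivered by the decay estimate \eqref{dim_fourier_coefficents:9}; once this is in hand, the rest is bookkeeping with the orthonormal expansion and the strong $H^1$ convergence from Proposition \ref{prop_blow_up}.
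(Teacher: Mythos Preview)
Your proposal is correct and follows essentially the same route as the paper's proof: obtain subsequential convergence of $U(\lambda\cdot)/\lambda^\gamma$ from Proposition \ref{prop_blow_up} together with Propositions \ref{prop_limit_H_exists_finite} and \ref{prop_limit_H_positive}, identify the Fourier coefficients of the limit via the trace compactness and the expansion \eqref{eq_asymptotic_fourier}, and conclude full convergence by Urysohn. You are in fact slightly more explicit than the paper in two places---the role of $L>0$ in forcing $(\beta_1,\dots,\beta_{M_{\alpha,k,n}})\neq 0$, and the use of \eqref{dim_fourier_coefficents:9} to justify integrability and the $O(\lambda^{\gamma+\varepsilon})$ remainder---but the argument is the same.
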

\begin{proof}
In view of \eqref{limit_N} and Proposition \ref{prop_blow_up} we know that \eqref{eq_gamma_limit_eigenvalue_theorem} holds for some $n \in \mathbb{N}\setminus\{0\}$. Furthermore for any sequence of  strictly positive numbers $\la_p \to 0^+$  as $p \to \infty$ there exist a subsequence $\la_{p_q} \to 0^+$   as $q \to \infty$  and real numbers $\beta_1,\dots,\beta_{M_{\alpha,k,n}}$ such that 
\begin{equation}\label{dim_blow_up_extension_precise:1}
\frac{U(\la z)}{\la^\gamma} \to |z|^\gamma\sum_{i=1}^{M_{\alpha,k,n}} \beta_i Z_{\alpha,k,n,i}(z/|z|) \quad \text{ strongly in } H^1(B_1^+,y^{1-2s}) \text { as } q \to \infty^+,
\end{equation}
taking into account Proposition \ref{prop_blow_up} and \eqref{def_basis_eigenfunction}. We claim that for any $i \in \{1,\dots,M_{\alpha,k,n}\}$ the number $\beta_i$ does not depend neither 
on the sequence $\la_p \to 0^+$ nor on its subsequence $\la_{p_q}\to 0^+$.
In view of \eqref{def_basis_eigenfunction}, \eqref{def_fourier_coefficents}, \eqref{dim_blow_up_extension_precise:1} and Proposition \ref{prop_trace_Sr+}
\begin{multline}\label{dim_blow_up_extension_precise:2}
\lim_{q \to \infty}\la_{p_q}^{-\gamma}\varphi_{n,j}(\la_{p_q})=\lim_{q \to \infty}\int_{\mb{S}^+} \theta_{N+1}^{1-2s} \la_{p_q}^{-\gamma}U(\la_{p_q}\theta) Z_{\alpha,k,n,j}(\theta) \, dS\\
=\sum_{i=1}^{M_{\alpha,k,n}} \beta_i\int_{\mb{S}^+} \theta_{N+1}^{1-2s} Z_{\alpha,k,n,i} Z_{\alpha,k,n,j}\, dS =\beta_j,
\end{multline}
for any $j \in  \{1,\dots,M_{\alpha,k,n}\}$.
On the other hand for any $r \in (0,r_0]$
\begin{multline}\label{dim_blow_up_extension_precise:3}
\lim_{q \to \infty}\la_{p_q}^{-\gamma}\varphi_{n,j}(\la_{p_q})=\frac{\varphi_{n,j}(r)}{r^{\gamma}}+\frac{\gamma r^{-N+2s-2\gamma}}{N-2s+2\gamma}\int_{0}^{r}\rho^{-1+\rho}\Upsilon_{n,j}(\rho) d\rho\\
+\frac{N-2s+\gamma}{N-2s+2 \gamma}\int_{0}^{r}\rho^{-N-1+2s-\gamma} \Upsilon_{n,j}(\rho )\, d\rho
\end{multline}
by \eqref{eq_asymptotic_fourier}. Hence 
\begin{multline}\label{dim_blow_up_extension_precise:4}
\beta_j=\frac{\varphi_{n,j}(r)}{r^{\gamma}}+\frac{\gamma r^{-N+2s-2\gamma}}{N-2s+2\gamma}\int_{0}^{r}\rho^{-1+\rho}\Upsilon_{n,j}(\rho) d\rho\\
+\frac{N-2s+\gamma}{N-2s+2 \gamma}\int_{0}^{r}\rho^{-N-1+2s-\gamma} \Upsilon_{n,j}(\rho )\, d\rho
\end{multline}
for any $j \in  \{1,\dots,M_{\alpha,k,n}\}$ and in particular $\beta_j$ does not depend  neither 
on the sequence $\la_p \to 0^+$ nor on its subsequence $\la_{p_q}\to 0^+$. Then by \eqref{dim_blow_up_extension_precise:4} and the Urysohn Subsequence Principle we conclude that \eqref{eq_limit_extension_blow_up_precise} holds, thus completing the proof.
\end{proof}

From Theorem \ref{theorem_blow_up_extension_precise}, Proposition \ref{prop_trace}  and Remark \ref{remark_trace} we can easily deduce the following theorem.

\begin{theorem}\label{theorem_blow_up_precise}
Let $u$ be a solution of \eqref{eq_weak_formulation_hardy_operator} and suppose that $g$ satisfies \eqref{hp_g}. Let $\gamma$, $n \in \mathbb{N} \setminus\{0\}$, $M_{\alpha,k,n}$ and $\{Z_{\alpha,k,n,i}\}_{i \in \{1,\dots.M_{\alpha,k,n}\}}$ be as in Theorem \ref{theorem_blow_up_extension_precise}. Then 
\begin{equation}\label{eq_limit_blow_up_precise}
\frac{u(\la x)}{\la^\gamma} \to |x|^\gamma\sum_{i=1}^{M_{\alpha,k,n}} \beta_i \Tr(Z_{\alpha,k,n,i}((\cdot/|\cdot|))(x) \quad \text{ strongly in } H^s(B_1') \text { as } \la \to 0^+,
\end{equation}
where $\beta_i$ is as in \eqref{def_betai} for any $i \in \{1,\cdots,M_{\alpha,k,n}\}$.
\end{theorem}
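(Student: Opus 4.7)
The plan is to deduce this result directly from its extension counterpart, Theorem \ref{theorem_blow_up_extension_precise}, by applying a continuous trace operator.

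First, given the solution $u \in \mb{H}^s(\Omega)$ of \eqref{eq_weak_formulation_hardy_operator}, I invoke Corollary \ref{corollary_extension} to obtain its unique extension $U \in H^1_{0,L}(C,y^{1-2s})$ solving \eqref{eq_extension_with_equation}, with $\Tr(U)=u$. Applying Theorem \ref{theorem_blow_up_extension_precise} to $U$ supplies the eigenvalue index $n$, the homogeneity exponent $\gamma$, and the coefficients $\beta_1,\dots,\beta_{M_{\alpha,k,n}}$ (defined by \eqref{def_betai}) such that
\begin{equation*}
V^\la(z):=\frac{U(\la z)}{\la^\gamma} \longrightarrow W(z):= |z|^\gamma\sum_{i=1}^{M_{\alpha,k,n}} \beta_i Z_{\alpha,k,n,i}(z/|z|)
\end{equation*}
strongly in $H^1(B_1^+,y^{1-2s})$ as $\la \to 0^+$.

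Next I invoke Remark \ref{remark_trace}, which yields a linear continuous trace operator $\Tr\colon H^1(B_1^+,y^{1-2s}) \to H^s(B_1')$. Applying it to the strong convergence above immediately gives $\Tr(V^\la) \to \Tr(W)$ strongly in $H^s(B_1')$. It then remains only to identify both traces. For the rescaled function: since the dilation $z \mapsto \la z$ preserves the hyperplane $\{y=0\}$, a density argument (from smooth functions where the identity is trivial) together with continuity of $\Tr$ shows that $\Tr(V^\la)(x) = \la^{-\gamma}\Tr(U)(\la x) = \la^{-\gamma} u(\la x)$. For the limit, an analogous density argument combined with the fact that $|z|=|x|$ on $\{y=0\}$ and $z/|z|\in \mb{S}'$ there, yields $\Tr(W)(x) = |x|^\gamma \sum_i \beta_i \Tr(Z_{\alpha,k,n,i}(\cdot/|\cdot|))(x)$.

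Combining these identifications produces exactly \eqref{eq_limit_blow_up_precise}. There is no real obstacle here: the statement follows almost verbatim by post-composing the already-established $H^1$ convergence with the trace operator. The only point requiring a line of justification is that the trace commutes with the radial dilation (equivalently, that $\Tr(V(\la\cdot)) = \Tr(V)(\la\cdot)$ for $V \in H^1(B_{1/\la}^+,y^{1-2s})$), which is verified first for smooth $V$ by pointwise restriction and then extended by density and continuity of $\Tr$.
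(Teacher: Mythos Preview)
Your proof is correct and follows essentially the same approach as the paper, which simply states that the result follows from Theorem~\ref{theorem_blow_up_extension_precise}, Proposition~\ref{prop_trace}, and Remark~\ref{remark_trace}. You have in fact supplied more detail than the paper does, in particular the observation that the trace commutes with dilations, which is exactly the small technical point needed to complete the argument.
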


\begin{proof}[\textbf{Proof of Corollary \ref{corollary_unique_continuation_extention} and Corollary \ref{corollary_unique_continuation}}]
We start by proving Corollary \ref{corollary_unique_continuation_extention}. Let $U$ be a solution of \eqref{eq_extension_with_equation} such that \eqref{eq_corollary_unique_continuation_extention} 
holds and assume by contradiction that $U\not\equiv 0$ on $\Omega \times (0,\infty)$. Let $\gamma$ be as in Theorem \ref{theorem_blow_up_extension_precise}. Then there exists a sequence $\la_q\to 0^+$ 
such that 
\begin{equation}\label{dim_corollaries:1}
\lim_{q \to \infty}\la_q^{-\gamma} U(\la_q z)=0 \quad \text{ for a.e } z \in B_1^+.
\end{equation}
On the other hand by Theorem \ref{theorem_blow_up_extension_not_precise} there exists an eigenfunction $Z$ of \eqref{prob_eigenvalue_sphere} such that 
\begin{equation}\label{dim_corollaries:2} 
\lim_{q \to \infty}\la_q^{-\gamma} U(\la_q z)=|z|^{\gamma}Z(z/|z|) \quad \text{ for a.e. } z \in B_1^+,
\end{equation}
up to a further subsequence, which is a contradiction.
Arguing in the same way, we can deduce Corollary \ref{corollary_unique_continuation} from Theorem \ref{theorem_blow_up_not_precise}, taking into account Remark \ref{reamrk_eigenfunction0}.
\end{proof}

\section{Computation of the first eigenvalue on a hemisphere}\label{sec_computation_eigenvalue}

\begin{proposition} 
Equation   \eqref{eq_first_eigenvlaue} holds for any $k \in \{3, \dots, N\}$. If $k=N$ then \eqref{eq_first_eigenvlaue_k=N} holds.
\end{proposition}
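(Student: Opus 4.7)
My plan is to prove \eqref{eq_first_eigenvlaue} by explicitly constructing the first eigenfunction of problem \eqref{prob_eigenvalue_sphere} out of the first eigenfunction of \eqref{prob_eigenvalue_S'}, using the standard correspondence between eigenfunctions of the hemispherical problem and homogeneous solutions of the weighted extension equation in $\R^{N+1}_+$.

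The starting observation is the separation of variables already implicit in Remark \ref{reamrk_eigenfunction0}: a function of the form $V(z)=|z|^\gamma Z(z/|z|)$ satisfies the bulk equation $-\dive(y^{1-2s}\nabla V)-y^{1-2s}\frac{\alpha}{|x|_k^2}V=0$ in $\R^{N+1}_+$ together with $-\lim_{y\to 0^+}y^{1-2s}V_y=0$ on $\R^N\times\{0\}$ if and only if $Z$ is an eigenfunction of \eqref{prob_eigenvalue_sphere} with eigenvalue $\gamma(\gamma+N-2s)$. This follows by writing $\Delta_{\R^{N+1}}V=r^{\gamma-2}[\gamma(\gamma+N-1)Z+\Delta_{\mb S}Z]$, computing $(1-2s)y^{-2s}V_y$ in polar coordinates, and identifying $\nabla_{\mb S}\theta_{N+1}\cdot\nabla_{\mb S}Z=e_{N+1}\cdot\nabla_{\mb S}Z$ (since $\nabla_{\mb S}Z$ is tangent to $\mb S$). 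Hence, to compute $\gamma_{\alpha,k,1}$, it suffices to exhibit a positive separable solution $V$ to the bulk equation and read off its degree of homogeneity.

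The construction is as follows. Let $\Psi$ denote the positive first eigenfunction of \eqref{prob_eigenvalue_S'} corresponding to $\eta_{\alpha,k,1}$, and set
\begin{equation*}
\sigma:=-\frac{N-2}{2}+\sqrt{\Bigl(\tfrac{N-2}{2}\Bigr)^2+\eta_{\alpha,k,1}},
\end{equation*}
so that $\sigma(\sigma+N-2)=\eta_{\alpha,k,1}$ and, by an analogue of Proposition \ref{prop_ineq_hardy_sphere} applied on $\mb{S}'$, $\sigma>-(N-2)/2$. A direct calculation using the polar form $\Delta=\p_{\rho}^2+\frac{N-1}{\rho}\p_\rho+\frac{1}{\rho^2}\Delta_{\mb S'}$ shows that $\Phi(x):=|x|^\sigma\Psi(x/|x|)$ solves $-\Delta\Phi=\frac{\alpha}{|x|_k^2}\Phi$ on $\R^N\setminus\Sigma_k$. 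Now I take $V(x,y):=\Phi(x)$, independent of $y$: then $V_y\equiv0$, the Neumann-type boundary condition on $\R^N\times\{0\}$ is trivially satisfied, and $-\dive(y^{1-2s}\nabla V)=-y^{1-2s}\Delta_x\Phi=y^{1-2s}\frac{\alpha}{|x|_k^2}V$. Since $V$ is $\sigma$-homogeneous, we are in the setting of the first paragraph with $\gamma=\sigma$, and $Z(\theta):=V(\theta)=|\theta'|^\sigma\Psi(\theta'/|\theta'|)$ is an eigenfunction of \eqref{prob_eigenvalue_sphere} with eigenvalue
\begin{equation*}
\sigma(\sigma+N-2s)=\sigma(\sigma+N-2)+2(1-s)\sigma=\eta_{\alpha,k,1}+2(1-s)\Bigl[\sqrt{(\tfrac{N-2}{2})^2+\eta_{\alpha,k,1}}-\tfrac{N-2}{2}\Bigr].
\end{equation*}
I also need to verify $Z\in H^1(\mb{S}^+,\theta_{N+1}^{1-2s})$; near the north pole $|\theta'|\sim r$ and the local integrand behaves like $r^{N-3+2\sigma}$, so $\sigma>-(N-2)/2$ guarantees admissibility, while integrability up to $\mb S'$ is immediate since $Z$ is independent of $\theta_{N+1}$.

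It remains to argue that this eigenvalue is the \emph{first} one. Since $\Psi>0$ on $\mb{S}'$ (away from $\Sigma_k$, a set of measure zero) and $|\theta'|^\sigma>0$ on $\mb S^+$ away from $e_{N+1}$, the constructed $Z$ is strictly positive a.e.\ on $\mb{S}^+$. Any eigenfunction corresponding to $\gamma_{\alpha,k,n}$ with $n\ge 2$ must be $L^2(\mb S^+,\theta_{N+1}^{1-2s})$-orthogonal to the first eigenspace and therefore must change sign; hence a non-trivial non-negative eigenfunction necessarily belongs to the first eigenspace, and \eqref{eq_first_eigenvlaue} follows. The case $k=N$ then amounts to the observation that $|\theta'|_N=|\theta'|\equiv 1$ on $\mb S'$, so \eqref{prob_eigenvalue_S'} becomes $-\Delta_{\mb{S}'}\Psi-\alpha\Psi=\eta\Psi$; its first eigenvalue is attained by constants and equals $\eta_{\alpha,N,1}=-\alpha$, which substituted into \eqref{eq_first_eigenvlaue} yields \eqref{eq_first_eigenvlaue_k=N}. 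The main delicate point is the simplicity/positivity step: identifying $Z$ as the first eigenfunction rigorously through a Krein--Rutman style argument (using the compactness from Subsection \ref{subsec_an_eigenvalue_problem_on_S+}) rather than only as \emph{some} eigenfunction; the separation-of-variables computation itself is straightforward but must be carried out with care because of the weight $\theta_{N+1}^{1-2s}$ and the singular potential.
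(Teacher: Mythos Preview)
Your proof is correct and takes a genuinely different, more elementary route than the paper's. The paper proceeds indirectly: it applies its own blow-up machinery (Theorems \ref{theorem_blow_up_extension_precise} and \ref{theorem_blow_up_precise}) to the positive first Dirichlet eigenfunction $Y_{\alpha,k,1}$ of $L_{\alpha,k}$ on $\Omega$, obtaining an asymptotic profile governed by $\gamma_{\alpha,k,1}$; in parallel it quotes an external result \cite[Theorem 1.1]{FVF} giving the asymptotics of the same function in the local case, governed by $\eta_{\alpha,k,1}$; matching the two homogeneity exponents yields \eqref{dim_first_eigenvalue:4} and hence \eqref{eq_first_eigenvlaue}. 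Positivity of $Y_{\alpha,k,1}$ (and of its extension $U$) is what forces both blow-up limits to land on the respective \emph{first} eigenvalues.

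Your argument bypasses both the monotonicity/blow-up analysis and the external reference: you build the eigenfunction $Z(\theta)=|\theta'|^{\sigma}\Psi(\theta'/|\theta'|)$ by hand from the first eigenfunction $\Psi$ of \eqref{prob_eigenvalue_S'}, observe that the $y$-independent extension $V(x,y)=|x|^{\sigma}\Psi(x/|x|)$ trivially satisfies the weighted equation with vanishing Neumann data, and read off the eigenvalue $\sigma(\sigma+N-2s)$ from the homogeneity/separation-of-variables correspondence (this is \cite[Lemma 2.1]{FF}, already invoked in Remark \ref{reamrk_eigenfunction0}). The identification with $\gamma_{\alpha,k,1}$ then rests on the same positivity principle the paper uses, namely that a strictly positive eigenfunction must lie in the first eigenspace because any first eigenfunction can be taken nonnegative via $|Z_1|$ and cannot be $L^2(\mb S^+,\theta_{N+1}^{1-2s})$-orthogonal to a positive function. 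What your approach buys is self-containment and brevity; what the paper's approach buys is a nice illustration that the asymptotic theory developed in Sections \ref{sec_monotonicity}--\ref{sec_the_blow_up_analysis} already encodes \eqref{eq_first_eigenvlaue}. The one point you rightly flag as delicate---that the first eigenfunction of \eqref{prob_eigenvalue_sphere} can be chosen nonnegative---follows from the standard $Z\mapsto|Z|$ argument for the Rayleigh quotient associated to \eqref{eq_egienvalue_sphere}, and the integrability check near the north pole is exactly the condition $\sigma>-(N-2)/2$, i.e.\ $\eta_{\alpha,k,1}>-(\tfrac{N-2}{2})^2$, which is the $\mb S'$-analogue of Proposition \ref{prop_ineq_hardy_sphere}.
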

\begin{proof}
Let $Y_{\alpha,k,1}$ be the first eigenfunction of \eqref{prob_eigenvalue_omega} defined in Section \ref{sec_functional_setting_and_main_result}. In particular $Y_{\alpha,k,1}$ is positive.
By  \cite[Theorem 1.1]{FVF} there exists  an eigenfunction  $\Psi$  of problem \eqref{prob_eigenvalue_S'}, corresponding to the first eigenvalue  $\eta_{\alpha,k,1}$, such that 
\begin{equation}\label{dim_first_eigenvalue:1}
\la^{\frac{N-2}{2}-\sqrt{\left(\frac{N-2}{2}\right)^2+\eta_{\alpha,k,1}}}Y_{\alpha,k,1}(\la x) \to |x|^{-\frac{N-2}{2}+\sqrt{\left(\frac{N-2}{2}\right)^2+\eta_{\alpha,k,1}}} 
\Psi\left(\frac{x}{|x|}\right)
\end{equation}
strongly in $H^1(B_1')$ as $\la \to 0^+$, since  $Y_{\alpha,k,1}$ is positive.
Furthermore for any $\phi \in C^\infty_c(\Omega)$
\begin{equation}\label{dim_first_eigenvalue:2}
\df{(\mb{H}_{\alpha,k}^s(\Omega))^*}{L_{\alpha,k}^s Y_{\alpha,k,1}}{\phi}{\mb{H}_{\alpha,k}^s(\Omega)}=
(Y_{\alpha,k,1},\phi)_{\mb{H}_{\alpha,k}^s(\Omega)}= \mu_{\alpha,k,1}^s \int_{\Omega}Y_{\alpha,k,1} \phi \, dx,
\end{equation}
in view of \eqref{fract-lapla-domain-scalar-product}, that  is $Y_{\alpha,k,1}$ is weak solution of $L_{\alpha,k}^s Y_{\alpha,k,1}= \mu_{\alpha,k,1}^sY_{\alpha,k,1}$ 
in the sense given by \eqref{eq_weak_formulation_hardy_operator}. Let $U$ be the extension of $Y_{\alpha,k,1}$ provided by Theorem \ref{theorem_extension}. Since $Y_{\alpha,k,1}$ is positive then $|U|$ 
is the only  solution to the minimization problem \eqref{def_min_prob} and so we conclude that $U$ is positive. Then, in view of by Theorem \ref{theorem_blow_up_extension_precise} and Theorem
\ref{theorem_blow_up_precise},  
\begin{equation}\label{dim_first_eigenvalue:3}
\la^{\frac{N-2s}{2}-\sqrt{\left(\frac{N-2s}{2}\right)^2+\gamma_{\alpha,k,1}}}Y_{\alpha,k,1}(\la x)\to |x|^{-\frac{N-2s}{2}+\sqrt{\left(\frac{N-2s}{2}\right)^2+\gamma_{\alpha,k,1}}} 
\beta_1 \Tr(Z_{\alpha,k,1}((\cdot/|\cdot|))(x) 
\end{equation}
strongly in $H^s(B_1')$ as $\la \to 0^+$. Putting together \eqref{dim_first_eigenvalue:1} and \eqref{dim_first_eigenvalue:3} we obtain 
\begin{equation}\label{dim_first_eigenvalue:4}
-\frac{N-2s}{2}+\sqrt{\left(\frac{N-2s}{2}\right)^2+\gamma_{\alpha,k,1}}=-\frac{N-2}{2}+\sqrt{\left(\frac{N-2}{2}\right)^2+\eta_{\alpha,k,1}}
\end{equation}
thus \eqref{eq_first_eigenvlaue} follows from a direct computation. Finally, if $k=N$, problem \eqref{prob_eigenvalue_S'} reduces to 
\begin{equation}\label{dim_first_eigenvalue:5}
-\Delta_{\mb{S}'}\Psi-\alpha\Psi= \eta \Psi \quad \text{ in } \mb{S}'
\end{equation}
which admits $-\alpha$ as first eigenvalue, hence we have proved \eqref{eq_first_eigenvlaue_k=N} in view of \eqref{eq_first_eigenvlaue}.
\end{proof}

\bigskip\noindent{\bf Acknowledgements.} 
The author would like to thank Prof. Veronica Felli for helpful discussions and  insightful suggestions which helped improve the manuscript.

\appendix
\section{A proof of Proposition \ref{prop_Halphak_Hs}}\label{sec_appendix_A}
In this section we provide, for the sake of completeness, a detailed proof of Proposition \ref{prop_Halphak_Hs} starting with a preliminary lemma. 
Let us consider,  for  any positive sequence  $\{q_n\}_{n \in \mb{N}}$,
the weighted $\ell^2(\mb{N})$-space defined as  
\begin{equation}\label{def_weighted_l2}
\ell^2(\mb{N}, \{q_n\}):=\left\{\{a_n\}_{n \in \mb{N}}:\sum_{n=0}^\infty q_na_n^2<+\infty\right\}
\end{equation}
endowed with the norm 
\begin{equation}\label{def_weighted_l2_norm}
\norm{\{a_n\}}_{\ell^2(\mb{N}, \{q_n\})}:=\left(\sum_{n=0}^\infty q_na_n^2\right)^\frac{1}{2}.
\end{equation}

\begin{lemma}\label{lemma_interpolation_l2_weighted}
Let  $\ell^2(\mb{N}, \{q_n\})$ and $\ell^2(\mb{N}, \{p_n\})$ be weighted ${\ell}^2(\mb{N})$-spaces. Then 
\begin{equation}\label{eq_interpolation_l2_weighted}
(\ell^2(\mb{N}, \{q_n\}),\ell^2(\mb{N}, \{p_n\}))_{s,2}=\ell^2(\mb{N}, \{q_n^{1-s}p_n^s\}).
\end{equation}
with equivalent norms.
\end{lemma}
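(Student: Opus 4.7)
The plan is to use the K-method characterization of the real interpolation space $(X_0,X_1)_{s,2}$, exploiting the common diagonal structure of the two norms with respect to the canonical basis of $\ell^2(\mb{N})$.

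First, I would replace the usual K-functional by its quadratic variant
\[
K_2(t,a)^2 := \inf_{a = b + c} \bigl(\|b\|_{\ell^2(\mb{N},\{q_n\})}^2 + t^2\,\|c\|_{\ell^2(\mb{N},\{p_n\})}^2\bigr),
\]
which satisfies $K_2(t,a) \le K(t,a) \le \sqrt{2}\,K_2(t,a)$ by the inequalities $\sqrt{x^2+y^2} \le x+y \le \sqrt{2}\sqrt{x^2+y^2}$ and so yields an equivalent interpolation norm. Because the objective decouples into independent scalar problems for each index, a one-variable Lagrange multiplier computation on $q_n b_n^2 + t^2 p_n (a_n - b_n)^2$ produces the optimal split $b_n = \frac{t^2 p_n}{q_n + t^2 p_n}a_n$, leading to the closed form
\[
K_2(t,a)^2 = \sum_{n=0}^{\infty} \frac{t^2 p_n q_n}{q_n + t^2 p_n}\, a_n^2.
\]

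Next, I would insert this into
\[
\|a\|_{(X_0,X_1)_{s,2}}^2 \simeq \int_0^\infty t^{-2s-1}\, K_2(t,a)^2\, dt,
\]
exchange sum and integral via Fubini--Tonelli (all terms are nonnegative), and compute each $n$-th integral by the substitution $t = \sqrt{q_n/p_n}\, u$. This change of variables separates the dependence on $(p_n, q_n)$ from that on $s$: the $n$-th integral collapses to $q_n^{1-s} p_n^s / (q_n p_n)$ times the index-independent constant $C_s := \int_0^\infty u^{1-2s}/(1+u^2)\, du$, which is finite precisely because $s \in (0,1)$. Reassembling the sum gives $\|a\|_{(X_0,X_1)_{s,2}}^2 \simeq C_s \sum_n q_n^{1-s} p_n^s\, a_n^2$, which is the required identification of the two norms up to multiplicative constants.

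The only delicate point is the equivalence $K \simeq K_2$, which is classical for couples of Hilbert spaces, and the verification that $C_s$ is finite on the full range $s\in(0,1)$; the rest is routine scalar calculus, so I do not foresee a serious obstacle. A fully alternative path, worth mentioning for robustness, is to introduce the positive self-adjoint operator $M$ on $\ell^2(\mb{N},\{q_n\})$ with $(Ma)_n = (p_n/q_n)^{1/2} a_n$, whose graph norm is equivalent to the $\ell^2(\mb{N},\{p_n\})$ norm, and invoke the classical identification $(H, D(M))_{s,2} = D(M^s)$ from the spectral theorem; this bypasses the explicit integration at the cost of a heavier abstract input.
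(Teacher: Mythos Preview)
Your proposal is correct and follows essentially the same approach as the paper: the paper also replaces $K$ by the quadratic $K_2$, solves the coordinate-wise minimization to obtain $K_2(t,a)^2=\sum_n \frac{t^2 p_n q_n}{q_n+t^2 p_n}a_n^2$, swaps sum and integral, and uses the substitution $t=\tau\sqrt{q_n/p_n}$ to extract the finite constant $\int_0^\infty \tau^{1-2s}/(1+\tau^2)\,d\tau$. Your alternative via $D(M^s)$ is not in the paper but is a legitimate shortcut.
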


\begin{proof}
We follow the proof of \cite[Lemma 23.1]{T}.
Let us consider a variant of the standard $K$ function defined as 
\begin{equation}\label{dim_interpolation_l2_weighted:1}
K_2(t,a):=\inf_{b+c=a}\left\{\left(\norm{b}^2_{\ell^2(\mb{N}, \{q_n\})}+t^2\norm{c}^2_{\ell^2(\mb{N}, \{p_n\})}\right)^\frac{1}{2}:b \in \ell^2(\mb{N}, \{q_n\}), c \in \ell^2(\mb{N}, \{p_n\})\right\},
\end{equation}
for any $t\ge 0$ and any sequence $a \in \ell^2(\mb{N}, \{q_n\})+ \ell^2(\mb{N}, \{p_n\})$. If $K(t,a)$ is the standard $K$-function it is clear that 
$K_2(t,a) \le K(t,a) \le \sqrt{2} K_2(t,a)$ for any  $t\ge 0$ and any sequence $a \in \ell^2(\mb{N}, \{q_n\})+ \ell^2(\mb{N}, \{p_n\})$. It follows that  we can use $K_2$ to define a  norm on 
$(\ell^2(\mb{N}, \{q_n\}),\ell^2(\mb{N}, \{p_n\}))_{s,2}$ equivalent to the standard one. 

We can compute $K_2(a,t)$ explicitly. Indeed, fixed $a \in \ell^2(\mb{N}, \{q_n\})+ \ell^2(\mb{N}, \{p_n\})$  and $t \ge 0$, we can, for any $n \in  \mb{N}$, minimize the 
value of $b_n^2 q_n+t^2(a_n-b_n)^2 p_n$ as a function of $b_n$ choosing 
\begin{equation}\label{dim_interpolation_l2_weighted:2}
b_n:=\frac{t^2 p_n}{q_n+t^2p_n}a_n.
\end{equation}
With this optimal choice it follows that 
\begin{equation}\label{dim_interpolation_l2_weighted:3}
c_n=a_n-b_n=\frac{q_n}{q_n+t^2p_n}a_n
\end{equation}
and  so we obtain
\begin{equation}\label{dim_interpolation_l2_weighted:4}
K_2(t,a)^2=\sum_{n=0}^\infty \frac{t^2 p_nq_n}{q_n+t^2p_n}a_n^2.
\end{equation}
Then by the Monotone Convergence Theorem and the change of variables $t= \tau \sqrt{\frac{q_n}{p_n}}$
\begin{equation}\label{dim_interpolation_l2_weighted:5}
\int_{0}^{\infty}	K_2(t,a)^2 t^{-1-2s} \, dt =\sum_{n=0}^\infty a_n^2 \int_{0}^{\infty} \frac{t^{1-2s} p_nq_n}{q_n+t^2p_n} \, dt
=\left(\int_{0}^{\infty} \frac{\tau^{1-2s} }{1+\tau^2} \, d\tau\right)\sum_{n=0}^\infty a_n^2 q_n^{1-s}p_n^s.
\end{equation}
Since for any $ s \in (0,1)$
\begin{equation}\label{dim_interpolation_l2_weighted:6}
\int_{0}^{\infty} \frac{\tau^{1-2s} }{1+\tau^2} \, d\tau < + \infty, 
\end{equation}
we conclude that \eqref{eq_interpolation_l2_weighted} holds.
\end{proof}

\begin{proof}[\textbf{Proof of Proposition \ref{prop_Halphak_Hs}}.]
Let us start by proving that for any $k \in \{3,\dots,N\}$ and $\alpha$ as in \eqref{hp_alpha} 
\begin{equation}\label{dim_Halphak_Hs:1}
\mb{H}^1_{\alpha,k}(\Omega):=\left\{v \in L^2(\Omega):	\sum_{n=1}^\infty\mu_{\alpha,k,n}v_n^2	<+\infty\right\}=H^1_0(\Omega),
\end{equation}
with equivalent norms. If $u \in  H^1_0(\Omega)$ then, in view of Remark \ref{remark_equivalence_norm},
\begin{equation}\label{dim_Halphak_Hs:2}
u=\sum_{n=1}^{\infty}\left(u, \frac{Y_{\alpha,k,n}}{\sqrt{\mu_{\alpha,k,n}}}\right)_{\alpha,k} \, \frac{Y_{\alpha,k,n}}{\sqrt{\mu_{\alpha,k,n}}}
\end{equation}
and so by the Parseval's identity, \eqref{eq_eigenvalue_omega}, \eqref{def_vn} and  Remark \ref{remark_equivalence_norm}
\begin{equation}\label{dim_Halphak_Hs:3}
+\infty >\norm{u}_{\alpha,k}^2=\sum_{n=1}^\infty \mu_{\alpha,k,n} u_n^2.
\end{equation}                                                                           
On the other hand if $u \in \mb{H}^1_{\alpha,k}(\Omega)$ let, in view of \eqref{eq_eigenvalue_omega},
\begin{equation}\label{dim_Halphak_Hs:4}
u^{(j)}:=\sum_{n=1}^{j}\left(u, \frac{Y_{\alpha,k,n}}{\sqrt{\mu_{\alpha,k,n}}}\right)_{\alpha,k}  \frac{Y_{\alpha,k,n}}{\sqrt{\mu_{\alpha,k,n}}}=\sum_{n=1}^{j} u_n Y_{\alpha,k,n}.
\end{equation}
For any $j \in \mb{N} \setminus \{0\}$ it is clear that $u^{(j)} \in H^1_0(\Omega)$ and if $j>i$  
\begin{equation}\label{dim_Halphak_Hs:5}
\norm{u^{(j)}-u^{(i)}}_{\alpha,k}^2=\sum_{n=i}^j \mu_{\alpha,k,n}u_n^2.
\end{equation}
It follows that  $\{u^{(j)}\}_{j \in \mb{N}\setminus\{0\}}$ converges to $u$ in $H^1_0(\Omega)$ by Remark \ref{remark_equivalence_norm}, and  \eqref{dim_Halphak_Hs:5}. In conclusion $u \in 
H^1_0(\Omega)$. From Remark \ref{remark_equivalence_norm}  and \eqref{dim_Halphak_Hs:3} we deduce that the norms on $H^1_0(\Omega)$ and $\mb{H}^1_{\alpha,k}(\Omega)$ are equivalent.

For any $s \in (0,1]$, since $L^2(\Omega)$ and $\mb{H}^s_{\alpha,k}(\Omega)$ are isomorphic to  $\ell^2(\mb{N})$ and $\ell^2(\mb{N}, \{\mu_{\alpha,k,n}^s\})$  
respectively, from Lemma \ref{lemma_interpolation_l2_weighted} and \eqref{dim_Halphak_Hs:1} it follows that 
\begin{equation}\label{dim_Halphak_Hs:6}
\mb{H}_{\alpha,k}^s(\Omega)=(L^2(\Omega),\mb{H}^1_{\alpha,k}(\Omega))_{s,2}=(L^2(\Omega),H^1_0(\Omega))_{s,2}=
\begin{cases}
H^s_0(\Omega), &\text{if } s \in(0,1)\setminus\{\frac{1}{2}\}, \\
H_{00}^{1/2}(\Omega), &\text{if } s =\frac{1}{2},
\end{cases}	 
\end{equation}
with equivalent norms. The last equality is a  classical interpolation result, see for example \cite{LM1}.
\end{proof}

\bibliographystyle{acm}
\bibliography{Fractional_power_of_Laplacian_with_Hardy_potential_revisited}	
\end{document}